\definecolor{bluecite}{HTML}{0875b7}
\newcommand{\ds}{\displaystyle}
\newcommand{\R}{{\mathbb R}}
\DeclareMathOperator{\Ent}{\mathbf{Ent}}
\newtheorem{proposition}{Proposition}[section]
\newtheorem{theorem}{Theorem}[section]
\newtheorem{corollary}{Corollary}[section]
\newtheorem{remark}{Remark}[section]
\numberwithin{equation}{section}
\author{Zolt\'an M. Balogh and  Alexandru Krist\'aly}	
\address{Mathematisches Institute,
	Universit\"at Bern,
	Sidlerstrasse 5,
	3012 Bern, Switzerland}
\email{zoltan.balogh@unibe.ch}
\address{Department of Economics, Babe\c s-Bolyai University, Str. Teodor Mihali 58-60, 400591 Cluj-Napoca,
	Romania \& Institute of Applied Mathematics, \'Obuda
	University, B\'ecsi \'ut 96/B, 1034
	Budapest, Hungary}
\email{alexandru.kristaly@ubbcluj.ro; kristaly.alexandru@uni-obuda.hu}
\subjclass[]{26D15; 35B35; 47J20; 34K20}
\keywords{Stability; hypercontractivity; logarithmic Sobolev inequality; Pr\'ekopa--Leindler inequality.}
\thanks{Z. M. Balogh is
	supported by the Swiss National Science Foundation, Grant Nr. {200021\_228012}. 
	\thanks{A.\ Krist\'aly is  supported by the
		Excellence Researcher Program \'OE-KP-2-2022 of \'Obuda University, Hungary.} 
}
\title[Sharp stability results in HC and LSI]{Sharp stability in hypercontractivity estimates and
	logarithmic Sobolev inequalities}
\date{} 
\dedicatory{To our friend and colleague, K\'aroly  B\"or\"oczky, with great appreciation}
\begin{document}
	\maketitle

	\begin{abstract}We  prove stability results in  hypercontractivity estimates for the Hopf--Lax semigroup in $\mathbb R^n$ and apply them to deduce stability results for the Euclidean $L^p$-logarithmic Sobolev inequality  for any $p>1$. As a main tool, we use recent stability results for the Pr\'ekopa--Leindler inequality,  due to B\"or\"oczky and De (2021), Figalli and Ramos (2024) and  Figalli, van Hintum, and Tiba (2025). Under mild assumptions on the functions, most of our stability results turn out to be sharp, as they are reflected in the optimal exponent $1/2$  both in the hypercontractivity and $L^p$-logarithmic Sobolev deficits, respectively.  This approach also works for establishing  stability of  Gaussian  hypercontractivity estimates and Gaussian logarithmic Sobolev inequality, respectively. 
	\end{abstract}
	
	\section{Introduction}
	
	An important problem in Geometric Analysis is the characterization of  equality cases	in various geometric and functional inequalities. An even more challenging question is the stability of these inequalities.  Here, we are interested to know, how far the set or the function is from the family of extremizers   (known from the equality case)  when we are close to the equality in the studied inequality. 
	This circle of problems received an increasing attention in the last two decades. In particular, the quantitative stability of the Brunn--Minkowski and isoperimetric inequalities has been studied by Figalli and Jerison \cite{FigalliJerison},   Fusco, Maggi and Pratelli \cite{FuscoMaggiPratelli},   Figalli,  Maggi and Pratelli \cite{FigalliMaggiPratelli} and Figalli, van Hintum and  Tiba \cite{FigallivanHintumTiba_23, FigallivanHintumTiba_24}. 
	
	The functional version of the Brunn--Minkowski inequality is the 
	\textit{Pr\'ekopa--Leindler inequality} (which is a particular form of the Borell--Brascamp--Lieb inequality),  stating that if $u, v, w: \mathbb{R}^n \rightarrow \mathbb{R}_{+}$ are integrable functions and $\lambda\in (0,1)$ such that
	$$
	w(\lambda x+(1-\lambda) y) \geq u(x)^\lambda v(y)^{1-\lambda} \ \ {\rm for\ every}\ \ x, y \in \mathbb{R}^n, 
	$$
	then  
	$$\ds\int_{\mathbb{R}^n} w\mathrm{d} x  \geq  \left(\int_{\mathbb{R}^n} u \mathrm{d} x \right)^\lambda \left(\int_{\mathbb{R}^n} v \mathrm{d} x \right)^{1-\lambda} .$$
	Equality holds in the latter inequality if and only if the three functions $u,v$ and $w$ (up to dilations and translations) are equal to a log-concave function . 
	The stability of the Pr\'ekopa--Leindler inequality (and also in Borell--Brascamp--Lieb inequality) has been investigated e.g.\ by Ball and   B\"or\"oczky \cite{BallBoroczky1, BallBoroczky2},  B\"or\"oczky,    Figalli and Ramos \cite{Boroczky-Figalli-Ramos}, 
 Bucur and   Fragal\`a \cite{BucurFragala}, Figalli, van Hintum and  Tiba \cite{FigallivanHintumTiba_25}, Figalli and Ramos \cite{FigalliRamos} and Rossi and  Salani \cite{RossiSalani}. In this approach one can reduce the  problem to the  stability of the Brunn--Minkowski inequality for level sets of the functions. In particular, the almost equality in the Pr\'ekopa--Leindler inequality shows that the three functions $u,v$ and $w$ (up to dilations and translations) are close in a some sense to a log-concave function.
	
	Extensive studies concern the stability of $L^p$-Sobolev-type inequalities as well. Starting from the seminal work of  Bianchi and  Egnell \cite{BianchiEgnell}, sharp quantitative results for the classical Sobolev inequality have been established e.g.\ by Deng,  Sun and Wei \cite{DSW}, Figalli and   Zhang \cite{FigalliRu},  Dolbeault,   Esteban, Figalli, Frank and Loss \cite{Dolbeault-et-al}. We note  that these results appear to be sharp only in the case $p=2$. 
	
	Closely related to this topic is the stability of logarithmic Sobolev inequalities, see e.g.\   Bez,  Nakamura and Tsuji \cite{BNT},  Brigati,  Dolbeault and Simonov \cite{BDS}, Dolbeault,   Esteban, Figalli, Frank and Loss \cite{Dolbeault-et-al, Dolbeault-et-al_short}, Fathi, Indrei and Ledoux \cite{FathiIndreiLedoux}, Feo,  Indrei,  Posteraro and  Roberto \cite{FeoIndreiPosteraroRoberto}, Indrei and Marcon \cite{Indrei-Marcon}, Suguro \cite{Takshi}.  We notice that these papers address the stability of $L^2$-logarithmic Sobolev inequalities, mostly with respect to the Gaussian measure.  
	
	In view of the aforementioned results, one of the main goals of this paper is %to extend the above results
	to establish stability results for the \textit{$L^p$-Euclidean logarithmic Sobolev inequality} for all $p>1$ (with the Euclidean instead of the Gaussian measure), which will be derived by \textit{hypercontractivity estimates} for the Hopf--Lax semigroup.
	In fact, the strategy is to follow the chain of implications:
	
	%	 
	%	 The main goals of this paper are %to extend the above results
	%	 to obtain stability results both for  the \textit{$L^p$-Euclidean logarithmic Sobolev inequality} for all $p>1$ (with the Euclidean instead of the Gaussian measure) and for the \textit{Gaussian logarithmic Sobolev inequality}, which will be derived by certain stability of \textit{hypercontractivity estimates}. 

	%	Our aim is to establish stability of the $L^p$-\textit{logarithmic Sobolev inequality} in $\mathbb R^n$ for every $p>1$. 
	
	%	\begin{mdframed}
		%		Test
		%	\end{mdframed}
	
	\begin{center}\label{Sys}
		\hspace{1cm}	\fbox{\parbox{3cm}{Stability in \\ Pr\'ekopa--Leindler inequality }} $\implies$ \fbox{\parbox{3.1cm}{Stability in \\ Hypercontractivity estimate}} $\implies$ \fbox{\parbox{3.5cm}{Stability in \\Logarithmic Sobolev inequality }} \hspace{2cm} \textbf{(S)}
	\end{center}
	
	%\vspace{0.1cm}
	%\begin{center}
	%	\fbox{\parbox{3cm}{Pr\'ekopa--Leindler inequality \&\\  stability}} $\implies$ \fbox{\parbox{3.1cm}{Hypercontractivity estimate \&\\  stability}} $\implies$ \fbox{\parbox{3.5cm}{Logarithmic Sobolev inequality \&\\  stability}} 
	%\end{center}
	
	To our knowledge, the first implication and the corresponding stability of the hypercontractivity estimate are new. Moreover, they hold under minimal assumption on the class of functions to be considered. Although results on  the stability of the  logarithmic Sobolev inequality are available, our method using the above chain of implications is different than the ones used up until now. In addition, it allows us to handle the class of $L^p$-Euclidean logarithmic Sobolev inequalities instead of the Gaussian logarithmic Sobolev inequalites that has been considered before with other methods.
	%We note, that for the second implication and the corresponding stability results of the $L^p$-Euclidean logarithmic Sobolev inequality additional regularity results have to be imposed. 

	We recall that the $L^p$-Euclidean logarithmic Sobolev inequality  and the hypercontractivity estimate  of the Hopf--Lax semigroup are equivalent, as shown in the works of    Gentil \cite{Gentil_JFA, Gentil_BullSci} and  Bobkov, Gentil and Ledoux \cite{BGL}. According to the above scheme \textbf{(S)},  the stability of the $L^p$-Euclidean logarithmic Sobolev inequality  will be derived by the stability of the hypercontractivity estimate for the Hopf--Lax semigroup. Therefore, we start with the main properties of the Hopf--Lax semigroup and state our stability results with respect to its hypercontractivity.

	%	It has been established by that the $L^p$-logarithmic Sobolev inequality  is in fact equivalent to the optimal control formulation of certain Hamilton--Jacobi equations. 
	%	More precisely,
	
	Given an (enough regular) function $g:\R^n\to \mathbb R$, let $(\mathbf{Q}_t)_{t \geq 0}$ be the family of operators defined by 
	$$ 
	\left\{
	\begin{array}{l}
		\mathbf{Q}_t g(x)=\inf _{y \in \R^n}\left\{g(y)+\frac{|x-y|^{p^{\prime}}}{p^{\prime} t^{p^{\prime}-1}}\right\}, \quad t>0, \quad x  \in \mathbb{R}^n, \\
		\mathbf{Q}_0 g(x)=g(x), \quad x  \in \mathbb{R}^n.
	\end{array}
	\right.
	$$
	%	It can be proven that under a certain regularity assumptions on $g$,  the function $u(x, t)=\mathbf{Q}_{t} g(x)$ is the viscosity solution to the   Hamilton--Jacobi initial value problem 
	%	$$
	%	\left\{
	%	\begin{array}{l}
		%		\frac{\partial u}{\partial t}(x, t) + \frac{|\nabla u(x, t)|^p}{p} =0, \quad t>0, \quad x  \in \mathbb{R}^n ,\\
		%		u(x, 0)=g(x), \quad x  \in \mathbb{R}^n,
		%	\end{array}\right.
	%	$$
	%	see Evans \cite{Evans}.
	The family of operators $(\mathbf{Q}_{t})_{t \geq 0}$ defines a nonlinear semigroup, called the Hopf--Lax semigroup, which, according to Gentil \cite{Gentil_JFA}, satisfies a sharp hypercontractivity estimate. 
	%	To state this result, for any fixed $t_0>0$, we consider the class of functions 
	%	$$\mathcal  F_{t_0}(\mathbb R^n)\coloneqq  \left\{ g:\mathbb R^n\to \mathbb R:\begin{array}{lll}
		%		g {\rm \ is\ measurable,\ bounded\ from\ above\ and}\  \\
		%		{\rm there\ exists}\ x_0\in \mathbb R^n {\rm \ such \ that} \ {\bf Q}_{t_0}g(x_0)>-\infty
		%	\end{array}\right\}.
	%	$$ 
	%	Then, due to Balogh, Don and Krist\'aly \cite[Proposition 5.1]{BDK}, considering this class of functions, one has that for every $t\in (0,t_0)$, ${\bf Q}_{t}g$ is well-defined, locally Lipschitz and satisfies the Hamilton-Jacobi equation almost everywhere. 
	More precisely, if  $0<\alpha< \beta$, then for any $t>0$ and $g:\mathbb R^n\to \mathbb R$ with the property that if $e^g\in L^\alpha(\mathbb R^n)$, then  $e^{\mathbf{Q}_t g}\in L^\beta(\mathbb R^n)$, and we have the estimate
	\begin{equation} \label{HCI_ineq}
		\left\|e^{\mathbf{Q}_{t} g}\right\|_{\beta} \leq  \mathcal{C}_{p, t, \alpha, \beta} \cdot \left\|e^g\right\|_{\alpha} ,
	\end{equation}
	where the constant 
	\begin{equation} \label{HCI_constant}
		\mathcal{C}_{p, t, \alpha, \beta} = \left(\frac{\beta-\alpha}{t}\right)^{\frac{n}{p} \frac{\beta-\alpha}{\alpha \beta}} \frac{\alpha^{\frac{n}{\alpha \beta}\left(\frac{\alpha}{p}+\frac{\beta}{p^{\prime}}\right)}}{\beta^{\frac{n}{\alpha \beta}\left(\frac{\beta}{p}+\frac{\alpha}{p^{\prime}}\right)}}\left(\left(p^{\prime}\right)^{\frac{n}{p^{\prime}}} \Gamma\left(\frac{n}{p^{\prime}}+1\right) \omega_n\right)^{\frac{\alpha-\beta}{\alpha \beta}}
	\end{equation}
	is optimal. Hereafter, $\omega_n$ stands for the volume of the unit ball in $\mathbb R^n$ and, for simplicity, $\|\cdot\|_\alpha$ denotes  the usual $L^\alpha$-norm in $\mathbb R^n$ with the standard  Lebesgue measure.  In addition, equality holds in \eqref{HCI_ineq} if and only if
	\begin{equation} \label{HCI_extremizers}
		g(x) = C - \left( \frac{\beta - \alpha}{\beta t} \right)^{p'-1}  \frac{\left| x - x_0 \right|^{p'}}{p'} , \ x\in \mathbb R^n,
		%\ \ {\rm for\ a.e.}\  x\in \mathbb R^n,
	\end{equation}
	for some $x_0 \in \R^n$ and $C \in \R$, see Balogh, Don and Krist\'aly \cite{BDK}.

	Let us define the \textit{hypercontractivity deficit} associated with the estimate \eqref{HCI_ineq} as
	\begin{equation}  \label{HCI_deficit}
		\delta^{\sf HC}_{p ,t,\alpha,\beta}(g) = \left(\mathcal{C}_{p, t, \alpha, \beta}\right)^\alpha \frac{
			\left\|e^g\right\|^\alpha_{\alpha} }{\left\|e^{\mathbf{Q}_{t} g}\right\|_{\beta}^\alpha} -1  \geq 0,
	\end{equation}
	where  $\mathcal{C}_{p, t, \alpha, \beta}$ is the optimal constant from \eqref{HCI_constant}. 
	
	We can now state our first main result, that is concerning the stability of the hypercontractivity inequality \eqref{HCI_ineq} under minimal (integrability) assumptions on the functions. 
	
	\begin{theorem}\label{main-theorem-HC} Let $n \geq 1$, $p>1$  and   $0<\alpha< \beta$. Then, for any $t>0$, there exists a constant $ C(n,p,  \alpha,\beta,t) >0$ with the following property. Given any function $g:\mathbb R^n\to \mathbb R$ such that $e^g\in L^\alpha(\mathbb R^n)$, 
		%and $e^{{\bf Q}_tg}\in L^\beta(\mathbb R^n)$,%
		there exists a point $x_0 \in \mathbb{R}^n$ such that
		\begin{equation}\label{stab-HC}
			\int_{\R^n}\Big|e^{-\theta \cdot \frac{|x-x_0|^{p^\prime}}{p^\prime}}- a^{-\frac{\alpha}{\beta}}  e^{\alpha g(x)}\Big| \mathrm{d} x  \leq C(n,p,  \alpha,\beta,t) \cdot   \delta^{\sf HC}_{p ,t,\alpha,\beta}(g)^\frac{1}{2} ,
		\end{equation}
		where 
		\begin{equation}\label{theta_and_a_definition}
			\theta=\alpha\left(\frac{\beta-\alpha}{\beta t}\right)^{p^{\prime}-1}\ \  and\ \ a =  \int_{\R^n} e^{\beta \mathbf{Q}_{t} g(x)} \mathrm{d} x / \int_{\R^n} e^{-\theta \left(\frac{\beta}{\alpha}\right)^{p'}\cdot \frac{|x|^{p^\prime}}{p^\prime}} \mathrm{d} x. 
		\end{equation}
		Moreover, the exponent $\frac{1}{2}$ of $\delta^{\sf HC}_{p ,t,\alpha,\beta}(g)$ in \eqref{stab-HC} is sharp. 
	\end{theorem}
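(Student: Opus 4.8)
The strategy is to identify the hypercontractivity deficit with an \emph{honest} Pr\'ekopa--Leindler deficit of an \emph{explicit} triple of functions, and then feed this into the sharp stability theory for the Pr\'ekopa--Leindler inequality. Put $\lambda=\alpha/\beta\in(0,1)$ and $c_{t}=\frac{1}{p't^{p'-1}}$, and consider on $\R^{n}$ the three nonnegative integrable functions
\[
u(x)=e^{\beta\mathbf{Q}_{t}g(x)},\qquad v(y)=e^{-\theta(\beta/\alpha)^{p'}\frac{|y|^{p'}}{p'}},\qquad w(\xi)=e^{\alpha g(\xi/\lambda)},
\]
with $\theta$ as in \eqref{theta_and_a_definition}; note that $v$ is exactly the Gaussian-type density in the denominator of $a$, so $a=\int_{\R^{n}}u\big/\int_{\R^{n}}v$. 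The crucial point is that the Pr\'ekopa--Leindler hypothesis $w(\lambda x+(1-\lambda)y)\ge u(x)^{\lambda}v(y)^{1-\lambda}$ holds: using $z=x+\frac{1-\lambda}{\lambda}y$ as a competitor in the infimum defining $\mathbf{Q}_{t}g(x)$ yields
\[
\mathbf{Q}_{t}g(x)\le g\!\left(x+\tfrac{1-\lambda}{\lambda}y\right)+c_{t}\Big(\tfrac{1-\lambda}{\lambda}\Big)^{p'}|y|^{p'},
\]
and since $\xi\mapsto\xi/\lambda$ sends $\lambda x+(1-\lambda)y$ to precisely $x+\frac{1-\lambda}{\lambda}y$ while $\lambda\beta=\alpha$, exponentiating and inserting the explicit $v$ gives the claim. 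A Gaussian computation of $\int v$, the change of variables $\int w=\lambda^{n}\|e^{g}\|_{\alpha}^{\alpha}$, and $\lambda\beta=\alpha$ then show that the Pr\'ekopa--Leindler inequality applied to $(u,v,w)$ reproduces \eqref{HCI_ineq} with the sharp constant \eqref{HCI_constant}, and, more to the point, that
\[
\delta^{\sf HC}_{p,t,\alpha,\beta}(g)=\frac{\int_{\R^{n}}w}{\Big(\int_{\R^{n}}u\Big)^{\lambda}\Big(\int_{\R^{n}}v\Big)^{1-\lambda}}-1 ,
\]
i.e.\ the hypercontractivity deficit \emph{coincides} with the Pr\'ekopa--Leindler deficit of this triple. (The assumption $e^{g}\in L^{\alpha}$ guarantees, via \eqref{HCI_ineq} itself, that all three integrals are finite; this one-step reduction, resting on the dilation and the infimal-convolution competitor, is to our knowledge the new ingredient, and it is what makes the stability transfer automatic.)

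Second, I would apply the sharp quantitative stability of the Pr\'ekopa--Leindler inequality (B\"or\"oczky--De, Figalli--Ramos \cite{FigalliRamos}, Figalli--van Hintum--Tiba \cite{FigallivanHintumTiba_25}): because the middle factor $v$ is \emph{fixed} and is a smooth, uniformly log-concave density already of the extremal $p'$-Gaussian type, a small deficit forces $w$ to be $L^{1}$-close, up to a translation, to a $p'$-Gaussian (the common extremal profile being pinned to this shape by the fixed factor $v$),
\[
\Big\|\,w-b\,e^{-\tau\frac{|\xi-\xi_{0}|^{p'}}{p'}}\,\Big\|_{L^{1}(\R^{n})}\le C'(n,p,\alpha,\beta,t)\,\delta^{\sf HC}_{p,t,\alpha,\beta}(g)^{1/2},
\]
for some $b,\tau>0$ and $\xi_{0}\in\R^{n}$, with the optimal power $\tfrac12$ (this is the regime of one fixed smooth uniformly-log-concave factor, where $\tfrac12$ is known to be sharp). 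It then remains to pin $(b,\tau,\xi_{0})$ to their extremal values: the Pr\'ekopa--Leindler equality-case relation among the three dilation factors, together with the fact that $u=e^{\beta\mathbf{Q}_{t}g}$ is the Hopf--Lax evolution of $w$, form a non-degenerate system whose only solution is the extremizer family \eqref{HCI_extremizers}, so $\tau=\theta(\beta/\alpha)^{p'}$ and $b$ is fixed by $a$, up to $O(\delta^{1/2})$ errors absorbed into the constant. Undoing $\xi=\lambda\eta$ turns $w(\xi)$ into $e^{\alpha g(\eta)}$ and the $p'$-Gaussian into $a^{-\alpha/\beta}e^{-\theta|\eta-x_{0}|^{p'}/p'}$ — and $a$ is precisely the normalization making $\int a^{-\alpha/\beta}e^{\alpha g}$ equal to $\big(1+\delta^{\sf HC}_{p,t,\alpha,\beta}(g)\big)\int e^{-\theta|\cdot|^{p'}/p'}$, which is what keeps the final constant dependent only on $n,p,\alpha,\beta,t$ — giving \eqref{stab-HC}.

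Third, for the sharpness of the exponent $\tfrac12$ I would test \eqref{stab-HC} on the family of \emph{dilated extremizers}: with $g_{*}$ as in \eqref{HCI_extremizers} (say $x_{0}=0$, $C=0$), set $g_{\varepsilon}(x)=g_{*}\big((1+\varepsilon)x\big)$. Then $e^{\alpha g_{\varepsilon}}$ is a $p'$-Gaussian of width $\theta(1+\varepsilon)^{p'}\neq\theta$, so the left-hand side of \eqref{stab-HC} is at least a positive multiple of $|\varepsilon|$ for \emph{every} admissible $x_{0}$ (a change of width cannot be undone by a translation or a multiplicative constant), whereas $\varepsilon\mapsto\delta^{\sf HC}_{p,t,\alpha,\beta}(g_{\varepsilon})$ vanishes at $\varepsilon=0$ together with its first derivative (the extremizers minimize the deficit, and the dilation direction is transverse to the translation/constant invariance of the extremal family), hence $\delta^{\sf HC}_{p,t,\alpha,\beta}(g_{\varepsilon})\le C_{*}\varepsilon^{2}$. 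Consequently \eqref{stab-HC} with any exponent exceeding $\tfrac12$ fails along this family.

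The main obstacle I anticipate lies in the second step: quoting Pr\'ekopa--Leindler stability in precisely the needed generality (arbitrary nonnegative integrable $u,w$, with the optimal rate $\delta^{1/2}$ when one factor is a fixed smooth uniformly-log-concave density), and then carefully transporting the abstract ``close to a dilate of a common log-concave profile'' conclusion into the explicit profile $e^{-\theta|\eta-x_{0}|^{p'}/p'}$ with the exact normalization $a^{-\alpha/\beta}$ of \eqref{theta_and_a_definition}, all the while keeping the constants dependent only on $n,p,\alpha,\beta,t$. By contrast the first step is short — it is entirely powered by the dilation $w=e^{\alpha g(\cdot/\lambda)}$ and the competitor $z=x+\frac{1-\lambda}{\lambda}y$ — and the sharpness in the third step is just the classical quadratic-versus-linear behaviour of a functional near a non-degenerate minimum.
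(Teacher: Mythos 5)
Your overall architecture is the same as the paper's: you encode the hypercontractivity deficit as a Pr\'ekopa--Leindler (PL) deficit of exactly the triple $u=e^{\beta\mathbf{Q}_tg}$, $v=e^{-\theta_0|\cdot|^{p'}/p'}$ (with $\theta_0=\theta(\beta/\alpha)^{p'}$), $w=e^{\alpha g(\cdot/\lambda)}$ with $\lambda=\alpha/\beta$, check the PL hypothesis via the competitor $y\mapsto x+\tfrac{1-\lambda}{\lambda}y$, identify the deficit, invoke PL stability, undo the dilation, and test sharpness on a one-parameter width perturbation of the extremizer. All of that matches the paper (the paper's test functions $g_\varepsilon=-((p')^{-p'}+\varepsilon)|x|^{p'}$ are asymptotically the same one-parameter family as your dilated extremizers), and the reduction to the PL deficit with the explicit factor $v$ is indeed the key one-step new ingredient, exactly as the paper presents it.

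There is, however, a genuine gap in your second step. The version of PL stability you cite (Figalli--van Hintum--Tiba, Corollary 1.7) yields the existence of a log-concave profile $l$ with $\int(|f-l|+|g-l|)\lesssim\sqrt\varepsilon\int f$ for the two \emph{input} functions $f,g$ (suitably normalized), and says nothing directly about the third (output) function $h=w$. To conclude anything about $w$ you need the extension the paper proves as its Theorem~\ref{Theorem_Figalli_etal}: set $k=\min\{f,g\}$, use $h\ge k$ together with $\int k\ge(1-O(\sqrt\varepsilon))\int f$ and $\int|k-l|\le O(\sqrt\varepsilon)\int k$ to get $\int|h-l|\lesssim\sqrt\varepsilon\int f$ as well, and then triangle through $l$ to compare $a^{-\lambda}w$ directly to a translate of the \emph{given} function $v$. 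Your proposal assumes that bound on $w$ without proving it, and the substitute you offer --- the ``non-degenerate system'' pinning of $(b,\tau,\xi_0)$ to the extremal values --- is not a proof: once you have the triangle inequality through $l$ comparing $a^{-\lambda}w$ to the explicit $v$ (translated), no pinning is needed at all, since $v$ already \emph{is} the $p'$-Gaussian with the right width; if instead you only know $w$ is close to \emph{some} log-concave profile, the argument that this profile must have the specific width $\theta_0$ and normalization $a^{-\lambda}$ up to controlled errors is missing. Finally, a minor omission: the quoted PL stability requires $\lambda\in(0,1/2]$, so the case $\alpha>\beta/2$ needs the symmetric argument (swapping the roles of $\lambda$ and $1-\lambda$), which the paper carries out and you do not mention.
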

	
	The proof of relation \eqref{stab-HC}  relies on a slightly extended version of a  recent stability result for the Pr\'ekopa--Leindler inequality due to Figalli, van Hintum  and Tiba \cite[Corollary 1.7]{FigallivanHintumTiba_25}, stated as Theorem \ref{Theorem_Figalli_etal} below. The sharpness of the exponent $\frac{1}{2}$ of the deficit $\delta^{\sf HC}_{p ,t,\alpha,\beta}(g)$ on the right side of \eqref{stab-HC} is obtained  by using a suitable family of power-type test-functions, close to the extremizer from \eqref{HCI_extremizers}. 
	The exponent $\frac{1}{2}$ matches the same sharp exponent in the stability result of the  Pr\'ekopa--Leindler inequality as indicated in the one-dimensional example of \cite[Remark 1.8]{FigallivanHintumTiba_25}.
	In addition, Theorem \ref{main-theorem-HC} characterizes directly the equality in the hypercontractivity estimate  \eqref{HCI_ineq}, provided by the class of functions \eqref{HCI_extremizers},  see Corollary \ref{HC=}. 
	
	% Our second objective is to obtain a stability result for the   $L^p$-Euclidean logarithmic Sobolev inequality \eqref{LSI_ineq} through a  hypercontractivity-type stability estimate. Such a step requires a limiting argument for $t\to 0$ in the hypercontractivity estimate, where $\beta$ also depends on $t>0$. Unfortunately, the constant  $C(n,p,  \alpha,\beta,t)$ in relation \eqref{stab-HC} -- whose expression is given in \eqref{C-constant} -- does not allow such a limiting argument, which is closely related to the question formulated by  Figalli, van Hintum  and Tiba \cite[Remark 1.8]{FigallivanHintumTiba_25}. Therefore, in order to have a suitable hypercontractivity stability, we state the following result:  
	
	As we already pointed out, our next objective is to obtain a stability result for the  $L^p$-Euclidean logarithmic Sobolev inequality  for every $p>1$, which is intended to be derived  through the above hypercontractivity  stability. This step requires a limiting argument  in the hypercontractivity estimate as $t\to 0$, where $\beta=\beta(t)\to \alpha $ as $t\to 0$. Unfortunately, in its current form   the right-hand side of relation \eqref{stab-HC} does not allow such a limiting procedure as we do not know the exact behavior of the expression $C(n,p,\alpha, \beta, t)$ as $t\to 0$ and $\beta(t) \to \alpha$. The key ingredient that we can use to carry out this limiting procedure, will be a more refined estimate where a more precise information on the constant $C(n,p,\alpha, \beta, t)$ is available. More precisely, instead of \eqref{stab-HC} we would need  an expression of the form 
	$$\int_{\R^n}\left|e^{-\theta \cdot \frac{|x-x_0|^{p^\prime}}{p^\prime}}-a^{-\frac{\alpha}{\beta}} e^{\alpha  g\left( x\right)}\right| \mathrm{d} x  \leq C(n,p)\left(\frac{\delta^{\sf HC}_{p ,t,\alpha,\beta(t)}(g)}{1-\frac{\alpha}{\beta(t)}}\right)^{\frac{1}{\gamma}}$$
	for some $\gamma \geq 2$, instead of the bound  appearing in Theorem  \ref{main-theorem-HC}. The quest for the validity of a similar estimate with the exponent $\frac{1}{\gamma}= \frac{1}{2}$ in the context of  the Pr\'ekopa--Leindler stability has been formulated by  Figalli, van Hintum  and Tiba \cite[Remark 1.8]{FigallivanHintumTiba_25}. At the moment only  weaker  Pr\'ekopa--Leindler stability inequalities are available (see Theorems \ref{Theorem_Boroczky-De} and  \ref{Theorem_Figalli_Ramos} below) allowing us to establish the following result:  
	
	\begin{theorem} \label{Theorem2}
		Let $n \geq 1$ and  $p>1$. Then there exists a constant $ C(n,p) >0$ with the following property. Given any $t>0$, $0<\alpha<\beta$, and any concave function $g:\mathbb R^n\to \mathbb R$ such that $e^g\in L^\alpha(\mathbb R^n)$, and
		$
		\delta^{\sf HC}_{p ,t,\alpha,\beta}(g)  \ll 1,
		$  there exists $x_0 \in \mathbb{R}^n$ such that
		\begin{equation}\label{new-stability-Boroczky-De}
			\int_{\R^n}\left|e^{-\theta \cdot \frac{|x-x_0|^{p^\prime}}{p^\prime}}-a^{-\frac{\alpha}{\beta}} e^{\alpha  g\left( x\right)}\right| \mathrm{d} x  \leq C(n,p) \theta^{-\frac{n}{p'}} \left({\frac{\delta^{\sf HC}_{p ,t,\alpha,\beta}(g)}{\tau}}\right)^\frac{1}{19} ,
		\end{equation}
		where $\theta>0$ and
		$
		a>0
		$ 
		are from \eqref{theta_and_a_definition} and $\tau=\min \left(\frac{\alpha}{\beta}, 1-\frac{\alpha}{\beta}\right)$.
		
		In addition to the above assumptions, if $g:\mathbb R^n\to \mathbb R$ is radially symmetric, then 
		\begin{equation}\label{new-stability-Figalli-Ramos-HC}
			\int_{\R^n}\left|e^{-\theta \cdot \frac{|x|^{p^\prime}}{p^\prime}}-a^{-\frac{\alpha}{\beta}} e^{\alpha  g\left( x\right)}\right| \mathrm{d} x  \leq C(n,p) \theta^{-\frac{n}{p'}} \left({\frac{\delta^{\sf HC}_{p ,t,\alpha,\beta}(g)}{\tau}}\right)^\frac{1}{2} ,
		\end{equation}
		and the exponent  $\frac{1}{2}$ of the latter term in \eqref{new-stability-Figalli-Ramos-HC} is sharp.
	\end{theorem}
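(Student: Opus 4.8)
The plan is to re-run the Pr\'ekopa--Leindler reduction underlying Theorem \ref{main-theorem-HC}, but to feed into it the ``constant-explicit'' stability estimates for the Pr\'ekopa--Leindler inequality available for (log-)concave data --- that of B\"or\"oczky and De (Theorem \ref{Theorem_Boroczky-De}) in the general concave case, and that of Figalli and Ramos (Theorem \ref{Theorem_Figalli_Ramos}) in the radial case --- in place of the Figalli--van Hintum--Tiba estimate used for Theorem \ref{main-theorem-HC}. Given $t>0$, $0<\alpha<\beta$ and $g$ with $e^{g}\in L^{\alpha}(\R^{n})$, this reduction associates with $g$ a triple $(u,v,w)$ and a weight $\lambda=\alpha/\beta$, where $u$ is a fixed normalisation of $e^{\alpha g}$, $v$ is the generalised Gaussian $e^{-\theta(\beta/\alpha)^{p'}|\cdot|^{p'}/p'}$ with $\theta$ as in \eqref{theta_and_a_definition}, and $w$ is built from $e^{\beta\mathbf{Q}_{t}g}$, in such a way that the Pr\'ekopa--Leindler hypothesis $w(\lambda x+(1-\lambda)y)\ge u(x)^{\lambda}v(y)^{1-\lambda}$ holds (this uses only the definition of $\mathbf{Q}_{t}$ and the convexity of $|\cdot|^{p'}$) and that --- using the invariance of $\delta^{\sf HC}$ under adding a constant to $g$, together with the natural $\min(\lambda,1-\lambda)$-normalisation of Pr\'ekopa--Leindler deficits --- the Pr\'ekopa--Leindler deficit of $(u,v,w)$ is bounded by $C(n,p)\,\delta^{\sf HC}_{p,t,\alpha,\beta}(g)/\tau$, with $\tau=\min(\alpha/\beta,1-\alpha/\beta)$. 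A dilation $x\mapsto\theta^{-1/p'}x$ then normalises $\theta$ to $1$ and is responsible for the prefactor $\theta^{-n/p'}$ in \eqref{new-stability-Boroczky-De}--\eqref{new-stability-Figalli-Ramos-HC}.

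This is where the concavity of $g$ enters: $u=e^{\alpha g}$ and $v$ are log-concave, so, replacing $w$ by the (smaller, log-concave) $\lambda$-Minkowski combination $\widetilde w(z)=\sup\{\,u(x)^{\lambda}v(y)^{1-\lambda}:\lambda x+(1-\lambda)y=z\,\}$, which only decreases the deficit, we land in the class of log-concave functions to which Theorems \ref{Theorem_Boroczky-De} and \ref{Theorem_Figalli_Ramos} apply. These tell us that $u$ is $L^{1}$-close to a common log-concave profile; since the member $v$ of the triple is \emph{exactly} a generalised Gaussian, that profile is itself a generalised Gaussian, whose width and mass are then forced --- the width by the rigidity of the relation $w=e^{\beta\mathbf{Q}_{t}(\tfrac{1}{\alpha}\log u)}$, the mass by the normalising constant $a$ of \eqref{theta_and_a_definition} --- to be those of $a^{-\alpha/\beta}e^{-\theta|x-x_{0}|^{p'}/p'}$. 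Plugging in the quantitative B\"or\"oczky--De bound, whose stability exponent is the $\tfrac{1}{19}$ in \eqref{new-stability-Boroczky-De}, gives that estimate; for radial $g$ the function $u$ is radial, hence there is no translation freedom ($x_{0}=0$), and the Figalli--Ramos bound applies with its sharp exponent $\tfrac{1}{2}$, yielding \eqref{new-stability-Figalli-Ramos-HC}.

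The optimality of $\tfrac{1}{2}$ in \eqref{new-stability-Figalli-Ramos-HC} is proved as in the sharpness part of Theorem \ref{main-theorem-HC}: one takes a one-parameter family $g_{\varepsilon}$ of radial concave power-type functions perturbing the extremiser \eqref{HCI_extremizers}, for instance obtained by subtracting from it a small multiple $\varepsilon|x|^{q}/q$ with $q>1$, $q\ne p'$ (so that $g_{\varepsilon}$ remains concave and $e^{g_{\varepsilon}}\in L^{\alpha}$). Since the extremisers form the zero set of $\delta^{\sf HC}$, which after the reduction is a smooth functional of $g$ with vanishing first variation there, one gets $\delta^{\sf HC}_{p,t,\alpha,\beta}(g_{\varepsilon})=O(\varepsilon^{2})$; on the other hand a direct computation shows that $\|e^{-\theta|x|^{p'}/p'}-a_{\varepsilon}^{-\alpha/\beta}e^{\alpha g_{\varepsilon}}\|_{1}$ is of exact order $\varepsilon$, because the residual $\varepsilon|x|^{q}e^{-\theta|x|^{p'}/p'}$ cannot be cancelled to first order by any change of width or of the normalising constant precisely when $q\ne p'$. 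Hence the left-hand side of \eqref{new-stability-Figalli-Ramos-HC} is of order $\sqrt{\delta^{\sf HC}}$ along this family, so no larger exponent is admissible; this matches the one-dimensional example of \cite[Remark 1.8]{FigallivanHintumTiba_25}.

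The crux lies in the first two paragraphs taken together. One has to verify that the triple genuinely meets the hypotheses (log-concavity, smallness of the deficit, and whatever regularity is assumed) of Theorems \ref{Theorem_Boroczky-De} and \ref{Theorem_Figalli_Ramos}, that the replacement of $w$ by $\widetilde w$ preserves the deficit comparison, and --- the real point --- that the constants propagate through the Pr\'ekopa--Leindler reduction so that the final bound depends on $n$ and $p$ only, with the whole $t,\alpha,\beta$-dependence confined to the explicit factors $\theta^{-n/p'}$ and $\tau^{-1/\gamma}$. It is precisely this clean separation of variables --- rather than the mere existence of \emph{a} stability estimate, which Theorem \ref{main-theorem-HC} already provides --- that makes possible the subsequent limiting argument $t\to 0$, $\beta(t)\to\alpha$ toward the $L^{p}$-logarithmic Sobolev inequality.
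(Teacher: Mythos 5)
Your overall strategy is the same as the paper's --- feed the hypercontractivity PL triple into the B\"or\"oczky--De and Figalli--Ramos stability results instead of the Figalli--van Hintum--Tiba one --- but there is a concrete error in the setup that propagates through your second paragraph.

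You assign $u$ to (a normalisation of) $e^{\alpha g}$ and $w$ to (a function built from) $e^{\beta\mathbf{Q}_t g}$. With that assignment the Pr\'ekopa--Leindler hypothesis $w(\lambda x+(1-\lambda)y)\geq u(x)^{\lambda}v(y)^{1-\lambda}$ does \emph{not} hold: the Hopf--Lax operator is an infimal convolution, so the pointwise inequality one extracts from \eqref{Q-t-definition} goes the other way. The reduction in the paper (relations \eqref{definition-u-v-w}--\eqref{condition_PL_1}) takes
$$u(x)=e^{\beta\mathbf{Q}_t g(x)},\qquad v(x)=e^{-\theta_0|x|^{p'}/p'},\qquad w(x)=e^{\alpha g(\beta x/\alpha)},$$
so that $w$ --- not $u$ --- is the target of the convex combination, and $w$ is log-concave precisely when $g$ is concave. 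That is exactly the hypothesis of Theorem \ref{Theorem_Boroczky-De}, which is stated for $w$ log-concave and concludes $\int|v(\cdot-x_0)-a^{-\lambda}w|\leq c(n)(\varepsilon/\tau)^{1/19}\int v$. With the correct assignment the concavity of $g$ delivers the required log-concavity directly and no repair is needed; your proposed replacement of $w$ by the supremal convolution $\widetilde w$ is a workaround for a problem that only arises from the wrong labelling, and moreover with your labelling it would produce a conclusion about $|v-a^{-\lambda}\widetilde w|$ with $\widetilde w$ a log-concave minorant of $e^{\beta\mathbf{Q}_t g}$, which is not the quantity the theorem estimates.

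Two smaller points. First, the PL deficit of the correctly assigned triple is \emph{equal} to $\delta^{\sf HC}_{p,t,\alpha,\beta}(g)$ (this is what \eqref{eq_3}--\eqref{eq_4} say), and the factor $\tau$ appears only through the stability theorems themselves; the ``invariance under adding a constant'' and ``natural $\min(\lambda,1-\lambda)$ normalisation'' you invoke are not doing any work here. Second, in the radially symmetric case the nontrivial verification is that $\mathbf{Q}_t g$ is radial (the paper proves this via $O(n)$-equivariance of the Hopf--Lax infimum); with your swapped labels this is exactly the ``$w$'' whose radiality you take for granted. Your alternative sharpness family (extremiser minus $\varepsilon|x|^q/q$, $q\neq p'$) is a plausible choice, but the paper simply reuses the family $g_\varepsilon=-(\,(p')^{-p'}+\varepsilon)|x|^{p'}$ from Step 2 of Theorem \ref{main-theorem-HC}, which is already radial and concave, so nothing new is needed.
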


	The proofs of \eqref{new-stability-Boroczky-De} and \eqref{new-stability-Figalli-Ramos-HC} follow by two recent stability results for the Pr\'ekopa--Leindler inequality, the first by B\"or\"oczky and De \cite[Theorem 1.4]{BoroczkyDe}, the second by Figalli and Ramos \cite{FigalliRamos}, respectively. As expected, both estimates \eqref{new-stability-Boroczky-De} and \eqref{new-stability-Figalli-Ramos-HC} come  at a certain cost in comparison with Theorem \ref{main-theorem-HC}. Indeed, in \eqref{new-stability-Boroczky-De}, the function $g$ 
	is required to be concave and the sharpness of the exponent is lost, dropping from order $1/2$ to $1/19,$ while in \eqref{new-stability-Figalli-Ramos-HC}, even though we have the sharpness of the exponent $1/2$,  the price is paid by the concavity and radial symmetry of the function $g$. 
	
	%	  and second, the sharpness of the exponent is lost, dropping from order 1/2 to 1/19. 
	
	The main advantage of Theorem \ref{Theorem2} is that when $\beta:=\beta(t)$ is suitably chosen,  there exists $c>0$ such that  $\tau:=\tau(t)\sim ct$
	for $0<t\ll 1$, thus we can take a meaningful limit in \eqref{new-stability-Boroczky-De} and \eqref{new-stability-Figalli-Ramos-HC} as $t\to 0$,  which will
	imply  stability results for the $L^p$-Euclidean logarithmic Sobolev inequality.   
	
	Let us recall that for a given $n \geq 1$ and $p>1$, the $L^p$-Euclidean logarithmic Sobolev inequality
	in $\R^n$ can be stated as
	\begin{equation} \label{LSI_ineq}
		\frac{1}{\|f\|_p^p} \Ent(|f|^p) 
		\leq \frac{n}{p} \log \left( \mathcal{L}_{n,p} \frac{1}{\|f\|_p^p} \int_{\R^n} |\nabla f|^p \, \mathrm{d} x \right), \ \ \forall f\in W^{1,p}(\mathbb R^n),
	\end{equation}
	where
	$$\Ent(|f|^p) = \int_{\R^n} |f|^p \log |f|^p \, \mathrm{d} x - \int_{\R^n} |f|^p \mathrm{d} x \cdot\log \int_{\R^n} |f|^p \mathrm{d} x$$
	stands for the entropy of $|f|^p$, 
	and the constant
	\begin{equation} \label{LSI_constant}
		\mathcal{L}_{n,p}=\frac{p}{n}\left(\frac{p-1}{e}\right)^{p-1}\left(\Gamma\left(\frac{n}{p^{\prime}}+1\right) \omega_n\right)^{-\frac{p}{n}}
	\end{equation}
	is optimal. Hereafter,  $p'=\frac{p}{p-1}$ is the conjugate of $p$. 
	Furthermore, equality holds in \eqref{LSI_ineq} if and only if
	% Question:
	% Are these normalized functions? In the general ineq 1.1., do we have a bigger family of extremizers based on the p norm of f?
	%	\begin{equation} \label{LSI_extremizers}
		\begin{equation}\label{Gaussian-extremizer-01}
			f(x)=c e^{-\frac{1}{C}|x-x_0|^{p^\prime}}, \ x\in \mathbb R^n,
		\end{equation}
		%		
		%		
		%		f(x)=\left(\left(\frac{C}{p}\right)^{\frac{n}{p^\prime}} \Gamma\left(\frac{n}{p^\prime}+1\right) \omega_n\right)^{-\frac{1}{p}} e^{-\frac{1}{C}|x-x_0|^{p^\prime}}
		%	\end{equation}
	for some $x_0 \in \R^n$, $c\in \mathbb R\setminus \{0\}$ and $C >0$, see del Pino and Dolbeault \cite{dPD} for $1<p<n$, and Balogh, Don and Krist\'aly \cite{BDK} for the general case $p>1$ and $n\geq 1$.

	%	
	%	
	%	 recalled in Theorem \ref{Theorem_Boroczky-De},   
	%	The estimate  \eqref{new-stability-Boroczky-De}  First, 
	%	
	To state the stability of the $L^p$-Euclidean logarithmic Sobolev inequality, let us introduce for every $f\in W^{1,p}(\mathbb R^n)\setminus \{0\}$
	%	the function space 
	%	\begin{equation} \label{Def_Wp+} 
		%		W_+^{1,p}(\mathbb R^n):= \{ f\in W^{1,p}(\mathbb R^n): \exists \ C_1,C_2>0\ \text{such that}\ f(x)\geq C_1e^{-C_2|x|^{p'}}  \text{ for  a.e.}  \ x \in \R^n  \}   
		%	\end{equation}
	%	and for $f \in  W_+^{1,p}(\mathbb R^n)$ consider 
	the  $L^p$-\textit{logarithmic Sobolev deficit} associated to  inequality \eqref{LSI_ineq}, i.e., 	
	\begin{equation} \label{LSI_deficit}
		\delta_p^{\sf LSI}(f) = \frac{n}{p} \log \left(\mathcal{L}_{n,p} \frac{1}{\|f\|_p^p}\int_{\mathbb{R}^n}|\nabla f|^p \mathrm{d} x \right) - \frac{\Ent\left(|f|^p\right)}{\|f\|_p^p} \geq 0,
	\end{equation}
	where $\mathcal{L}_{n,p}$ is from \eqref{LSI_constant}. 
	
%	% Question:
%	% Shouldnt we somehow involve in the assumptions that $e^{{\bf Q}_tg}\in L^\beta(\mathbb R^n)$?
%	\textcolor{blue}{What do you think what happens if $p\to 1$ in this inequality? Of course in the first implication the case $p=1$ does not make really sense as it would be $p'= \infty$ in this case. However in the case of LSI it might 
%		be that there is some chance to pass to the limit in the Theorem below.} 	\textcolor{brown}{See later, on page 20.}
	
	\begin{theorem}\label{Theorem-LSI-main}
		Let $n \geq 1$ and $p>1$. There exists a constant $C(n) >0$ with the following property. For every  log-concave function $f \in W^{1,p}(\mathbb R^n)%\cap {\rm Lip}_{\rm loc}(\mathbb R^n)
		$ satisfying the growth 
		condition 
		\begin{equation}\label{one-sided}
			f(x)\geq c_1e^{-c_2|x|^{p'}},\ x\in \mathbb R^n,
		\end{equation}
		for some $c_1,c_2>0$, there exists  $x_0 \in \mathbb{R}^n$ such that    
		\begin{equation}\label{lsi-estimate}
			\int_{\R^n}\left| {\sf C}_2  e^{-\frac{p}{{\sf C}_1} |x-x_0|^{p^\prime}} - \frac{f^p(x)}{\|f\|_p^p} \right| \mathrm{d} x  \leq C(n)
			%c^n n^n 
			\delta_p^{\sf LSI}(f)^\frac{1}{19}, 
		\end{equation}
		where
		\begin{equation}\label{C-1-C-2}
			{\sf C}_1: = p' \left( \frac{n}{p} \right)^{p^\prime-1} {\|f\|_p^{p^\prime}} \left( \int_{\mathbb{R}^n} |\nabla f|^p \,        \mathrm{d}x \right)^{1-p^\prime}\ \ 
			and \ \ 
			{\sf C}_2=\left(\left(\frac{{\sf C}_1}{p}\right)^{\frac{n}{p^\prime}} \Gamma\left(\frac{n}{p^\prime}+1\right) \omega_n\right)^{-1}.
		\end{equation}
		
		In addition, we also have that
		\begin{equation}\label{lsi-estimate-Schwarz}
			\int_{\R^n}\left| {\sf C}_2^\star  e^{-\frac{p}{{\sf C}_1^\star} |x|^{p^\prime}} - \frac{{(f^\star)}^p(x)}{\|f\|_p^p} \right| \mathrm{d} x  \leq C(n)
			%c^n n^n 
			\delta_p^{\sf LSI}(f)^\frac{1}{2}, 
		\end{equation}
		and the exponent  $\frac{1}{2}$  in \eqref{lsi-estimate-Schwarz} is sharp, 	where $f^\star$ stands for the Schwarz-rearrangement of $f$ and ${\sf C}_1^\star>0$ and  ${\sf C}_2^\star>0$ are the corresponding values from \eqref{C-1-C-2}, computed for $f^\star$ instead of $f$.
	\end{theorem}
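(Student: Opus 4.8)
The plan is to derive \eqref{lsi-estimate} and \eqref{lsi-estimate-Schwarz} from the hypercontractivity stability of Theorem~\ref{Theorem2} by sending the time parameter $t\to 0^+$; indeed, the explicit factor $\theta^{-n/p'}$ and the presence of $\tau$ in the denominator of the deficit in \eqref{new-stability-Boroczky-De}--\eqref{new-stability-Figalli-Ramos-HC} were arranged precisely to make this passage feasible. Fix a log-concave $f\in W^{1,p}(\R^n)$ satisfying \eqref{one-sided} (so in particular $f>0$ on all of $\R^n$, which makes $\log f$ finite) and set $\alpha=p$, $g=\log f$. Then $g$ is concave, $e^g\in L^\alpha(\R^n)$, $e^{\alpha g}=f^p$, $\|e^g\|_\alpha=\|f\|_p$, and, crucially, $|\nabla g|^p e^{\alpha g}=|\nabla f|^p$; this last identity is the bridge between the $L^p$-gradient in \eqref{LSI_ineq} and the Hamilton--Jacobi equation $\partial_t\mathbf{Q}_t g+\frac1p|\nabla\mathbf{Q}_t g|^p=0$ governing the Hopf--Lax semigroup. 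Following the classical equivalence of \eqref{LSI_ineq} with the hypercontractivity estimate \eqref{HCI_ineq} (Gentil~\cite{Gentil_JFA}, Bobkov--Gentil--Ledoux~\cite{BGL}), for $0<t\ll 1$ I would take the exponent path
$$\beta(t)=\frac{\alpha}{1-\nu t},\qquad \nu:=\frac{p}{n}\cdot\frac{\int_{\R^n}|\nabla f|^p\,\mathrm{d}x}{\|f\|_p^p}>0,$$
which is the unique normalisation for which $\theta(t)$ from \eqref{theta_and_a_definition} is the constant $pp'/{\sf C}_1$ and $(\beta(t)/\alpha)^{p'}\to 1$; then $\beta(t)\downarrow \alpha=p$ and $\tau(t)=1-\alpha/\beta(t)=\nu t$ for small $t$.

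The core of the argument is a quantitative form of Gentil's equivalence. I would Taylor-expand $t\mapsto(\mathcal{C}_{p,t,\alpha,\beta(t)})^{\alpha}\|e^g\|_\alpha^\alpha/\|e^{\mathbf{Q}_t g}\|_{\beta(t)}^\alpha$ at $t=0^+$ --- legitimate because \eqref{one-sided} keeps $\mathbf{Q}_t g$ and $\partial_t\mathbf{Q}_t g$ under control --- inserting $\partial_t\mathbf{Q}_t g|_{t=0}=-\frac1p|\nabla g|^p$, the identity $|\nabla g|^p e^{\alpha g}=|\nabla f|^p$, the explicit form \eqref{HCI_constant} of $\mathcal{C}_{p,t,\alpha,\beta}$ (whose value tends to $1$), and the definition \eqref{LSI_deficit}; matching the coefficient of the entropy term pins the leading order down to
$$\delta^{\sf HC}_{p,t,\alpha,\beta(t)}(g)=\nu\,\delta_p^{\sf LSI}(f)\,t+o(t)\qquad (t\to 0^+),$$
hence $\delta^{\sf HC}_{p,t,\alpha,\beta(t)}(g)/\tau(t)\to\delta_p^{\sf LSI}(f)$, which is in particular $\ll 1$ for small $t$ so that Theorem~\ref{Theorem2} applies. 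Next I would pass to the limit in \eqref{new-stability-Boroczky-De}: on the right-hand side $\theta(t)$ is constant and, by \eqref{C-1-C-2}, ${\sf C}_2\,\theta(t)^{-n/p'}$ equals the $f$-independent number $(p')^{-n/p'}/(\Gamma(n/p'+1)\omega_n)$; on the left-hand side, $e^{-\theta(t)|x-x_0(t)|^{p'}/p'}\to e^{-\frac{p}{{\sf C}_1}|x-x_0|^{p'}}$ once $x_0(t)$ is shown to converge along a subsequence, and $a(t)^{-\alpha/\beta(t)}e^{\alpha g}\to(\|f\|_p^p\,{\sf C}_2)^{-1}f^p$ in $L^1(\R^n)$ by dominated convergence (using \eqref{one-sided} for a dominating function), since $a(t)\to\|f\|_p^p\,{\sf C}_2$ from \eqref{theta_and_a_definition} together with $\int_{\R^n}e^{-\frac{p}{{\sf C}_1}|x|^{p'}}\mathrm{d}x={\sf C}_2^{-1}$. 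Multiplying the resulting limiting inequality by ${\sf C}_2$ gives \eqref{lsi-estimate}; the convergence of $x_0(t)$ is a tightness argument, in which the case when $\delta_p^{\sf LSI}(f)$ exceeds a threshold depending only on $n$ is trivial because the left-hand side of \eqref{lsi-estimate} is at most $2$ (the first bump having mass $1$ by the choice of ${\sf C}_2$), while otherwise the right-hand side is small enough that $x_0(t)$ cannot escape to infinity without the two mutually singular bumps failing to cancel.

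For \eqref{lsi-estimate-Schwarz} I would run the same scheme with the radial estimate \eqref{new-stability-Figalli-Ramos-HC} in place of \eqref{new-stability-Boroczky-De}, applied to the Schwarz rearrangement $f^\star$: this $f^\star\in W^{1,p}(\R^n)$ is radial, log-concave (a consequence of the Brunn--Minkowski inequality applied to the convex super-level sets of $f$), and still satisfies \eqref{one-sided} since rearrangement is order-preserving, so $g^\star=\log f^\star$ is concave and radial and the limiting procedure applies verbatim, yielding $\int_{\R^n}|{\sf C}_2^\star e^{-\frac{p}{{\sf C}_1^\star}|x|^{p'}}-(f^\star)^p/\|f\|_p^p|\,\mathrm{d}x\le C(n)\,\delta_p^{\sf LSI}(f^\star)^{1/2}$; since $\|f^\star\|_p=\|f\|_p$, $\Ent((f^\star)^p)=\Ent(f^p)$, and $\int|\nabla f^\star|^p\le\int|\nabla f|^p$ by the P\'olya--Szeg\H{o} inequality, one has $\delta_p^{\sf LSI}(f^\star)\le\delta_p^{\sf LSI}(f)$ and \eqref{lsi-estimate-Schwarz} follows. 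The sharpness of the exponent $1/2$ there I would obtain exactly as in Theorem~\ref{main-theorem-HC}, by testing against a one-parameter family of radial power-type perturbations of the extremizer \eqref{Gaussian-extremizer-01}, for which the left-hand side of \eqref{lsi-estimate-Schwarz} is of order $\varepsilon$ while $\delta_p^{\sf LSI}$ is of order $\varepsilon^2$. The main obstacle I anticipate is the Gentil-type expansion of the hypercontractivity deficit in the regime $t\to 0^+$, $\beta(t)\to\alpha$ simultaneously --- where uniformity of all the limits rests on the one-sided growth bound \eqref{one-sided} --- after which the convergence of $x_0(t)$ and of the normalising constants $\theta(t),a(t)$ is comparatively routine.
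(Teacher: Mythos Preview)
Your derivation of \eqref{lsi-estimate} and \eqref{lsi-estimate-Schwarz} follows the paper's route: apply Theorem~\ref{Theorem2} along a path $\beta(t)\downarrow\alpha$ and pass to the limit $t\to 0^+$, using a Gentil-type first-order expansion of the hypercontractivity deficit (which the paper isolates as Proposition~\ref{prop_1}). Your normalisation $\alpha=p$, $g=\log f$ is a harmless reparametrisation of the paper's $\alpha=1$, $g=p\log f$; the facts that your $\theta(t)\equiv pp'/{\sf C}_1$ is exactly constant rather than merely convergent, and that you include an explicit tightness argument for $x_0(t)$, are cosmetic refinements of the same scheme. The treatment of \eqref{lsi-estimate-Schwarz} via Schwarz rearrangement, preservation of log-concavity, and P\'olya--Szeg\H{o} is likewise the paper's.

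The sharpness argument, however, has a genuine gap. The test family used for Theorem~\ref{main-theorem-HC} is $g_\varepsilon(x)=-\bigl((p')^{-p'}+\varepsilon\bigr)|x|^{p'}$, and the corresponding $f_\varepsilon=e^{g_\varepsilon/p}$ all lie in the extremal class \eqref{Gaussian-extremizer-01}; hence $\delta_p^{\sf LSI}(f_\varepsilon)=0$ identically, not $O(\varepsilon^2)$ as you assert. The paper flags precisely this obstruction and instead perturbs the \emph{exponent} of $|x|$, taking $f_\varepsilon(x)=e^{-\frac{1}{p}|x|^{p'-\varepsilon}}$. Establishing the needed expansion $\delta_p^{\sf LSI}(f_\varepsilon)=K(n,p)\,\varepsilon^2+o(\varepsilon^2)$ with $K(n,p)>0$ is then nontrivial: it reduces to an inequality for the Trigamma function (Proposition~\ref{trigamma} in the Appendix), after which a Fatou-type lower bound on the left-hand side of \eqref{lsi-estimate-Schwarz} completes the argument. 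Your proposal does not identify the failure of the Theorem~\ref{main-theorem-HC} family, the correct replacement, or the Trigamma step.
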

	
	The proof of Theorem \ref{Theorem-LSI-main} is based on Theorem \ref{Theorem2}. Namely,  \eqref{lsi-estimate} follows by \eqref{new-stability-Boroczky-De}, together with a careful limiting argument that establishes a key relationship between the hypercontractivity and $L^p$-logarithmic Sobolev deficits, respectively, see Proposition \ref{prop_1}; this argument requires also the  growth assumption \eqref{one-sided}.  A similar argument works also for \eqref{lsi-estimate-Schwarz} by means of \eqref{new-stability-Figalli-Ramos-HC}, combined with the P\'olya--Szeg\H o inequality and the fact that log-concavity is preserved under the Schwarz-rearrangement. Clearly, once $f$ is radially symmetric and log-concave, relation \eqref{lsi-estimate-Schwarz} is valid without Schwarz-rearrangement (thus, we can simply write $f$ instead of $f^\star$), see also \eqref{lsi-estimate-radialisra}. The sharpness of the exponent  ${1}/{2}$  in \eqref{lsi-estimate-Schwarz} will be shown by a special class of Gaussian-type functions, close to the function \eqref{Gaussian-extremizer-01}, by using basic properties of the Gamma, Digamma and Trigamma functions.   
	
	Theorem \ref{Theorem-LSI-main} implies  the equality case in the $L^p$-Euclidean logarithmic Sobolev inequality for every $p>1$ whenever the extremal is assumed to be log-concave; the full characterization is given by Balogh, Don and Krist\'aly \cite{BDK} for the general case $p>1$ and $n\geq 1$.

	A natural question arises concerning the limit situation in Theorem \ref{Theorem-LSI-main} whenever $p\searrow 1$. In this case, $\mathcal{L}_{n,p}\to n^{-1}\omega_n^{-1/n}$, while   \eqref{LSI_ineq} reduces to the  $L^1$-\textit{Euclidean logarithmic Sobolev inequality} which holds for the larger class of functions with bounded variation ${\sf BV}(\mathbb R^n)$. In this case 
 the family of extremizers is provided by the characteristic functions of balls in $\mathbb R^n$ (of any radius and center), see  Beckner \cite{Beckner} and  Ledoux \cite{Ledoux}. As expected, the formal  limiting procedures in \eqref{lsi-estimate}   and \eqref{lsi-estimate-Schwarz} when $p\searrow 1$ provide  the expected characteristic functions of balls, but the class of admissible functions  after the limiting argument in  \eqref{one-sided} requires further analysis; for details, see Remark \ref{remark-Beckner}.                    

It is well known that for $p=2$, the $L^2$-Euclidean logarithmic Sobolev inequality \eqref{LSI_ineq} is equivalent to the \textit{Gaussian logarithmic Sobolev inequality}. 
Therefore, in view of Theorem \ref{Theorem-LSI-main}, it is natural to ask for the stability of the Gaussian logarithmic Sobolev inequality.	To answer this question, we can follow the same scheme \hyperref[S]{\textbf{(S)}}  in the Gaussian setting as well as in the Euclidean framework; see Section \ref{section-5}. More precisely, by stability of the Pr\'ekopa--Leindler inequality, we state stability results for the Gaussian hypercontractivity estimate (see Theorems \ref{theorem-G-HC-1} and \ref{G-Theorem2}), as instances of the first implication in  \hyperref[S]{\textbf{(S)}}. These will yield in turn -- as the second implication in  \hyperref[S]{\textbf{(S)}} --  a stability of the Gaussian logarithmic Sobolev inequality (see Theorem \ref{Gaussian-Theorem-LSI-main}).

		%\eqref{two-sided} 

		The structure of the paper is as follows. In Section \ref{section-2} we recall  the stability results for the Pr\'ekopa--Leindler inequality in the specific form that we need them in the proofs  of Theorems \ref{main-theorem-HC} and \ref{Theorem2}.
		%and then a crucial property for trigamma function that is used to prove the sharpness of the exponent $1/2$ in the $L^p$-Euclidean logarithmic Sobolev inequality \eqref{lsi-estimate-Schwarz} (see \S \ref{subsection-2-2}).%
		Section \ref{section-30} is devoted to the proof of 
		Theorems \ref{main-theorem-HC} and \ref{Theorem2}, respectively. In Section \ref{section-4} we prove  Theorem \ref{Theorem-LSI-main},  which is based on Proposition \ref{prop_1},  stating a connection 
		between the hypercontractivity deficit and $L^p$-logarithmic Sobolev deficit, respectively. In Section \ref{section-5} we prove stability results for the Gaussian hypercontractivity estimates and Gaussian logarithmic Sobolev inequality. 	 Section \ref{section-6} is devoted to final comments and open questions.
		In the Appendix we derive a Hamilton--Jacobi equation at the origin for the Hopf--Lax semigroup as well as we establish a  crucial property of the Trigamma function that is used to prove the sharpness of the exponent $1/2$ in the $L^p$-Euclidean logarithmic Sobolev inequality \eqref{lsi-estimate-Schwarz}.
		%(see \S \ref{subsection-2-2})%
		
		\medskip 
		
		\noindent {\bf Acknowledgements:} We would like to thank K\'aroly B\"or\"oczky, Alessio Figalli and Jo$\tilde{\rm a}$o Pedro Ramos for discussions related to the subject of this paper.

		\section{Preliminaries: Stability in the Pr\'ekopa--Leindler inequality}\label{section-2} 
		%	In this section we  recall recent stability results concerning  the Pr\'ekopa--Leindler inequality that will play an essential role in our proofs.  
		
		%	\subsection{} \label{subsection-2-1}
		
		Let $u, v, w: \mathbb{R}^n \rightarrow \mathbb{R}_{+}$ be integrable functions and $\lambda\in (0,1)$ such that
		\begin{equation}\label{PL-feltetel}
			w(\lambda x+(1-\lambda) y) \geq u(x)^\lambda v(y)^{1-\lambda} \ \ {\rm for\ every}\ \ x, y \in \mathbb{R}^n. 
		\end{equation}
		The Pr\'ekopa--Leindler inequality asserts that 
		$$\ds\int_{\mathbb{R}^n} w \geq  \left(\int_{\mathbb{R}^n} u \right)^\lambda \left(\int_{\mathbb{R}^n} v \right)^{1-\lambda} .$$
		Moreover, equality holds in the latter inequality if and only if $u,v,w$  are log-concave functions up to a set of measure zero and there exists $x_0\in \mathbb R^n$ such that
		\begin{itemize}
			\item $u(x) = a v(x- x_0)$  for a.e.  $x \in \mathbb{R}^n$, and
			\item $w(x) = a^{\lambda} v(x-\lambda x_0)$  for a.e.  $x \in \mathbb{R}^n$, 
			% 	\item $\ds\int_{\mathbb{R}^n} w =(1+\varepsilon) \left(\int_{\mathbb{R}^n} u \right)^\lambda \left(\int_{\mathbb{R}^n} v \right)^{1-\lambda}$ for some $\varepsilon \geq 0$.
		\end{itemize}
		where $a = \int_{\R^n} u / \int_{\R^n} v$, see Dubuc \cite{Dubuc} and also  Balogh and Krist\'aly \cite{BK-Advances}. 
		
		In the sequel, we recall some recent stability results for the Pr\'ekopa--Leindler inequality which are the key tools in the proof of our main results. To do this, we assume that for some $\varepsilon \geq 0$,
		\begin{equation}\label{PL-epsilon}
			\ds\int_{\mathbb{R}^n} w =(1+\varepsilon) \left(\int_{\mathbb{R}^n} u \right)^\lambda \left(\int_{\mathbb{R}^n} v \right)^{1-\lambda}.
		\end{equation}
		
		%		\begin{itemize}
			%		\item $w(\lambda x+(1-\lambda) y) \geq u(x)^\lambda v(y)^{1-\lambda}$ ~ for all $x, y \in \mathbb{R}^n$, and
			%		\item  
			%	\end{itemize}

		According to Figalli, van Hintum  and Tiba \cite[Corollary 1.7]{FigallivanHintumTiba_25}, we have the following general stability result: 
		
		\begin{theorem}[Figalli, van Hintum and Tiba \cite{FigallivanHintumTiba_25}]\label{Theorem_Figalli_etal}
			Let $n \in \mathbb{N}$ and $\lambda \in(0,1 / 2]$. Then there exists an absolute constant $C = C(n, \lambda) > 0$ with the following property. Let $u, v, w: \mathbb{R}^n \rightarrow \mathbb{R}_{+}$ be integrable functions such that \eqref{PL-feltetel} and \eqref{PL-epsilon} hold for some $\varepsilon\geq 0.$ Then 
			%		\begin{itemize}
				%			\item $w(\lambda x+(1-\lambda) y) \geq u(x)^\lambda v(y)^{1-\lambda}$ ~ for all $x, y \in \mathbb{R}^n$, and
				%			\item $\ds\int_{\mathbb{R}^n} w =(1+\varepsilon) \left(\int_{\mathbb{R}^n} u \right)^\lambda \left(\int_{\mathbb{R}^n} v \right)^{1-\lambda}$ for some $\varepsilon \geq 0$.
				%		\end{itemize}
			there exist translation vectors $x_0,y_0 \in \R^n$  such that
			\begin{equation}\label{u-v-w}
				\int_{\mathbb{R}^n}|u(x) - a  v(x-x_0)| \mathrm{d} x +	a\int_{\mathbb{R}^n}|a^{-\lambda}w(x) -   v(x-y_0)| \mathrm{d} x \leq C \sqrt{\varepsilon} \int_{\mathbb{R}^n} u \mathrm{d} x,
			\end{equation} 
			where $a = \int_{\R^n} u / \int_{\R^n} v$. 
		\end{theorem}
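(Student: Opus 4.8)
The plan is to run the classical superlevel-set proof of the Prékopa--Leindler inequality \emph{quantitatively}, feeding the level sets into the recent sharp stability of the Brunn--Minkowski inequality of Figalli, van Hintum and Tiba \cite{FigallivanHintumTiba_24}. First I would normalize: since replacing $u$ by $u/c_1$, $v$ by $v/c_2$ and $w$ by $w/(c_1^{\lambda}c_2^{1-\lambda})$ preserves \eqref{PL-feltetel} and the ratio in \eqref{PL-epsilon}, one may assume $\int_{\R^n}u=\int_{\R^n}v=1$; then $a=1$ and the goal becomes to find $x_0,y_0\in\R^n$ with $\int_{\R^n}|u-v(\cdot-x_0)|\,\mathrm dx+\int_{\R^n}|w-v(\cdot-y_0)|\,\mathrm dx\le C(n,\lambda)\sqrt{\varepsilon}$. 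Writing $\mu(t)=|\{u>t\}|$, $\nu(t)=|\{v>t\}|$, $\omega(t)=|\{w>t\}|$, the inclusion $\{w>t\}\supseteq\lambda\{u>t\}+(1-\lambda)\{v>t\}$ together with the layer-cake formula turns \eqref{PL-epsilon} into
\[
1+\varepsilon=\int_{0}^{\infty}\omega(t)\,\mathrm dt\ \ge\ \int_{0}^{\infty}\bigl(\lambda\,\mu(t)^{1/n}+(1-\lambda)\,\nu(t)^{1/n}\bigr)^{n}\mathrm dt\ \ge\ \Bigl(\int_{0}^{\infty}\mu\Bigr)^{\lambda}\Bigl(\int_{0}^{\infty}\nu\Bigr)^{1-\lambda}=1,
\]
where the first inequality is Brunn--Minkowski applied at each level $t$, and the second is the Prékopa--Leindler inequality itself for the radial functions whose superlevel sets are the balls of volume $\mu(t)$, $\nu(t)$ (a step which absorbs the weighted AM--GM inequality and, being \emph{radial}, is essentially one-dimensional). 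Hence $\varepsilon$ dominates, simultaneously, the integrated Brunn--Minkowski defect $\int_{0}^{\infty}\bigl[\omega(t)-(\lambda\mu(t)^{1/n}+(1-\lambda)\nu(t)^{1/n})^{n}\bigr]\mathrm dt$ and a ``radial/one-dimensional'' defect measuring how far the decreasing profiles $\mu$ and $\nu$ are from coinciding.

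The second step is to localize in $t$. For each level $t$ in the effective range, the sharp Brunn--Minkowski stability of \cite{FigallivanHintumTiba_24}, applied to $\{u>t\}$ and $\{v>t\}$, produces a convex body $K_t$ (and a center) so that $\{u>t\}$, $\{v>t\}$ and $\{w>t\}$ each lie $L^1$-close to an appropriate dilate of a translate of $K_t$, while the radial/one-dimensional defect forces the profiles $\mu$, $\nu$ -- hence the scales of these bodies -- to match. What then remains is (a) to show that $K_t$ and its center vary consistently with $t$, so that a single log-concave profile $h$ emerges whose superlevel sets, up to one common translation, are close to those of $u$, $v$ and $w$; (b) to identify $h$ with $v$ up to translation, using $\int u=\int v=1$; and (c) to integrate the level-wise estimates via the layer-cake identity $\int_{\R^n}|u-h|\,\mathrm dx=\int_{0}^{\infty}|\{u>t\}\,\triangle\,\{h>t\}|\,\mathrm dt$. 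Step (c) is where the exponent $1/2$ appears: the per-level error comes with a square root -- arising from the quadratic vanishing of the AM--GM / one-dimensional defect near equality (equivalently, from passing from an $L^2$-type control on the level-dependent barycenters to an $L^1$ bound) -- so that a Cauchy--Schwarz inequality $\int_{0}^{\infty}\eta(t)^{1/2}\rho(t)\,\mathrm dt\le\bigl(\int_{0}^{\infty}\eta(t)\,\mathrm dt\bigr)^{1/2}\bigl(\int_{0}^{\infty}\rho(t)^{2}\,\mathrm dt\bigr)^{1/2}$ converts the bound $\int\eta\lesssim\varepsilon$ on the local defects $\eta(t)$ into the overall estimate $\lesssim\sqrt{\varepsilon}$, with the weight $\rho$ of bounded $L^2$ norm thanks to the normalization. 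By the one-dimensional example in \cite[Remark~1.8]{FigallivanHintumTiba_25} this exponent is optimal.

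I expect step (a) to be the crux: one must control the two tails -- levels $t\downarrow 0$, where $|K_t|$ grows, and $t$ near the essential suprema of $u$ and $v$, where the level sets degenerate -- and prevent the centers of the $K_t$ from drifting with $t$, so that $x_0,y_0$ may be taken independent of $t$ at the price of a constant depending only on $n$ and $\lambda$. This requires the full (linear) strength of \cite{FigallivanHintumTiba_24} -- a non-sharp Brunn--Minkowski stability would degrade the final exponent below $1/2$ -- together with a preliminary reduction showing that a large discrepancy between the essential suprema, or between the ``widths'', of $u$ and $v$ is itself penalized by $\varepsilon$; otherwise the effective range of levels, hence $\|\rho\|_{L^2}$, is not under control. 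A secondary technical point is that the radial/one-dimensional defect is not a plain Hölder defect but the defect of an inequality between decreasing functions, so converting its smallness into $\int_{0}^{\infty}|\mu(t)-\nu(t)|\,\mathrm dt\lesssim\sqrt{\varepsilon}$ needs a tailored one-dimensional Prékopa--Leindler-type stability lemma, which carries its own $\sqrt{\cdot}$ loss and so does not worsen the final rate. Assembling (a)--(c) with the level-wise Brunn--Minkowski stability then yields \eqref{u-v-w}.
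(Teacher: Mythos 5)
Your proposal does not follow the paper's route at all: the paper treats this theorem as a packaging of \cite[Corollary 1.7]{FigallivanHintumTiba_25} (and an argument from the proof of their Theorem 1.6). Its proof is short: normalize via $f=a^{\lambda-1}u$, $g=a^{\lambda}v$, $h=w$ so that $\int f=\int g$, invoke the cited corollary to get a single log-concave $l$ with $\int(|f-l|+|g-l|)\le C\sqrt{\varepsilon}\int f$ up to translations, control $\int|h-l|$ via $k=\min\{f,g\}$, and conclude by the triangle inequality. You instead set out to re-prove the underlying Pr\'ekopa--Leindler stability from scratch, by running the superlevel-set proof quantitatively and feeding each level into the sharp Brunn--Minkowski stability of \cite{FigallivanHintumTiba_24}. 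That is a legitimate strategy in principle --- it is essentially the strategy of the Figalli--van Hintum--Tiba paper itself --- but what you have written is an outline of that paper, not a proof of the theorem.

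The genuine gap is your step (a), which you yourself flag as ``the crux'' and then do not carry out. Obtaining, from the family of level-wise approximating bodies $K_t$ and level-wise translations, a \emph{single} log-concave profile and a \emph{single} pair of translation vectors $x_0,y_0$ valid for all levels simultaneously --- while controlling the tails $t\downarrow 0$ and $t$ near the essential suprema, the drift of the barycenters of $K_t$ in $t$, and the mismatch between the distribution functions $\mu$ and $\nu$ --- is precisely the main technical content of \cite{FigallivanHintumTiba_25}; it cannot be waved through as a verification. Without it, nothing in your argument produces the fixed $x_0,y_0$ appearing in \eqref{u-v-w}. A secondary but real flaw is the second inequality in your displayed chain: with both level sets taken at the \emph{same} level $t$, the passage from $\int_0^\infty(\lambda\mu^{1/n}+(1-\lambda)\nu^{1/n})^n\,\mathrm dt$ to $(\int\mu)^{\lambda}(\int\nu)^{1-\lambda}$ does not follow from AM--GM plus H\"older (H\"older gives the reverse inequality for $\int\mu^{\lambda}\nu^{1-\lambda}$); the correct reduction pairs level $s$ of $u$ with level $r$ of $v$ at output level $s^{\lambda}r^{1-\lambda}$, i.e.\ one must work with the multiplicative coupling of levels, not a single common level. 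If your aim is to prove the theorem as stated in this paper, the efficient route is the paper's: take \cite[Corollary 1.7]{FigallivanHintumTiba_25} as given and perform the normalization and triangle-inequality bookkeeping, including the $\min\{f,g\}$ argument needed to transfer the estimate to $w$.
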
 
		\begin{proof}
			Let us introduce the functions $f, g, h: \mathbb{R}^n \rightarrow \mathbb{R}_{+}$ defined by 
			$f = a^{\lambda-1} u, \quad g = a^{\lambda} v, \quad h = w. $     
			Clearly, we have that 
			\begin{itemize}
				\item $\ds\int_{\mathbb{R}^n} f=\int_{\mathbb{R}^n} g;$
				\item $h(\lambda x+(1-\lambda) y) \geq f(x)^\lambda g(y)^{1-\lambda}$ for all $x, y \in \mathbb{R}^n$, and
				\item $\ds\int_{\mathbb{R}^n} h =(1+\varepsilon) \int_{\mathbb{R}^n} f $.
			\end{itemize}
			According to Figalli, van Hintum  and Tiba \cite[Corollary 1.7]{FigallivanHintumTiba_25}, there exists a log-concave function $l:\mathbb{R}^n \to \mathbb{R}_+$ and a constant $C = C(n, \lambda)>0$ such that, up to compositions of $f$ and $g$ with translations we have the estimate: 
			\begin{equation}\label{f-g}
				\ds \int_{\mathbb{R}^n} (|f-l|+|g-l|)\mathrm{d} x\leq C\sqrt{\varepsilon}\int_{\mathbb{R}^n}f\mathrm{d} x.
			\end{equation}
			In addition, as in the proof of Theorem 1.6 of \cite{FigallivanHintumTiba_25}, if $k:=\min\{f,g\}$, one has that\footnote{Here we use the notation $O_{n,\lambda}(\sqrt{\varepsilon})$ to indicate that the expression in question is bounded from above (respectively, from below)
				by the quantity 
				$C(n, \lambda)\sqrt{\varepsilon}$, where $C(n, \lambda)$ is a positive constant that  may depend on $n$ and $\lambda$.}
			\begin{equation}\label{h-k-l}
				h\geq k,\ \ \ds \int_{\mathbb{R}^n} k\mathrm{d} x\geq  (1-O_{n,\lambda}(\sqrt{\varepsilon}))\int_{\mathbb{R}^n} f\mathrm{d} x\ \ {\rm and}\ \  \ds \int_{\mathbb{R}^n} |k-l|\mathrm{d} x\leq O_{n,\lambda}(\sqrt{\varepsilon})\int_{\mathbb{R}^n} k\mathrm{d} x.
			\end{equation}
			Therefore, 
			% TODO 
			% QUESTION: last equality?
			\begin{eqnarray*}
				\ds \int_{\mathbb{R}^n} |h-k|\mathrm{d} x&=&\ds \int_{\mathbb{R}^n} (h-k)\mathrm{d} x\\&\leq& (1+\varepsilon) \int_{\mathbb{R}^n} f \mathrm{d} x -(1-O_{n,\lambda}(\sqrt{\varepsilon}))\int_{\mathbb{R}^n} f\mathrm{d} x=O_{n,\lambda}(\sqrt{\varepsilon})\int_{\mathbb{R}^n} f\mathrm{d} x,
			\end{eqnarray*}
			which implies -- combined with \eqref{h-k-l} -- that
			$$\ds \int_{\mathbb{R}^n} |h-l|\mathrm{d} x\leq \ds \int_{\mathbb{R}^n} (|h-k|+|k-l|)\mathrm{d} x\leq O_{n,\lambda}(\sqrt{\varepsilon})\int_{\mathbb{R}^n} f\mathrm{d} x.$$
			Now, the latter relation together with \eqref{f-g} implies the existence of a constant $C = C(n, \lambda)>0$ such that, up to translations, 
			$$	\ds \int_{\mathbb{R}^n} (|f-l|+|g-l|+|h-l|)\mathrm{d} x\leq C\sqrt{\varepsilon}\int_{\mathbb{R}^n}f\mathrm{d} x,$$
			which, in particular, gives that 
			$$	\ds \int_{\mathbb{R}^n} (|f-g|+|h-g|)\mathrm{d} x\leq C\sqrt{\varepsilon}\int_{\mathbb{R}^n}f\mathrm{d} x.$$
			This estimate is exactly  \eqref{u-v-w}, which concludes the proof. 
			%$\int_{\mathbb{R}^n} f \mathrm{d} x=\int_{\mathbb{R}^n} g \mathrm{d} x$
			% and a log-concave function $\ell:\R^n \to \R_+$, such that
			%    Applying Theorem 1 derived from  Figalli, van Hintum, and Tiba \cite{FigallivanHintumTiba_25} to the functions $u,v$ and $w$, we obtain that there exists a constant $C(n,    \lambda)>0$ 
			% \begin{align*}
				%   \int_{\R^n} | u(x-x_0) - a v(x)| \mathrm{d} x 
				%   &\leq  \int_{\R^n} | u(x-x_0) - \ell(x)| \mathrm{d} x + \int_{\R^n} |v(x) - \ell(x)| \mathrm{d} x \\
				%   &= C(n,\lambda)
				% \sqrt{\varepsilon} \int_{\R^n}  u =  C(n,\lambda)
				% \int_{\R^n}  u (x) \mathrm{d} x \cdot  \delta^{\sf HC}_{p ,t,\alpha,\beta}(g)^\frac{1}{2} . 
				% \end{align*} 
		\end{proof}
		
		We now recall the main result from B\"or\"oczky and De  \cite[Theorem 1.4]{BoroczkyDe} which gives a  more precise estimate as \eqref{u-v-w}, but requiring the \textit{log-concavity} of the function $w$: 
		%	\ref{Theorem2}: 
		
		\begin{theorem}[B\"or\"oczky and De  \cite{BoroczkyDe}] \label{Theorem_Boroczky-De}
			For every $n \in \mathbb{N}$, there exists a constant $c = c(n) >0$ with the following property. Let $\tau \in(0,1 / 2]$, $\tau\leq \lambda\leq 1-\tau$, and $u, v, w: \mathbb{R}^n \rightarrow \mathbb{R}_{+}$ be integrable functions such that $w$ is log-concave, and \eqref{PL-feltetel} and \eqref{PL-epsilon} hold for some $\varepsilon\in (0,1].$
			%	\begin{itemize}
				%		\item $w(\lambda x+(1-\lambda) y) \geq u(x)^\lambda v(y)^{1-\lambda}$ ~ for all $x, y \in \mathbb{R}^n$, and
				%		\item $\ds\int_{\mathbb{R}^n} w \leq(1+\varepsilon) \left(\int_{\mathbb{R}^n} u \right)^\lambda \left(\int_{\mathbb{R}^n} v \right)^{1-\lambda}$ for some $\varepsilon \in (0,1].$
				%	\end{itemize}
			Then, there exists a translation vector $x_0 \in \R^n$  such that
			$$
			\int_{\mathbb{R}^n}|v(x-x_0) - a^{-\lambda}w(x) | \mathrm{d} x \leq c(n) \left(\frac{\varepsilon}{\tau}\right)^{1/19} \int_{\mathbb{R}^n} v(x) \mathrm{d} x,
			$$
			where $a = \int_{\R^n} u / \int_{\R^n} v$.
		\end{theorem}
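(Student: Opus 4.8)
The plan is to prove the estimate by the classical \emph{level-set} route: transfer the near-equality in the Pr\'ekopa--Leindler inequality into a family of near-equalities in the Brunn--Minkowski inequality for the super-level sets of $u,v,w$, and then invoke a quantitative stability theorem for Brunn--Minkowski, using crucially that the log-concavity of $w$ makes these super-level sets convex. Note that one cannot simply quote Theorem~\ref{Theorem_Figalli_etal}: although its exponent $\tfrac12$ is better, its constant $C(n,\lambda)$ is not known to stay bounded (nor to degenerate in a controlled way) as $\lambda\to0,1$, and it does not exploit that $w$ is log-concave; the whole point of the present statement is its \emph{explicit} dependence on $\tau$, bought at the price of the worse exponent $\tfrac1{19}$.

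First I would normalise. Arguing as at the start of the proof of Theorem~\ref{Theorem_Figalli_etal} — replace $u$ by $a^{\lambda-1}u$ and $v$ by $a^{\lambda}v$, leave $w$ unchanged, and then dilate $\mathbb R^n$ — one reduces to the case $\int_{\mathbb R^n}u=\int_{\mathbb R^n}v=1$ (so that $a=1$ and $\int_{\mathbb R^n}w=1+\varepsilon$), the claim becoming: there is $x_0\in\mathbb R^n$ with $\int_{\mathbb R^n}|v(x-x_0)-w(x)|\,\mathrm dx\le c(n)(\varepsilon/\tau)^{1/19}$, the hypotheses now being that $w$ is log-concave and $w(\lambda x+(1-\lambda)y)\ge u(x)^{\lambda}v(y)^{1-\lambda}$ for all $x,y\in\mathbb R^n$. (If $\varepsilon$ is not small one may take $c(n)$ large, since the left-hand side is always at most $\int_{\mathbb R^n}v+\int_{\mathbb R^n}w$.)

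Next, for $r>0$ set $A_r=\{u\ge r\}$, $B_r=\{v\ge r\}$, $C_r=\{w\ge r\}$; since $w$ is log-concave each $C_r$ is a convex body and the family $(C_r)_{r>0}$ is nested and decreasing, while the pointwise hypothesis gives the Minkowski inclusions $\lambda A_s+(1-\lambda)B_t\subseteq C_{s^{\lambda}t^{1-\lambda}}$ for all $s,t>0$. In particular $w$ dominates the Pr\'ekopa--Leindler transform $M(z)=\sup\{u(x)^{\lambda}v(y)^{1-\lambda}:\lambda x+(1-\lambda)y=z\}$, and since $\int_{\mathbb R^n}M\ge1$ by Pr\'ekopa--Leindler we obtain $0\le\int_{\mathbb R^n}(w-M)\le\varepsilon$. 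From this smallness, by the layer-cake formula combined with Brunn--Minkowski $|\lambda A_r+(1-\lambda)B_r|^{1/n}\ge\lambda|A_r|^{1/n}+(1-\lambda)|B_r|^{1/n}$ and the elementary bound $(\lambda a^{1/n}+(1-\lambda)b^{1/n})^{n}\ge a^{\lambda}b^{1-\lambda}$, one extracts that the volume profiles $r\mapsto|A_r|$ and $r\mapsto|B_r|$ are $L^1$-close and that, for a set of levels $r$ of large measure, the pair $(A_r,B_r)$ is an approximate equality case of Brunn--Minkowski, the total error being controlled by $\varepsilon/\tau$; the factor $1/\tau$ enters because the gap in $(\lambda a^{1/n}+(1-\lambda)b^{1/n})^{n}\ge a^{\lambda}b^{1-\lambda}$ degenerates linearly as $\lambda\to0,1$. (This extraction itself needs a one-dimensional Pr\'ekopa--Leindler stability for the volume profiles and is already part of the work.)

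For each such level $r$ one then applies a sharp quantitative Brunn--Minkowski stability theorem (valid for arbitrary measurable sets) to the convex body $C_r$ and the combination $\lambda A_r+(1-\lambda)B_r$, producing a translation vector $z(r)$ with $|C_r\,\triangle\,(B_r+z(r))|$ bounded by a fixed power of the local deficit. The decisive step is the globalisation: one must (a) choose $z(r)$ \emph{independently of $r$}, up to a comparable error — here the log-concavity of $w$ is indispensable, as the bodies $C_r$ are convex and nested and hence have a well-behaved barycentre map $r\mapsto\mathrm{bar}(C_r)$ — and (b) integrate these level-wise bounds over all $r>0$, controlling the small- and large-$r$ tails via $\int u=\int v=1$. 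Each of (a), (b) costs powers of the deficit, and the careful accounting of these losses, together with the $\tau$-dependence carried from the previous paragraph, is what yields the stated exponent $\tfrac1{19}$ and a purely dimensional $c(n)$. I expect exactly this passage — from level-wise closeness with level-dependent translations to a single $L^1$-bound with one vector $x_0$ — to be the main obstacle; it is the technical heart of \cite{BoroczkyDe}.
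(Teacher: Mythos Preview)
The paper does not prove Theorem~\ref{Theorem_Boroczky-De} at all: it is quoted verbatim as \cite[Theorem~1.4]{BoroczkyDe} and used as a black box (together with Remark~\ref{remark-constant} on the form of the constant). So there is no ``paper's own proof'' to compare against; your proposal is in effect a sketch of the argument in the cited reference rather than of anything in the present paper.

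That said, as a summary of the B\"or\"oczky--De strategy your outline is accurate in spirit: normalise to $\int u=\int v=1$, pass to super-level sets via the layer-cake formula, use the Minkowski inclusions $\lambda A_s+(1-\lambda)B_t\subset C_{s^\lambda t^{1-\lambda}}$ together with convexity of the $C_r$ (from log-concavity of $w$), feed the resulting near-equalities into a quantitative Brunn--Minkowski stability theorem, and then globalise the level-dependent translations to a single $x_0$. The $1/\tau$ factor and the exponent $1/19$ indeed arise from the degeneration of the AM--GM gap as $\lambda\to0,1$ and from the accumulated power losses in the globalisation step. If your goal is only to reproduce what the present paper does, the correct answer is simply to cite \cite{BoroczkyDe}; if you intend to reprove the result, be aware that the ``globalisation'' you flag as the main obstacle is genuinely long and technical in \cite{BoroczkyDe}, and your sketch does not yet contain the mechanism (comparison of barycentres of nested convex bodies across levels, control of bad levels) that makes it work.
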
 
		
		\begin{remark}\rm\label{remark-constant} The constant $c(n)$ is $c^n n^n$, where $c>0$ is a universal constant,  	see  \cite{BoroczkyDe}. 
		\end{remark}
		
		In this context, another relevant result is due to Figalli and Ramos \cite[Theorem 3]{FigalliRamos}, which states a sharp stability estimate for \textit{radially symmetric} and \textit{log-concave} functions. More precisely, we have: 
		
		\begin{theorem}[Figalli and Ramos \cite{FigalliRamos}] \label{Theorem_Figalli_Ramos}
			For every $n \in \mathbb{N}$, there exists a dimensional  constant $ C(n) >0$ with the following property.
			Let $0<\lambda<1$, and $u, v, w: \mathbb{R}^n \rightarrow \mathbb{R}_{+}$ be radially symmetric functions such that  either $w$ is log-concave, or both $u$ and $v$ are log-concave. Furthermore, suppose that \eqref{PL-feltetel} and \eqref{PL-epsilon} hold for some $\varepsilon\in (0,1].$
			%		\begin{itemize}
				%			\item  $w(\lambda x + (1-\lambda)y) \geq u(x)^{\lambda} v(y)^{1-\lambda}$ ~ for all $x,y \in \R^n,$ and 
				%			\item  $ \ds \int_{\R^n} w \leq(1+\varepsilon)\left(\int_{\R^n} u\right)^{\lambda} \left(\int_{\R^n} v\right)^{1-\lambda} $ for some $\varepsilon>0$.
				%		\end{itemize}  
			Then there exist a radially symmetric log-concave function $h: \mathbb{R}^n \rightarrow \mathbb{R}_{+}$ such that
			$$
			\begin{aligned}
				\int_{\R^n}| u(x) - a^{1-\lambda}  h\left(x\right)| \mathrm{d} x & \leq C(n)\left(\frac{\varepsilon}{\tau}\right)^{1 / 2} \int_{\mathbb{R}} u(x) \mathrm{d} x, \\
				\int_{\R^n}|v(x)- a^{-\lambda}h\left(x\right)| \mathrm{d} x & \leq C(n)\left(\frac{\varepsilon}{\tau}\right)^{1 / 2} \int_{\mathbb{R}} v(x)  \mathrm{d} x, \\
				\int_{\mathbb{R}}|w(x)-h(x)|  \mathrm{d} x & \leq C(n)\left(\frac{\varepsilon}{\tau}\right)^{1 / 2} \int_{\R^n} w(x)  \mathrm{d} x,
			\end{aligned}
			$$
			where $a = \int_{\R^n} u / \int_{\R^n} v$ and $\tau= \min(\lambda, 1-\lambda)$. 
			%Moreover, the exponent $1/2$ is sharp.
		\end{theorem}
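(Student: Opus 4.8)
The plan is to run, in the radial setting, the same reduction that proved Theorem~\ref{Theorem_Figalli_etal}: normalise so that $a=1$, invoke the sharp radial Pr\'ekopa--Leindler stability of Figalli and Ramos as the single hard input, and then transfer the closeness from two of the functions to the third via a $\min\{f,g\}$-bookkeeping. Concretely, I would set $f=a^{\lambda-1}u$, $g=a^{\lambda}v$, $h=w$; then $\int_{\R^n}f=\int_{\R^n}g$, the hypothesis \eqref{PL-feltetel} becomes $h(\lambda x+(1-\lambda)y)\ge f(x)^{\lambda}g(y)^{1-\lambda}$, and \eqref{PL-epsilon} becomes $\int_{\R^n}h=(1+\varepsilon)\int_{\R^n}f$. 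Scaling by a positive constant preserves radial symmetry and log-concavity, so $f,g,h$ are radial and the dichotomy ``$h$ log-concave, or both $f,g$ log-concave'' is inherited; since radial functions are centred, no translations will enter, which is why the conclusion has none. Replacing $(u,v,\lambda)$ by $(v,u,1-\lambda)$ if needed, one may also assume $\lambda\le 1/2$, so that $\tau=\lambda$. By \cite{FigalliRamos} there is then a radial log-concave $l\colon\R^n\to\R_+$ with $\int_{\R^n}(|f-l|+|g-l|)\,\mathrm{d}x\le C(n)(\varepsilon/\tau)^{1/2}\int_{\R^n}f\,\mathrm{d}x$; this estimate, and the sharp exponent $\tfrac12$ peculiar to radial functions, is the substantive content, and everything below is soft.

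Next I would propagate to $h$. The function $k=\min\{f,g\}$ is radial, and taking $x=y=z$ in \eqref{PL-feltetel} gives $h(z)\ge f(z)^{\lambda}g(z)^{1-\lambda}\ge\min\{f(z),g(z)\}=k(z)$, so $h\ge k$ pointwise. Since $f-k=\max\{f-g,0\}\le|f-g|\le|f-l|+|g-l|$ (and similarly $|k-l|\le|f-l|+|g-l|$), the previous estimate yields both $\int_{\R^n}|k-l|\le C(n)(\varepsilon/\tau)^{1/2}\int_{\R^n}f$ and $\int_{\R^n}k\ge\bigl(1-C(n)(\varepsilon/\tau)^{1/2}\bigr)\int_{\R^n}f$. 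Hence
\[
\int_{\R^n}|h-k|=\int_{\R^n}(h-k)=\int_{\R^n}h-\int_{\R^n}k\le\Bigl(\varepsilon+C(n)\bigl(\tfrac{\varepsilon}{\tau}\bigr)^{1/2}\Bigr)\int_{\R^n}f\le C(n)\bigl(\tfrac{\varepsilon}{\tau}\bigr)^{1/2}\int_{\R^n}f,
\]
where I used $\varepsilon\le\sqrt{\varepsilon}\le\sqrt{\varepsilon/\tau}$ (as $\varepsilon\le1$, $\tau\le1$), and then $\int_{\R^n}|h-l|\le C(n)(\varepsilon/\tau)^{1/2}\int_{\R^n}f$ by the triangle inequality. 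Thus $f$, $g$ and $h$ all lie within $C(n)(\varepsilon/\tau)^{1/2}\int_{\R^n}f$ of the single radial log-concave function $l$.

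Finally I would undo the normalisation, taking the function $h$ of the statement to be $l$: since $u=a^{1-\lambda}f$ and $\int_{\R^n}u=a^{1-\lambda}\int_{\R^n}f$, multiplying the bound for $\int_{\R^n}|f-l|$ by $a^{1-\lambda}$ gives $\int_{\R^n}|u-a^{1-\lambda}l|\le C(n)(\varepsilon/\tau)^{1/2}\int_{\R^n}u$; the $v$-estimate follows the same way with $a^{-\lambda}$ in place of $a^{1-\lambda}$; and $\int_{\R^n}|w-l|\le C(n)(\varepsilon/\tau)^{1/2}\int_{\R^n}f\le C(n)(\varepsilon/\tau)^{1/2}\int_{\R^n}w$ since $\int_{\R^n}f=\int_{\R^n}w/(1+\varepsilon)\le\int_{\R^n}w$. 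These are exactly the three asserted inequalities. The main obstacle is entirely the radial stability estimate producing $l$, which I would quote from \cite{FigalliRamos} rather than reprove; in the ``$w$ log-concave'' branch one should check that the same core estimate still delivers $l$ with the roles of $k=\min\{f,g\}$ and the sup-convolution of $f,g$ interchanged, and the only remaining points to verify are that $k$ keeps enough structure (radial symmetry, convex superlevel sets) for the cited estimate to apply and the elementary absorption $\varepsilon\le\sqrt{\varepsilon/\tau}$.
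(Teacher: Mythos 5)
The statement you are proving is quoted verbatim in the paper as Figalli--Ramos \cite[Theorem~3]{FigalliRamos}; the paper offers no proof of it, so there is nothing in the paper to compare your argument against. What you have written is a re-derivation modelled on the paper's proof of Theorem~\ref{Theorem_Figalli_etal}, and it has two problems. First, you do not identify the ``core estimate'' from \cite{FigalliRamos} you intend to cite. If their Theorem~3 already produces a single radial log-concave $l$ with all three $L^1$-bounds (which is what the paper asserts by stating the result in exactly that form), your propagation of the $w$-bound from the $f,g$-bounds is superfluous; if it gives something strictly weaker, you must say precisely what that weaker statement is before the argument has any content. As it stands, you are reconstructing a proof of a black-box theorem from a guessed intermediate.

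Second, and more substantively, your propagation step only works in the branch where both $u$ and $v$ are log-concave. You use $k=\min\{f,g\}$, the pointwise bound $h\ge k$ from \eqref{PL-feltetel}, and the $f,g$-closeness to $l$ to control $\int|h-k|$ and hence $\int|h-l|$. This produces the SAME $l$ for all three functions, which is what the theorem needs. But in the branch where only $w$ is log-concave, there is no reason the core estimate you invoke should deliver $l$ close to $f$ and $g$ in the first place, and your $\min$-bookkeeping cannot run in reverse: $h\ge k$ does not let you bound $\int|f-h|$ or $\int|g-h|$ without further structure. You acknowledge this at the end (``one should check that the same core estimate still delivers $l$ with the roles of $k$ and the sup-convolution interchanged'') but do not carry it out, so the ``$w$ log-concave, $u,v$ not'' case is genuinely unproved in your write-up. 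The elementary parts — the normalisation $f=a^{\lambda-1}u$, $g=a^{\lambda}v$, $h=w$ preserving radiality and log-concavity, the reduction to $\lambda\le 1/2$, and the absorption $\varepsilon\le\sqrt{\varepsilon/\tau}$ for $\varepsilon,\tau\le 1$ — are all fine.
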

		
		In addition to the above results, throughout the whole paper, we shall use the following integral formula 
		\begin{equation}\label{integral-formula}
			\int_{\R^n} e^{-M|x|^{q}} \mathrm{d} x =\Gamma\left(\frac{n}{q} +1\right) \omega_n \cdot M^{-\frac{n}{q}},
		\end{equation}
		for every $q >1$ and $M>0$.

		\section{Proof of Theorems \ref{main-theorem-HC} and \ref{Theorem2}}\label{section-30}
		
		In this section we prove stability of the hypercontractivity estimates for the Hopf--Lax semigroup $(\mathbf{Q}_{t})_{t \geq 0}$, i.e., Theorems \ref{main-theorem-HC} and \ref{Theorem2}. 
		Recall that for a function $g:\R^n\to \mathbb R$,   $(\mathbf{Q}_t)_{t \geq 0}$ is the family of  operators given by 
		\begin{equation}\label{Q-t-definition}
			\left\{
			\begin{array}{l}
				\mathbf{Q}_t g(x)=\inf _{y \in \R^n}\left\{g(y)+\frac{|x-y|^{p^{\prime}}}{p^{\prime} t^{p^{\prime}-1}}\right\}, \quad t>0, \quad x  \in \mathbb{R}^n, \\
				\mathbf{Q}_0 g(x)=g(x), \quad x  \in \mathbb{R}^n.
			\end{array}
			\right.
		\end{equation} 
		It can be proven that under a certain regularity assumptions on $g$,  the function $u(x, t)=\mathbf{Q}_{t} g(x)$ is the viscosity solution to the   Hamilton--Jacobi initial value problem 
		$$
		\left\{
		\begin{array}{l}
			\frac{\partial u}{\partial t}(x, t) + \frac{|\nabla u(x, t)|^p}{p} =0, \quad t>0, \quad x  \in \mathbb{R}^n ,\\
			u(x, 0)=g(x), \quad x  \in \mathbb{R}^n,
		\end{array}\right.
		$$
		see Evans \cite{Evans}.

		\subsection{Proof of Theorem \ref{main-theorem-HC}.}  
		We divide the proof into two steps. 
		
		\textbf{Step 1:} \textit{proof of \eqref{stab-HC}.} 
		Let $0<\alpha<\beta$ and $\lambda \in (0,1)$. By the definition of the Hopf--Lax semigroup $(\mathbf{Q}_{t})_{t> 0}$, for all $t >0$ and all $x,y \in \R^n$, we have that
		\begin{equation}\label{Q-t-estimate}
			\mathbf{Q}_t g(x) \leq g\left(x+\left(\frac{1-\lambda}{\lambda}\right) y\right)+ \left(\frac{1-\lambda}{\lambda}\right)^{p^{\prime}} \frac{|y|^{p^{\prime}}}{p^{\prime} t^{p^{\prime}-1}}.
		\end{equation}
		Multiplying the above relation by $\alpha$ and taking exponentials of both sides, we obtain that 
		\begin{equation*}
			e^{\alpha \mathbf{Q}_t g(x)} \cdot e^{-\alpha \left(\frac{1-\lambda}{\lambda}\right)^{p^{\prime}}  \frac{|y|^{p^{\prime}}}{p^{\prime} t^{p^{\prime}-1}}} 
			\leq e^{\alpha g\left(x+\left(\frac{1-\lambda}{\lambda}\right) y\right) } 
			=  e^{\alpha g\left(\frac{1}{\lambda}\left(\lambda x+ (1-\lambda)y\right)\right)} .
		\end{equation*}
		
		We first assume that  $\alpha\leq \frac{\beta}{2}$; thus, choosing $\lambda \coloneqq \frac{\alpha}{\beta} \in (0,\frac{1}{2}]$ yields
		\begin{equation*}
			\left(e^{\beta \mathbf{Q}_t g(x)}\right)^\lambda \cdot \left(e^{- \beta \left(\frac{\beta-\alpha}{\alpha}\right)^{p^{\prime}-1}  \frac{|y|^{p^{\prime}}}{p^{\prime} t^{p^{\prime}-1}}}\right)^{1-\lambda} 
			\leq e^{\alpha g\left(\frac{\beta}{\alpha}\left(\lambda x+ (1-\lambda)y\right)\right)} .
		\end{equation*}
		Therefore, considering the functions  $u,v,w: \R^n \to \R_+$, 
		\begin{equation}\label{definition-u-v-w}
			u(x)=e^{\beta \mathbf{Q}_{t} g(x)}, \quad \quad
			v(x)=e^{-\theta_0 \cdot \frac{|x|^{p^\prime}}{p^{\prime}}}, \quad \quad
			w(x)=e^{\alpha g\left(\frac{\beta}{\alpha} x\right)},
		\end{equation}
		where
		$$
		\theta_0=\beta\left(\frac{\beta-\alpha}{\alpha t}\right)^{p^{\prime}-1} ,
		$$
		it follows that 
		\begin{equation} \label{condition_PL_1}
			u(x)^{\lambda}  v(y)^{1-\lambda} 
			\leq  w( \lambda x+(1-\lambda) y) .
		\end{equation}
		In addition, all functions $u,v$ and $w$ are integrable by assumption.

		% Let us consider the functions  $u,v,w: \R^n \to \R_+$ given by 
		% \begin{equation}\label{definition-u-v-w}
			% 	u(x)=e^{\beta \mathbf{Q}_{t} g(x)}, \quad \quad
			% 	v(x)=e^{-\theta_0 \cdot \frac{|x|^{p^\prime}}{p^{\prime}}}, \quad \quad
			% 	w(x)=e^{\alpha g\left(\frac{\beta}{\alpha} x\right)},
			% \end{equation}
		% where
		% $$
		% \theta_0=\beta\left(\frac{\beta-\alpha}{\alpha t}\right)^{p^{\prime}-1} .
		% $$
		% By assumption, it follows that all functions $u,v$ and $w$ are integrable. 
		
		% Let $\lambda=\frac{\alpha}{\beta} \in (0,1)$. Then, one has that
		% $$
		% w( \lambda x+(1-\lambda) y) \geq u(x)^{\lambda}  v(y)^{1-\lambda}, \quad \forall x, y \in \mathbb{R}^n.
		% $$
		% Indeed, by the definition of $\mathbf{Q}_{t} g$, we have that
		% $$
		% \mathbf{Q}_t g(x) \leq g\left(x+\left(\frac{\beta}{\alpha}-1\right) y\right)+\frac{t^{1-p^{\prime}}}{p^{\prime}} \left(\frac{\beta}{\alpha}-1\right)^{p^{\prime}} |y|^{p^{\prime}}.
		% $$
		% Therefore, we obtain that
		% \begin{align*}
			% 	u(x)^{\lambda}  v(y)^{1-\lambda} &= e^{\alpha \mathbf{Q}_t g(x)} \cdot e^{-\theta_0\left(1-\frac{\alpha}{\beta}\right)  \frac{|y|^{p^{\prime}}}{p^{\prime}}} 
			% 	\leq e^{\alpha g\left(x+\left(\frac{\beta}{\alpha}-1\right) y\right) } \\
			% 	&=  e^{\alpha g\left(\frac{\beta}{\alpha}\left(\frac{\alpha}{\beta} x+\left(1-\frac{\alpha}{\beta}\right) y\right)\right)} = w( \lambda x+(1-\lambda) y) .
			% \end{align*}
		Next, let $\varepsilon \coloneqq \delta^{\sf HC}_{p ,t,\alpha,\beta}(g) $. Then, by \eqref{HCI_deficit}, it follows that 
		\begin{equation}  \label{eq_3}
			\left(\mathcal{C}_{p, t, \alpha, \beta} \right)^\alpha
			\left\|e^g\right\|^\alpha_{\alpha} = (1+\varepsilon)\left\|e^{\mathbf{Q}_{t} g}\right\|_{\beta}^\alpha  ,
		\end{equation}
		which, in turn, is equivalent to the relation
		\begin{equation}  \label{eq_4}
			\int_{\R^n} w = (1+\varepsilon) \left(\int_{\R^n} u\right)^{\lambda}\left(\int_{\R^n} v\right)^{1-\lambda} ,
		\end{equation}
		with $\lambda=\frac{\alpha}{\beta}$.
		Indeed,
		by using the integral formula \eqref{integral-formula},
		we obtain that
		\begin{equation} \label{eq_v}
			\int_{\R^n} v = \int_{\R^n} e^{-\frac{\theta_0}{p^{\prime}}|x|^{p^\prime}} \mathrm{d}x = \Gamma\left(\frac{n}{p^{\prime}} +1\right)  \omega_{n} \left(\frac{\theta_0}{p^{\prime}}\right)^{-\frac{n}{p^\prime}} .
		\end{equation}
		On the other hand, by a change of variables, we have that
		$$
		\int_{\R^n} w =\int_{\R^n} e^{\alpha g\left(\frac{\beta}{\alpha} x\right)} \mathrm{d} x = \left(\frac{\alpha}{\beta}\right)^n\left\|e^g\right\|_{\alpha}^\alpha .
		$$
		Accordingly, a straightforward calculation confirms the equivalence between \eqref{eq_3} and \eqref{eq_4}.
		%$$\left\|e^g\right\|_{\alpha}^\alpha  = (1+\varepsilon)\left\|e^{\mathbf{Q}_{t} g}\right\|_{\beta}^\alpha  $$
		
		Now, we are in the position to  apply Theorem \ref{Theorem_Figalli_etal} to the functions $u,v$ and $w$, obtaining in particular that there exist $x_0\in \mathbb R^n$ and  a constant $C_0=C_0(n,
		\lambda)>0$ such that
		\begin{align*}
			\int_{\mathbb{R}^n}|a^{-\lambda}w(x) -   v(x-x_0)| \mathrm{d} x \leq C_0 \sqrt{\varepsilon} \int_{\mathbb{R}^n} v \mathrm{d} x,
		\end{align*} 
		where 
		$$a =  \frac{\int_{\R^n} u}{\int_{\R^n} v} = \frac{\int_{\R^n} e^{\beta \mathbf{Q}_{t} g(x)} \mathrm{d} x}{\int_{\R^n} e^{-\theta_0 \cdot \frac{|x|^{p^\prime}}{p^\prime}} \mathrm{d} x}.
		$$
		The latter inequality is equivalent to 
		\begin{equation}\label{stab-HC-11}
			\int_{\R^n}\Big|e^{-\theta_0 \cdot \frac{|x-x_0|^{p^\prime}}{p^\prime}}- a^{-\frac{\alpha}{\beta}}  e^{\alpha g(\frac{\beta}{\alpha}x)}\Big| \mathrm{d} x  \leq C_0  \, \delta^{\sf HC}_{p ,t,\alpha,\beta}(g)^\frac{1}{2}  \int_{\R^n} e^{-\frac{\theta_0}{p^{\prime}}|x|^{p^\prime}} \mathrm{d}x.
		\end{equation}
		According to	\eqref{eq_v} and a change of variables in the left hand side of \eqref{stab-HC-11} (note also that $\theta_0=\left({\beta}/{\alpha}\right)^{p'}\theta $), we obtain the estimate \eqref{stab-HC} by choosing 
		\begin{equation}\label{C-constant}
			C(n,p,  \alpha,\beta,t):= C_0\Gamma\left(\frac{n}{p^{\prime}} +1\right)  \omega_{n} \cdot\left(p^{\prime}\right)^{\frac{n}{p^\prime}} \alpha^{-\frac{n}{p^\prime}}  \left(\frac{\beta-\alpha}{\beta t} \right)^{-\frac{n}{p}}, 
		\end{equation}
		where $C_0=C_0(n,
		\alpha/\beta)>0$ is from  Theorem \ref{Theorem_Figalli_etal}. 
		
		In the complementary case when $\alpha> \frac{\beta}{2}$, we can obtain  inequality \eqref{condition_PL_1} by starting from relation 
		$$
		\mathbf{Q}_t g(x) \leq g\left(x+\left(\frac{\lambda}{1-\lambda}\right) y\right)+ \left(\frac{\lambda}{1-\lambda}\right)^{p^{\prime}} \frac{|y|^{p^{\prime}}}{p^{\prime} t^{p^{\prime}-1}},
		$$ instead of \eqref{Q-t-estimate},  
		and choosing $\lambda \coloneqq 1-\frac{\alpha}{\beta} \in (0,\frac{1}{2})$. The rest of the proof is similar to the one above. We leave the details to the interested reader.

		\textbf{Step 2:} \textit{sharpness of the exponent $\frac{1}{2}$ in \eqref{stab-HC}.} Let us choose $\alpha=1=t$ and $\beta=p>1$. With these choices, one has that
		$\theta=(p')^{1-p'}$.
		%	 $\theta_0=p\left(p-1\right)^{p^{\prime}-1}=p'\left(p-1\right)^{p^{\prime}}$. 
		
		For every $0<\varepsilon<\frac{1}{p'}-\frac{1}{(p')^{p'}}$, we introduce the family of functions  
		\begin{equation}\label{g-epsilon}
			g_\varepsilon(x)=-\left(\frac{1}{(p')^{p'}}+\varepsilon\right)|x|^{p'},\ \ x\in \mathbb R^n.
		\end{equation}
		%		It is clear that $g_\varepsilon\in \mathcal F_{t_0}(\mathbb R^n).$ 
		We claim that 
		\begin{equation}\label{1-claim}
			\delta^{\sf HC}_{p ,1,1,p}(g_\varepsilon)=\frac{n}{2}p^{\frac{p+1}{p-1}}(p-1)^\frac{p-3}{p-1}\varepsilon^2 +o(\varepsilon^2).
		\end{equation}
		For convenience, let 
		\begin{equation}\label{b-deinfition}
			b_\varepsilon:=p'\left(\frac{1}{(p')^{p'}}+\varepsilon\right).
		\end{equation}
		By the range of $\varepsilon>0$, it follows that $b_\varepsilon<1$. First, an elementary computation shows that 
		\begin{equation}\label{g-eps}
			\|e^{g_\varepsilon}\|_1=\Gamma\left(\frac{n}{p^{\prime}} +1\right) \omega_n \left(\frac{b_\varepsilon}{p'}\right)^{-\frac{n}{p^\prime}}.
		\end{equation}
		Moreover, a direct calculation gives that 
		$$\mathbf{Q}_1g_\varepsilon(x)=-\frac{b_\varepsilon}{p'(1-b_\varepsilon^{p-1})^{p'-1}}|x|^{p'},\ \ x\in \mathbb R^n,$$
		see also Balogh, Don and Krist\'aly \cite[relation (5.16)]{BDK}, which implies that
		\begin{equation}\label{Q1-g-eps}
			\|e^{\mathbf{Q}_1g_\varepsilon}\|_p^p=\Gamma\left(\frac{n}{p^{\prime}} +1\right) \omega_n \left(\frac{pb_\varepsilon}{p'(1-b_\varepsilon^{p-1})^{p'-1}}\right)^{-\frac{n}{p^\prime}}.
		\end{equation}
		Using  expressions \eqref{g-eps}  and \eqref{Q1-g-eps}, we obtain 
		$$\delta^{\sf HC}_{p ,1,1,p}(g_\varepsilon)=\mathcal{C}_{p, 1, 1, p} \frac{
			\left\|e^{g_\varepsilon}\right\|_1 }{\left\|e^{\mathbf{Q}_1 g_\varepsilon}\right\|_{p}} -1=p^{-\frac{n}{p}}(p-1)^\frac{n}{p'p}b_\varepsilon^{-\frac{n}{(p')^2}}(1-b_\varepsilon^{p-1})^{-\frac{n}{p^2}}-1.$$ Relation \eqref{b-deinfition} and a straightforward asymptotic analysis directly imply the validity of \eqref{1-claim}. 
		% 	In addition, since $b_\varepsilon \to (p')^{1-p'}$ as $\varepsilon \to 0,$ we have that
		% 	\begin{equation}\label{Q-eps-limit}
			% 		\lim_{\varepsilon \to 0}\|e^{\mathbf{Q}_1g_\varepsilon}\|_p^p=\Gamma\left(\frac{n}{p^{\prime}} +1\right) \omega_n (p-1)^{-n}.
			% 	\end{equation}
		
		In the sequel, we focus on the left hand side of \eqref{stab-HC}; in fact, according to \eqref{1-claim}, 
		%	 and \eqref{stab-HC-11} -- the latter being equivalent to \eqref{stab-HC} --
		we complete the proof once we are able to prove the existence of a universal constant $C>0$ such that  for every $x_0\in \mathbb R^n$ and $\varepsilon>0$ small enough, 
		\begin{equation}\label{epsilon-sharp}
			\int_{\R^n}\Big|e^{-\theta \cdot \frac{|x-x_0|^{p^\prime}}{p^\prime}}- a^{-\frac{1}{p}}  e^{g_\varepsilon(x)}\Big| \mathrm{d} x  \geq C \varepsilon,
		\end{equation}
		where $$a= \frac{\int_{\R^n} e^{p \mathbf{Q}_{1} g_\varepsilon(x)} \mathrm{d} x}{\int_{\R^n} e^{-\theta_0 \cdot \frac{|x|^{p^\prime}}{p^\prime}} \mathrm{d} x}=\left(\frac{b_\varepsilon}{(p-1)^{p'-1}(1-b_\varepsilon^{p-1})^{p'-1}}\right)^{-\frac{n}{p'}}.$$

		If we denote by
		$$z:=z_\varepsilon=p'b_\varepsilon^{p-1},$$
		%		  $$z:=z_\varepsilon=\frac{b_\varepsilon}{(p-1)^{p'-1}(1-b_\varepsilon^{p-1})^{p'-1}},$$
		then $z_\varepsilon>1$, $z_\varepsilon\to 1$ as $\varepsilon \to 0$, and \eqref{epsilon-sharp} can be equivalently rewritten into the form 
		\begin{equation}\label{z-sharp}
			\int_{\R^n}\Big|e^{-(p')^{-p'} |x-x_0|^{p^\prime}}- \left(\frac{z}{p(1-\frac{z}{p'})}\right)^\frac{n}{p^2}  e^{-(p')^{-p'}z^\frac{1}{p-1} |x|^{p'}}\Big| \mathrm{d} x  \geq \tilde C (z-1),
		\end{equation}
		for every $z>1$ close to 1 (where $\tilde C>0$ is also universal constant). We distinguish two cases, depending on the positions of the peaks of the Gaussian functions appearing in \eqref{z-sharp}.  
		
		\textit{Case 1:} $x_0=0$. In this case we can integrate in spherical coordinates such that relation  \eqref{z-sharp} reduces to $$\int_0^\infty\Big|e^{-(p')^{-p'} r^{p^\prime}}- \left(\frac{z}{p(1-\frac{z}{p'})}\right)^\frac{n}{p^2}   e^{-(p')^{-p'}z^\frac{1}{p-1} r^{p'}}\Big|r^{n-1} \mathrm{d} r  \geq (n\omega_n)^{-1}\tilde C (z-1)$$ for every $z>1$ close to 1. 
		Let $$I(z,r)= \left(\frac{z}{p(1-\frac{z}{p'})}\right)^\frac{n}{p^2}    e^{-(p')^{-p'}z^\frac{1}{p-1} r^{p'}} $$ for every  $r>0$ and $z\geq  1$ close to 1. 
		%		For a fixed $z>1$, it follows that $I(z,t)>I(1,t)$ for every $t\in (0,t_z)$, where $t_z^{p'}=\frac{n}{p'(p-1)^{p'}}\frac{\log z}{z-1}.$ Moreover, we observe that $t_z^{p'}\to \frac{n}{p'(p-1)^{p'}}=:t_0^{p'}$ as $z\to 1$ and $\ds\int_0^\infty I(z,t)\mathrm{d} t=\int_0^\infty I(1,t)\mathrm{d} t$ for every $z\geq 1.$ 
		By Fatou's lemma we have 
		\begin{eqnarray*}
			J&:=&\liminf_{z\to 1}\frac{\ds\int_0^\infty\Big|e^{-(p')^{-p'} r^{p^\prime}}- \left(\frac{z}{p(1-\frac{z}{p'})}\right)^\frac{n}{p^2}   e^{-(p')^{-p'}z^\frac{1}{p-1} r^{p'}}\Big|r^{n-1} \mathrm{d} r}{z-1}
			\\&=&\liminf_{z\to 1}{\ds\int_0^\infty\frac{|I(z,r)-I(1,r)|}{z-1}r^{n-1} \mathrm{d} r}
			\\&\geq& \ds\int_0^{\infty}\liminf_{z\to 1}\frac{|I(z,r)-I(1,r)|}{z-1}r^{n-1} \mathrm{d} r=
			\ds\int_0^{\infty}\Big|\frac{\partial}{\partial z}I(z,r)_{|z=1}\Big|r^{n-1} \mathrm{d} r\\&=&\frac{1}{p}
			\int_0^{\infty}\left|n-(p')^\frac{1}{1-p}r^{p'}\right|e^{-(p')^{-p'} r^{p^\prime}}r^{n-1}\mathrm{d} r=:\tilde C_0; 
		\end{eqnarray*}
		it is clear that $\tilde C_0$ is strictly positive (and  finite),  which proves \eqref{z-sharp} in the case when $x_0= 0.$

		\textit{Case 2:} $x_0\neq 0$. By using Lebesgue's dominated converge theorem, it follows that  
		$$\lim_{z\to 1}\ds\int_{\R^n}\Big|e^{-(p')^{-p'} |x-x_0|^{p^\prime}}- \left(\frac{z}{p(1-\frac{z}{p'})}\right)^\frac{n}{p^2}  e^{-(p')^{-p'}z^\frac{1}{p-1} |x|^{p'}}\Big| \mathrm{d} x= $$$$=\ds\int_{\R^n}\Big|e^{-(p')^{-p'} |x-x_0|^{p^\prime}}-   e^{-(p')^{-p'} |x|^{p'}}\Big| \mathrm{d} x.$$ Since $x_0\neq 0$, we clearly have that the latter integral is strictly positive (and finite), which implies again \eqref{z-sharp}. \hfill $\square$\\
		
		Using Theorem \ref{main-theorem-HC} we can easily characterize the equality case  in the hypercontractivity estimate \eqref{HCI_ineq}, which was first established  in  Balogh, Don and Krist\'aly \cite{BDK} by  optimal mass transportation. 
		
		\begin{corollary}\label{HC=}
			Under the same assumptions as in Theorem  \ref{main-theorem-HC}, equality holds in \eqref{HCI_ineq} for some $g:\mathbb R^n\to \mathbb R$ and $t>0$ if and only if 
			\begin{equation} \label{HCI_extremizers-cor}
				g(x) = C - \left( \frac{\beta - \alpha}{\beta t} \right)^{p'-1}  \frac{\left| x - x_0 \right|^{p'}}{p'} \ \ {for\ a.e.}\ \  x\in \mathbb R^n,
			\end{equation}
			for some $x_0 \in \R^n$ and $C \in \R$.
		\end{corollary}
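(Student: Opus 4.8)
The plan is to read off both implications directly from Theorem \ref{main-theorem-HC}, with only a routine verification needed for the sufficiency direction.

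\emph{Necessity.} Suppose equality holds in \eqref{HCI_ineq} for some $g$ with $e^g\in L^\alpha(\mathbb R^n)$ and some $t>0$. Then $\delta^{\sf HC}_{p,t,\alpha,\beta}(g)=0$ by \eqref{HCI_deficit}, and since the constant $C(n,p,\alpha,\beta,t)$ appearing in Theorem \ref{main-theorem-HC} is finite, the right-hand side of \eqref{stab-HC} vanishes. Hence there is $x_0\in\mathbb R^n$ for which
$$\int_{\R^n}\Big|e^{-\theta\cdot\frac{|x-x_0|^{p^{\prime}}}{p^{\prime}}}-a^{-\frac{\alpha}{\beta}}e^{\alpha g(x)}\Big|\,\mathrm{d} x=0 ,$$
so that $a^{-\alpha/\beta}e^{\alpha g(x)}=e^{-\theta|x-x_0|^{p^{\prime}}/p^{\prime}}$ for a.e.\ $x\in\mathbb R^n$. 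Here $a\in(0,\infty)$: the denominator in \eqref{theta_and_a_definition} is a finite positive Gaussian integral by \eqref{integral-formula}, while the numerator equals $\|e^{\mathbf{Q}_{t} g}\|_\beta^\beta$, which is finite and positive by the hypercontractivity estimate \eqref{HCI_ineq}. Taking logarithms and dividing by $\alpha$, while using that $\theta/\alpha=\big(\tfrac{\beta-\alpha}{\beta t}\big)^{p^{\prime}-1}$ from \eqref{theta_and_a_definition}, we arrive precisely at \eqref{HCI_extremizers-cor} with $C=\tfrac1\beta\log a$.

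\emph{Sufficiency.} Conversely, assume $g$ has the form \eqref{HCI_extremizers-cor}. Since the deficit $\delta^{\sf HC}_{p,t,\alpha,\beta}$ is unchanged under adding a constant to $g$ and under translating $g$, we may take $C=0$ and $x_0=0$, i.e.\ $g(x)=-\big(\tfrac{\beta-\alpha}{\beta t}\big)^{p^{\prime}-1}\tfrac{|x|^{p^{\prime}}}{p^{\prime}}$. A direct computation of the infimum in \eqref{Q-t-definition} (as carried out in \cite[Section 5]{BDK}) shows that $\mathbf{Q}_{t} g$ is again a radial power of the same type; both $\|e^g\|_\alpha^\alpha$ and $\|e^{\mathbf{Q}_{t} g}\|_\beta^\beta$ are then evaluated explicitly via \eqref{integral-formula}, and substituting into \eqref{HCI_deficit} and simplifying yields $\delta^{\sf HC}_{p,t,\alpha,\beta}(g)=0$, that is, equality in \eqref{HCI_ineq}. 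Alternatively, this is exactly the equality characterization \eqref{HCI_extremizers} established in \cite{BDK}.

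\emph{On the difficulty.} There is essentially no obstacle in this argument: necessity is immediate from the finiteness of the constant in Theorem \ref{main-theorem-HC}, and sufficiency is bookkeeping with the Gaussian integral \eqref{integral-formula}. The only minor point requiring care is to guarantee that $a\in(0,\infty)$ so that $\log a$ is well defined, which is ensured by the integrability hypothesis $e^g\in L^\alpha(\mathbb R^n)$ together with the hypercontractivity bound \eqref{HCI_ineq}.
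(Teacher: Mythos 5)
Your proof is correct and follows essentially the same route as the paper: for necessity, the vanishing of the deficit forces the $L^1$-distance in \eqref{stab-HC} to be zero, whence $g$ has the claimed Gaussian form; for sufficiency, one verifies directly (or cites \cite{BDK}) that the extremizers give equality. You merely supply a bit more bookkeeping than the paper's one-line argument (e.g.\ identifying $C=\tfrac1\beta\log a$ and verifying $a\in(0,\infty)$), which is fine.
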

		
		\begin{proof}
			It is easy to check that if $g$ is given by \eqref{HCI_extremizers-cor}, then equality holds in \eqref{HCI_ineq}. Conversely, if equality holds in \eqref{HCI_ineq}, the hypercontractivity deficit vanishes, i.e., $
			\delta^{\sf HC}_{p ,t,\alpha,\beta}(g)=0.$ By the estimate \eqref{stab-HC}, it follows that for a.e. $x\in \mathbb R^n$, one has
			$$e^{-\theta \cdot \frac{|x-x_0|^{p^\prime}}{p^\prime}}- a^{-\frac{\alpha}{\beta}}  e^{\alpha g(x)}=0,$$ for some $a>0$ and $x_0\in \mathbb R^n,$ where $\theta=\alpha\left(\frac{\beta-\alpha}{\beta t}\right)^{p^{\prime}-1}$. By the latter relation we obtain that $g$ has the form given by \eqref{HCI_extremizers-cor}.
		\end{proof}

		%	\begin{remark}\rm 
			%		If instead of \eqref{stab-HC} we have its counterpart
			%		\begin{equation}\label{stab-HC-2}
				%			\int_{\R^n}\Big|e^{-\theta_0 \cdot \frac{|x-x_0|^{p^\prime}}{p^\prime}}- \frac{1}{a}  e^{\alpha g\left(\frac{\beta}{\alpha} x\right)}\Big| \mathrm{d} x  \leq C(n,p,  \alpha,\beta,t)  \textcolor{red}{\int_{\R^n} {e^{\alpha g\left(\frac{\beta}{\alpha} x\right)}} \mathrm{d} x}\cdot   \delta^{\sf HC}_{p ,t,\alpha,\beta}(g)^\frac{1}{2} ,
				%		\end{equation}
			%		then we can prove the sharpness of the exponent $\frac{1}{2}$ in \eqref{stab-HC-2}; the argument is almost identical to the one from the previous proof. 
			%	\end{remark}

		%	On the other hand, due to  B\"or\"oczky and De \cite[Theorem 1.4]{BoroczkyDe}, we can formulate another stability result for the hypercontractivity deficit.

		\subsection{Proof of Theorem \ref{Theorem2}.} Given a function $g:\mathbb R^n\to \mathbb R$, we choose the functions   $u,v,w: \R^n \to \R_+$ as in the proof of Theorem \ref{main-theorem-HC}, see \eqref{definition-u-v-w}. Since $g$ is concave, then $x\mapsto w(x)=e^{\alpha g\left(\frac{\beta}{\alpha} x\right)}$ is log-concave. As in the proof of Theorem  \ref{main-theorem-HC}, see \eqref{condition_PL_1} and \eqref{eq_4},  relations \eqref{PL-feltetel} and \eqref{PL-epsilon} hold, respectively. 
		Therefore, we are in the position to apply Theorem \ref{Theorem_Boroczky-De} (and Remark \ref{remark-constant}), obtaining that  there exist a point $x_0 \in \R^n$ and a universal constant $c >0$ such that
		\begin{equation}\label{u-v-w-B_D-befor}
			\int_{\mathbb{R}^n}|a^{-\lambda}w(x) -   v(x-x_0)| \mathrm{d} x \leq c^n n^n \left({\frac{\delta^{\sf HC}_{p ,t,\alpha,\beta}(g)}{\tau}}\right)^\frac{1}{19} \int_{\mathbb{R}^n} v \mathrm{d} x,
		\end{equation} 
		where $a = \int_{\R^n} u / \int_{\R^n} v$,  $\lambda=\frac{\alpha}{\beta}$ and $\tau=\min \left(\frac{\alpha}{\beta}, 1-\frac{\alpha}{\beta}\right)$. Now, relation \eqref{eq_v} shows that the constant in \eqref{new-stability-Boroczky-De}
		can be chosen to be 
		\begin{equation}\label{C-n-p}
			C(n,p)=c^n n^n(p')^\frac{n}{p^\prime}\Gamma\left(\frac{n}{p^{\prime}} +1\right)  \omega_{n},
		\end{equation}
		which concludes the proof of \eqref{new-stability-Boroczky-De}.
		
		Now, we assume in addition that $g:\mathbb R^n\to \mathbb R$ is radially symmetric. We first prove that the function $x \to \mathbf{Q}_{t} g(x)  $ is also radially symmetric for every $t>0$. 
		Let us fix $t>0$.  The 
		radially symmetry of $g:\mathbb R^n\to \mathbb R$ means that it is invariant under the action of the orthogonal group $O(n),$ i.e., $g(\tau x)=g(x)$ for every $\tau\in O(n)$ and $x\in \mathbb R^n$. Furthermore, we recall that  $O(n)$ acts isometrically  in $\mathbb R^n$, i.e.,  $|\tau x|=|x|$ for  every $\tau\in O(n)$ and $x\in \mathbb R^n$, and also that $O(n)$ acts transitively on the unit sphere $\mathbb S^{n-1}$ of $\mathbb R^n$. These properties imply that for every $\tau\in O(n)$ and $x\in \mathbb R^n$, one has
		\begin{eqnarray*}
			\mathbf{Q}_{t} g (\tau x) &=&	\inf _{y \in \R^n}\left\{g(y)+\frac{|\tau x-y|^{p^{\prime}}}{p^{\prime} t^{p^{\prime}-1}}\right\}=\inf _{y \in \R^n}\left\{g(y)+\frac{| x-\tau^{-1}y|^{p^{\prime}}}{p^{\prime} t^{p^{\prime}-1}}\right\}\\&=&\inf _{\tilde y \in \R^n}\left\{g(\tau \tilde y)+\frac{| x-\tilde y|^{p^{\prime}}}{p^{\prime} t^{p^{\prime}-1}}\right\}=\inf _{\tilde y \in \R^n}\left\{g( \tilde y)+\frac{| x-\tilde y|^{p^{\prime}}}{p^{\prime} t^{p^{\prime}-1}}\right\}\\&=&\mathbf{Q}_{t} g (x),
		\end{eqnarray*} 
		which shows that $\mathbf{Q}_{t} g $ is  radially symmetric. 
		
		The latter property implies that the functions $u,v,w: \R^n \to \R_+$ introduced in \eqref{definition-u-v-w} are all  radially symmetric. Therefore, we may apply Theorem \ref{Theorem_Figalli_Ramos},  obtaining that  there exist a dimensional constant $ C(n) >0$ and a radially symmetric log-concave function $h: \R^n \to \R_+$  such that
		\begin{align*}
			\int_{\R^n}|v(x)- a^{-\lambda}w\left(x\right)| \mathrm{d} x &\leq
			\int_{\R^n}|v(x)- a^{-\lambda}h\left(x\right)| \mathrm{d} x +
			a^{-\lambda}\int_{\R^n}|w(x)- h\left(x\right)| \mathrm{d} x \\
			&\leq C(n) (2+\varepsilon) \left(\frac{\varepsilon}{\tau}\right)^{1 / 2} \int_{\mathbb{R}} v(x)  \mathrm{d} x, 
		\end{align*} 
		where $a = \int_{\R^n} u / \int_{\R^n} v$ and $\tau=\min \left(\frac{\alpha}{\beta}, 1-\frac{\alpha}{\beta}\right)$. 
		Again, due to relation \eqref{eq_v}, if we choose 
		\begin{equation*}\label{C-n-p-2}
			C(n,p)= 3C(n) (p')^\frac{n}{p^\prime}\Gamma\left(\frac{n}{p^{\prime}} +1\right)  \omega_{n},
		\end{equation*}
		we conclude  the proof of \eqref{new-stability-Figalli-Ramos-HC}. The sharpness of the exponent  $\frac{1}{2}$ in \eqref{new-stability-Figalli-Ramos-HC} can be deduced similarly as in the proof of Theorem  \ref{main-theorem-HC} (see Step 2 for $x_0=0.)$ 
		\hfill $\square$

		\section{From HC to LSI: proof of Theorem \ref{Theorem-LSI-main}}\label{section-4}
		
		Before providing the proof of Theorem \ref{Theorem-LSI-main}, we establish a close connection between the hypercontractivity and $L^p$-logarithmic Sobolev deficits, defined in \eqref{HCI_deficit} and \eqref{LSI_deficit}, respectively.

		\begin{proposition} \label{prop_1}
			Let $n \geq 1$, $p>1$  and $g: \mathbb{R}^n \to \mathbb{R}$ be a  locally Lipschitz function with the property that $e^{g/p}\in W^{1,p}(\mathbb R^n)$ and 
			\begin{equation}\label{g-growth-prop}
				g(x)\geq c_1-c_2|x|^{p'},\ x\in \mathbb R^n,
			\end{equation}
			some $c_1\in \mathbb R$ and $c_2>0$.
			Let us denote by 
			$$y = \frac{1}{n}{\|e^{g}\|_1^{-1}} \int_{\R^n} e^{  g}\left|\nabla g\right|^p \mathrm{d} x .$$
			Then it follows that $0 < y < \infty$ and the following holds:
			\begin{equation} \label{limit-HC-LSI}
				\lim_{t \rightarrow 0^+} \frac{\delta_{p, t, 1, 1+yt}^{\sf HC}(g)}{t} = {y}  \delta_p^{\sf LSI}\left(e^{  g/p}\right).
			\end{equation}
			
		\end{proposition}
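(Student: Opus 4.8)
\emph{Plan.} The idea is to pass to the function $f:=e^{g/p}$, to rewrite $1+\delta^{\sf HC}_{p,t,1,1+yt}(g)$ as an explicit product, and then to Taylor-expand its logarithm to first order in $t$ as $t\to0^+$.

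\textbf{Reduction.} Since $|f|^p=e^{g}$ and $\nabla f=\tfrac1p e^{g/p}\nabla g$, one has $e^{g}|\nabla g|^p=p^p|\nabla f|^p$, so that
\[
y=\frac{p^p}{n}\cdot\frac{\|\nabla f\|_p^p}{\|f\|_p^p}.
\]
As $f\in W^{1,p}(\R^n)$ the numerator is finite, and since $f=e^{g/p}>0$ while a positive constant does not belong to $L^p(\R^n)$, we have $\|\nabla f\|_p>0$; hence $0<y<\infty$. A routine computation with \eqref{LSI_constant}, using $p-1=\tfrac{p}{p'}$ and $\tfrac{n}{p}+\tfrac{n}{p'}=n$, rewrites the $L^p$-logarithmic Sobolev deficit in the ``logarithmic'' form
\[
\delta_p^{\sf LSI}(e^{g/p})=\frac{n}{p}\log y-\frac{n}{p'}-\log K-\frac{\Ent(e^{g})}{\|e^{g}\|_1},\qquad K:=(p')^{n/p'}\,\Gamma\!\Big(\tfrac{n}{p'}+1\Big)\,\omega_n,
\]
which is the quantity we must reproduce. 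Writing $\beta(t):=1+yt$ and using \eqref{HCI_deficit} with $\alpha=1$,
\[
1+\delta^{\sf HC}_{p,t,1,\beta(t)}(g)=\mathcal{C}_{p,t,1,\beta(t)}\cdot\frac{\|e^{g}\|_1}{\big(\int_{\R^n}e^{\beta(t)\mathbf{Q}_t g}\,\mathrm{d}x\big)^{1/\beta(t)}},
\]
so everything reduces to a first-order expansion in $t$ of the logarithm of the right-hand side, namely of $\log\mathcal{C}_{p,t,1,\beta(t)}+\log\|e^{g}\|_1-\tfrac1{\beta(t)}\log\int_{\R^n}e^{\beta(t)\mathbf{Q}_t g}\,\mathrm{d}x$.

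\textbf{The explicit constant.} Inserting $\alpha=1$, $\beta=1+yt$ into \eqref{HCI_constant} and expanding (a purely algebraic step, again using $\tfrac{n}{p}+\tfrac{n}{p'}=n$; the factor $\big(\tfrac{\beta-\alpha}{t}\big)=y$ produces the $\log y$-term) gives
\[
\log\mathcal{C}_{p,t,1,1+yt}=yt\Big(\frac{n}{p}\log y-n-\log K\Big)+O(t^2)\qquad(t\to0^+).
\]

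\textbf{The Hopf--Lax asymptotics (analytic core).} The Hamilton--Jacobi equation at the origin (established in the Appendix) gives, for a.e.\ $x\in\R^n$, that $\tfrac{g(x)-\mathbf{Q}_t g(x)}{t}\to\tfrac{|\nabla g(x)|^p}{p}$; combined with $\beta(t)=1+yt$ this yields the pointwise limit
\[
\frac{e^{\beta(t)\mathbf{Q}_t g(x)}-e^{g(x)}}{t}\ \xrightarrow{\,t\to0^+\,}\ e^{g(x)}\Big(y\,g(x)-\frac{|\nabla g(x)|^p}{p}\Big).
\]
The crucial point is to integrate this, i.e.\ to prove, by dominated convergence,
\[
\frac{1}{t}\Big(\int_{\R^n}e^{\beta(t)\mathbf{Q}_t g}\,\mathrm{d}x-\|e^{g}\|_1\Big)\ \xrightarrow{\,t\to0^+\,}\ y\!\int_{\R^n}e^{g}g\,\mathrm{d}x-\frac{ny}{p}\,\|e^{g}\|_1,
\]
where we used $\int_{\R^n}e^{g}|\nabla g|^p=ny\|e^{g}\|_1$. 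This is exactly where the growth hypothesis \eqref{g-growth-prop} is decisive: since $t\mapsto\mathbf{Q}_t g(x)$ is non-increasing and $g(\cdot)\geq c_1-c_2|\cdot|^{p'}$, for all small enough $t$ one has a uniform two-sided bound $-C\big(1+|x|^{p'}\big)\leq\mathbf{Q}_t g(x)\leq g(x)$, and, together with $1-e^{-s}\leq s$ and the integrability contained in the hypotheses, this produces an $e^{g}$-integrable majorant for the difference quotient. Granting this, and using $\tfrac1{\beta(t)}=1-yt+o(t)$, we obtain
\[
\frac{1}{\beta(t)}\log\int_{\R^n}e^{\beta(t)\mathbf{Q}_t g}\,\mathrm{d}x=\log\|e^{g}\|_1+t\Big(\frac{y\int_{\R^n}e^{g}g\,\mathrm{d}x}{\|e^{g}\|_1}-\frac{ny}{p}-y\log\|e^{g}\|_1\Big)+o(t).
\]
I expect the construction of this $t$-uniform majorant to be the main obstacle; it is also what forces the specific choice $\beta(t)=1+yt$ (a general $\beta(t)\to1$ would not close the estimate), which is why the statement is phrased with $\beta=1+yt$.

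\textbf{Conclusion.} Subtracting the last display from $\log\mathcal{C}_{p,t,1,1+yt}+\log\|e^{g}\|_1$, the terms $\log\|e^{g}\|_1$ cancel, and, using $\tfrac{n}{p}-n=-\tfrac{n}{p'}$ together with $\tfrac{\int e^{g}g}{\|e^{g}\|_1}-\log\|e^{g}\|_1=\tfrac{\Ent(e^{g})}{\|e^{g}\|_1}$, the coefficient of $t$ collapses to exactly $y\,\delta_p^{\sf LSI}(e^{g/p})$:
\[
\log\big(1+\delta^{\sf HC}_{p,t,1,1+yt}(g)\big)=t\,y\,\delta_p^{\sf LSI}(e^{g/p})+o(t).
\]
Since the left-hand side tends to $0$, we may invert the logarithm to get $\delta^{\sf HC}_{p,t,1,1+yt}(g)=t\,y\,\delta_p^{\sf LSI}(e^{g/p})+o(t)$, which is precisely \eqref{limit-HC-LSI}.
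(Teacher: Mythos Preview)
Your proof is correct and takes essentially the same approach as the paper: both differentiate $1+\delta^{\sf HC}_{p,t,1,1+yt}(g)$ at $t=0$ by separately handling the explicit constant $\mathcal{C}_{p,t,1,\beta(t)}$ and the norm $\|e^{\mathbf{Q}_t g}\|_{\beta(t)}$ (the paper via L'H\^opital on $F(t):=\|e^{\mathbf{Q}_t g}\|_{\beta(t)}$, you via Taylor-expanding the logarithm), invoking the Hamilton--Jacobi equation at the origin (Proposition~\ref{H-J-equation}) for the latter. One small correction of motivation only: the specific value of $y$ is not what makes the dominated-convergence majorant work---that step goes through for any slope $\tilde y>0$, as the paper's general computation \eqref{choice_of_y} shows---but is rather the value that collapses the resulting limit exactly to $y\,\delta_p^{\sf LSI}(e^{g/p})$ (cf.\ the Remark following the proposition, where $y$ is identified as the minimiser of $h(y)=\frac{p^{p-1}}{y}\frac{\|\nabla f\|_p^p}{\|f\|_p^p}+\frac{n}{p}\log y$).
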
	
		\begin{proof} First of all, note that the condition $e^{  g/p}\in W^{1,p}(\mathbb R^n)$ implies that $0 < y < \infty$ by the definition of $y$. Moreover, since $g$ is locally Lipschitz and verifies \eqref{g-growth-prop}, 	by Proposition \ref{H-J-equation}  one has 
			for a.e.\ $x\in \mathbb R^n$ that 
			\begin{equation}\label{Bobkov-Ledoux-limit-0}
				\frac{d}{dt}{\bf Q}_tg(x)\Big|_{t=0}= -\frac{1}{p}|\nabla g(x)|^p.
			\end{equation}
			
In order to  prove relation \eqref{limit-HC-LSI},  in the hypercontractivity deficit $\delta_{p, t, \alpha, \beta}^{\sf HC}(g)$ we choose $\alpha:=1$ and $\beta:=\beta(t) = 1+yt$,	$t\geq 0$, for some $y>0$, that is going to be chosen later.
	 Then, we have that $1 <  \beta(t)$ for all $t>0$; in addition, let us consider the function $$F(t) = \left\|e^{\mathbf{Q}_{t} g}\right\|_{\beta(t)}, \ t\geq 0.$$ 
			In particular, $F(0)=\lim_{t\to 0^+}F(t)= \left\|e^g\right\|_1$. 
			Then, by \eqref{HCI_deficit}, for any $t>0$, we have that
			\begin{equation}\label{deficit-hyp-lsi-1}
				\delta^{\sf HC}_{p ,t,1,\beta(t)}(g) = \mathcal{C}_{p, t, 1, \beta(t)} \frac{
					F(0)}{F(t)} -1 .
			\end{equation}
			Using the explicit form of the $\mathcal{C}_{p, t, 1, \beta(t)}$ by its definition \eqref{HCI_constant}, by a direct computation we obtain
			\begin{equation}\label{limit-11}
				\lim_{t\to 0^+}\mathcal{C}_{p, t, 1, \beta(t)}=1,
			\end{equation} 
			which implies that
			\begin{equation}\label{def-to=0}
				\lim_{t\to 0^+}\delta^{\sf HC}_{p ,t,1,\beta(t)}(g)=0.
			\end{equation}
			In addition, both functions $t\mapsto \mathcal{C}_{p, t, 1, \beta(t)}$ and $t \to F(t)$ are differentiable at $0$. By using \eqref{Bobkov-Ledoux-limit-0} and the chain rule, one has that  
			$$
			\frac{F^{\prime}(0)}{F(0)} = \frac{y}{  \left\|e^{g}\right\|_{1}}\left(\Ent\left(\mathrm{e}^{   g}\right) -\frac{1}{py} \int_{\R^n} \left|\nabla g\right|^p \mathrm{e}^{ g} \mathrm{~d} x\right) .
			$$ 
			Therefore, by the L'H\^ospital rule and relations  \eqref{deficit-hyp-lsi-1} and \eqref{limit-11}, we have that
			for any $y>0$, 
			\begin{eqnarray} \label{choice_of_y}
				\lim_{t \rightarrow 0^+} \frac{\delta_{p, t, 1, 1+yt}^{\sf HC}(g)}{t}& =&\frac{d}{dt}\left(  \mathcal{C}_{p, t, 1, \beta(t)}  \frac{
					F(0) }{F(t) }\right)\Big|_{t=0}= \frac{d}{dt}\mathcal{C}_{p, t, 1, \beta(t)}\Big|_{t=0}- \frac{F^{\prime}(0)}{F(0)} \nonumber \\
				&=&
				{y}\left(%\frac{n}{p}(p-2)\log \alpha +
				\frac{n}{p}\log y-\log \left(\left(p^{\prime}\right)^{\frac{n}{p^{\prime}}} \Gamma\left(\frac{n}{p^{\prime}}+1\right) \omega_n\right) -n\right)\nonumber\\
				&&-\frac{y}{  \left\|e^{g}\right\|_{1}}\left(\Ent\left(\mathrm{e}^{ g}\right) -\frac{1}{py} \int_{\R^n} \left|\nabla g\right|^p \mathrm{e}^{ g} \mathrm{~d} x\right).
			\end{eqnarray}
			Choosing  
			$$y := \frac{1}{n}{\|e^{g}\|_1^{-1}} \int_{\R^n} e^{  g}\left|\nabla g\right|^p \mathrm{d} x $$
			in the latter relation, and using the definition of the $L^p$-logarithmic Sobolev deficit \eqref{LSI_deficit} for the function $f:=e^{  g/p}$,  we obtain exactly  \eqref{limit-HC-LSI}. 
		\end{proof}

		\begin{remark}\rm
			The choice of the value for $y>0$ is motivated by the following consideration. Notice that \eqref{choice_of_y} is equivalent to
			\begin{align*} %\label{eq_entropy}
				\frac{\Ent\left(|f|^p\right)}{\|f\|_p^p} 
				&= \frac{p^{p-1}}{y}  \frac{1}{\|f\|_p^p}  \int_{\R^n} \left| \nabla f \right|^p \, \mathrm{d}x 
				+ \frac{n}{p} \log y 
				 \nonumber \\
				&\quad - \log \left( \left( p' \right)^{\frac{n}{p'}} \Gamma\left( \frac{n}{p'} + 1 \right) \omega_n \right) - n
				- \frac{1}{y} \lim_{t \rightarrow 0^+} \frac{\delta_{p, t, 1, 1+yt}^{\sf HC}(g)}{t}
			\end{align*}
			for the function $f:=e^{  g/p}$.
			Analyzing the function 
			$$
			h(y) =  \frac{p^{p-1}}{y} \frac{1}{\|f\|_p^p} \int_{\R^n}\left|\nabla f\right|^p \mathrm{d} x  +\frac{n}{p} \log y,
			$$
			we observe that $h$ has a global minimum at
			\begin{equation} \label{y_min}
				y= \frac{p^p}{n}  \frac{1}{\|f\|_p^p} \int_{\R^n}\left|\nabla f\right|^p \mathrm{d} x = \frac{1}{n\|e^{g}\|_1} \int_{\R^n} e^{  g}\left|\nabla g\right|^p \mathrm{d} x . 
			\end{equation}
		\end{remark}

		\vspace{0.5cm}
		\noindent \textit{Proof of Theorem \ref{Theorem-LSI-main}.} We divide the proof into three steps. 
		
		\textbf{Step 1:} \textit{proof of} \eqref{lsi-estimate}.  Let $p>1$,  and  $f \in W^{1,p}(\mathbb R^n)$  be a log-concave function verifying \eqref{one-sided}; accordingly, let  $g: \mathbb{R}^n \to \mathbb{R}$ be a  concave function such that $f \coloneqq e^{ g/p}$, i.e., $g = p \log f$. Note that the function $g$ is also locally Lipschitz. 
%			\textcolor{blue}{It seems that the value of $\alpha$ does not really play a role in this proof. Why don't we just take $\alpha =1$ here and also in Proposition \ref{prop_1}?} \textcolor{brown}{MODIFIED!}
		
		Let us consider the function $\beta: (0, \infty) \to \R$ given by $\beta(t) = yt+1$, where  $y>0$ is defined as in the statement of Proposition \ref{prop_1}. According to \eqref{def-to=0}, we have that 	$
		\delta^{\sf HC}_{p ,t,1,\beta(t)}(g)  \ll 1
		$ for $t>0$ sufficiently small. Thus, we may apply the first part of Theorem \ref{Theorem2}, see \eqref{new-stability-Boroczky-De},  obtaining that there exist
		$x_0 \in \mathbb{R}^n$ and the constant $C(n,p)>0$ from \eqref{C-n-p} such that    
		\begin{equation}\label{stability_from_Boroczky}
			\int_{\R^n}\left|e^{-\theta \cdot \frac{|x-x_0|^{p^\prime}}{p^\prime}}-a^{-\frac{1}{\beta(t)}} e^{  g\left( x\right)}\right| \mathrm{d} x  \leq C(n,p) \theta^{-\frac{n}{p'}} \left({\frac{\delta^{\sf HC}_{p ,t,1,\beta(t)}(g)}{\tau}}\right)^\frac{1}{19} ,
		\end{equation}
		where $\theta=\left(\frac{\beta(t)-1}{t\beta(t) }\right)^{p^{\prime}-1}$,  $a =  \int_{\R^n} e^{\beta \mathbf{Q}_{t} g(x)} \mathrm{d} x / \int_{\R^n} e^{-\theta \beta^{p'}(t) \cdot \frac{|x|^{p^\prime}}{p^\prime}} \mathrm{d} x $, and $\tau=\min \left(\frac{1}{\beta(t)}, 1-\frac{1}{\beta(t)}\right)$.
		
		We aim to study the quantities appearing in relation \eqref{stability_from_Boroczky} in the limit as $t \to 0$. First, we observe that if $t\to 0$, then 
		$$\beta(t)\to 1,\ \ \theta\to y^{p'-1} 
		%\left(\frac{y}{\alpha }\right)^{p^{\prime}-1}
		,\ \ \tau = \min \left(\frac{1}{\beta(t)}, 1-\frac{1}{\beta(t)}\right) = 
		1-\frac{1}{yt+1} = \frac{y t}{yt+1}\to 0,$$
		and $$a\to \frac{\int_{\R^n} e^{   g} \mathrm{d} x }{ \int_{\R^n} e^{-  y^{p^{\prime}-1} \cdot \frac{|x|^{p^\prime}}{p^\prime}} \mathrm{d} x}.$$
		In addition, due to Proposition \ref{prop_1}, we have that 
		$$\lim_{t\to 0^+}\frac{\delta^{\sf HC}_{p ,t,1,\beta(t)}(g)}{\tau}= \lim_{t\to 0^+}\frac{\delta^{\sf HC}_{p ,t,1,\beta(t)}(g)}{t} \cdot\frac{yt+1}{y}= \delta_p^{\sf LSI}\left(e^{  g/p}\right).$$
		Therefore, by the above limits,  \eqref{integral-formula} and \eqref{C-n-p}, by letting $t \to 0$ in \eqref{stability_from_Boroczky}, it follows that
		\begin{equation*}
			\int_{\R^n}\left| \frac{y^{\frac{n}{p}} e^{-\frac{y^{p'-1}}{p^\prime}   |x-x_0|^{p^\prime}}}{\Gamma\left(\frac{n}{p^{\prime}}+1\right)\omega_n \left(p^{\prime}\right)^{\frac{n}{p^{\prime}}} %\alpha^{n(\frac{1}{p}-\frac{1}{p^{\prime}})}
			} - \frac{e^{   g\left( x\right)}}{\int_{\R^n} e^{  g}} \right| \mathrm{d} x  \leq c^n n^n \cdot \delta_p^{\sf LSI}(e^{  g/p})^\frac{1}{19}.
		\end{equation*}
		where
		$c>0$ is the absolute constant from Theorem \ref{Theorem_Boroczky-De} (see also Remark \ref{remark-constant}). 
		Substituting the value of $y$ from \eqref{y_min} and using $f^p = e^{  g}$, a straightforward  calculation yields the desired inequality \eqref{lsi-estimate}, i.e., 
		$$\int_{\R^n}\left| {\sf C}_2  e^{-\frac{p}{{\sf C}_1} |x-x_0|^{p^\prime}} - \frac{f^p(x)}{\|f\|_p^p} \right| \mathrm{d} x  \leq c^n n^n\cdot 
		%c^n n^n 
		\delta_p^{\sf LSI}(f)^\frac{1}{19},$$
		where
		$${\sf C}_1 =\frac{pp'}{y^{p'-1}} = p' \left( \frac{n}{p} \right)^{p^\prime-1} {\|f\|_p^{p^\prime}} \left( \int_{\mathbb{R}^n} |\nabla f|^p \,        \mathrm{d}x \right)^{1-p^\prime}$$
		and 
		$${\sf C}_2= \frac{y^{\frac{n}{p}} }{\Gamma\left(\frac{n}{p^{\prime}}+1\right)\omega_n \left(p^{\prime}\right)^{\frac{n}{p^{\prime}}} %\alpha^{n(\frac{1}{p}-\frac{1}{p^{\prime}})}
		}=\left(\left(\frac{{\sf C}_1}{p}\right)^{\frac{n}{p^\prime}} \Gamma\left(\frac{n}{p^\prime}+1\right) \omega_n\right)^{-1}.$$
		
		\textbf{Step 2:} \textit{proof of} \eqref{lsi-estimate-Schwarz}. We first state a result which will be useful later. Namely, let us assume  that  $\tilde f\in W^{1,p}(\mathbb R^n)$  is any radially symmetric,  log-concave function verifying \eqref{one-sided}.
		%	 let $\tilde g: \mathbb{R}^n \to \mathbb{R}$ be defined as $\tilde f \coloneqq e^{\alpha \tilde g/p}$.  
		A similar limiting argument as in Step 1 -- by applying the  second part of Theorem \ref{Theorem2}, see \eqref{new-stability-Figalli-Ramos-HC} --  immediately implies that 
		\begin{equation}\label{lsi-estimate-radialisra}
			\int_{\R^n}\left| \tilde {\sf C}_2  e^{-\frac{p}{\tilde {\sf C}_1} |x|^{p^\prime}} - \frac{\tilde f^p(x)}{\|\tilde f\|_p^p} \right| \mathrm{d} x  \leq 3C(n)
			%c^n n^n 
			\delta_p^{\sf LSI}(\tilde f)^\frac{1}{2}, 
		\end{equation}
		where $\tilde {\sf C}_1>0$ and  $\tilde {\sf C}_2>0$ are the corresponding values from \eqref{C-1-C-2}, computed for $\tilde f$ instead of $f$, and $C(n)>0$  is the constant from Theorem  \ref{Theorem_Figalli_Ramos}. 
		
		Now, we focus on the proof of \eqref{lsi-estimate-Schwarz}. Let us fix a log-concave function $f: \mathbb{R}^n \to \mathbb{R}_+$  with $f \in W^{1,p}(\mathbb R^n)\setminus \{0\}$, and consider its  Schwarz-rearrangement  $f^\star \in W^{1,p}(\mathbb R^n)$, see e.g.\ Lieb and Loss \cite{Lieb-Loss}. By the P\'olya--Szeg\H o inequality and the layer cake representation, one has that 
		$$\int_{\mathbb R^n}|\nabla f|^p \geq \int_{\mathbb R^n}|\nabla f^\star|^p,\ \ \|f\|_p=\|f^\star\|_p\ \ {\rm and}\ \ \Ent(f^p)=\Ent((f^\star)^p).$$
		In particular, these relations imply that
		\begin{equation}\label{deficit-star}
			\delta_p^{\sf LSI}(f)\geq \delta_p^{\sf LSI}(f^\star). 
		\end{equation}
		By definition, $f^\star$ is radially symmetric and according to Proposition 24 in Milman and Rotem \cite{Milman-Rotem}, since $f$ is log-concave, $f^\star$ is also log-concave. In particular, applying inequality \eqref{lsi-estimate-radialisra} for $\tilde f:=f^\star$, combined with \eqref{deficit-star}, we obtain the desired inequality  \eqref{lsi-estimate-Schwarz}. 
		
		\textbf{Step 3:} \textit{sharpness of the exponent $\frac{1}{2}$ in \eqref{lsi-estimate-Schwarz}.}
		%	\noindent \textit{Proof of Theorem \ref{Theorem-LSI-radial}.} 
		%	
		%	{\bf Step 1: } The proof of \eqref{lsi-estimate-radial} is analogous to that of \eqref{lsi-estimate}, with Theorem \ref{Theorem_FR} used in place of Theorem \ref{Theorem2}.
		%	
		%	{\bf Step 2: } 
		Let $n,p>1$. For convenience, we recall    
		\eqref{lsi-estimate-Schwarz}, namely, 
		\begin{equation}\label{1.12}
			\int_{\R^n}\left| \left(\left(\frac{C(n,p,f^\star)}{p}\right)^{\frac{n}{p^\prime}} \Gamma\left(\frac{n}{p^\prime}+1\right) \omega_n\right)^{-1}  e^{-\frac{p}{C(n,p,f^\star)} |x|^{p^\prime}} - \frac{(f^\star)^p(x)}{\|f\|_p^p} \right| \mathrm{d} x  \leq C(n) \delta_p^{\sf LSI}(f)^\frac{1}{2} ,
		\end{equation}
		where
		$$ C(n,p, f^\star) = p' \left( \frac{n}{p} \right)^{p^\prime-1} \|f\|_p^{p^\prime}\left( \int_{\mathbb{R}^n} |\nabla f^\star|^p \,        \mathrm{d}x \right)^{1-p^\prime}. $$
		
		At a first glance, as in the proof of the sharpness in Theorem \ref{main-theorem-HC},  we could try to use the family of functions $f_\epsilon=e^{g_\epsilon/p}$, $\epsilon>0$, where $g_\epsilon$ is from \eqref{g-epsilon}. However, it turns out that this choice is not appropriate, since $ \delta_p^{\sf LSI}(g_\epsilon)= \delta_p^{\sf LSI}(g_0)=0$ for all $\epsilon>0$. 
		Therefore, some other perturbation of the extremal function is needed. 
		
		For  $0\leq \varepsilon<p'-1$, let us consider the family of radially symmetric and log-concave Gaussian-type functions 
		$$f_\varepsilon(x)=e^{-\frac{1}{p}|x|^{p'-\varepsilon}},\ \ x\in \mathbb R^n.$$ Moreover, we also observe that  $f_\varepsilon \in W^{1,p}(\mathbb R^n)$ verifying the growth \eqref{one-sided}, 	
		and $f_\epsilon=f_\epsilon^\star$.
		
		First, we study the asymptotic behavior of the $L^p$-logarithmic Sobolev deficit 
		$\delta_p^{\sf LSI}(f_\varepsilon)$ as $\epsilon \to 0$. 
		A computation based on the integral formula \eqref{integral-formula} implies that  $$\|f_\varepsilon\|_p^p=\int_{\mathbb R}e^{-{|x|^{p'-\varepsilon}}}\mathrm{d} x =\frac{n\omega_n}{p'-\varepsilon}\Gamma\left(\frac{n}{p'-\varepsilon}\right),$$
		$$\int_{\mathbb R^n}f_\varepsilon^p\log f_\varepsilon^p \mathrm{d} x=-\int_{\mathbb R^n}|x|^{p'-\varepsilon}e^{-|x|^{p'-\varepsilon}}\mathrm{d} x=-\frac{n\omega_n}{p'-\varepsilon}\Gamma\left(1+\frac{n}{p'-\varepsilon}\right).$$
		Therefore, $$\Ent\left(f_\varepsilon^p \right)=-\frac{n\omega_n}{p'-\varepsilon}\Gamma\left(1+\frac{n}{p'-\varepsilon}\right)-\frac{n\omega_n}{p'-\varepsilon}\Gamma\left(\frac{n}{p'-\varepsilon}\right)\log\left(\frac{n\omega_n}{p'-\varepsilon}\Gamma\left(\frac{n}{p'-\varepsilon}\right)\right).$$
		In addition, since $$\nabla f_\varepsilon(x)=-\frac{p'-\varepsilon}{p}f_\varepsilon(x)|x|^{p'-\varepsilon-2} x,\ \  x\in \mathbb R^n \setminus \{0\},$$  it turns out that
		\begin{eqnarray*}
			\int_{\mathbb R^n}|\nabla f_\varepsilon|^p \mathrm{d} x &=&\left(\frac{p'-\varepsilon}{p}\right)^p\int_{\mathbb R^n} |x|^{p(p'-\varepsilon-1)}e^{-|x|^{p'-\varepsilon}} \mathrm{d} x\\&=&{n\omega_n}\frac{(p'-\varepsilon)^{p-1}}{p^p}\Gamma\left(\frac{p(p'-\varepsilon-1)+n}{p'-\varepsilon}\right).
		\end{eqnarray*}	
		Summing up the above expressions,  we obtain that 
		\begin{eqnarray*}
			\delta_p^{\sf LSI}\left(f_\varepsilon\right)&=&\frac{n}{p}\log\left(\mathcal{L}_{n,p}\left(\frac{p'-\varepsilon}{p}\right)^p\frac{\Gamma\left(\frac{p(p'-\varepsilon-1)+n}{p'-\varepsilon}\right)}{\Gamma\left(\frac{n}{p'-\varepsilon}\right)}\right)+\frac{n}{p'-\varepsilon}+\log\left(\frac{n\omega_n}{p'-\varepsilon}\Gamma\left(\frac{n}{p'-\varepsilon}\right)\right).
		\end{eqnarray*}
		Clearly we have that $$\delta_p^{\sf LSI}\left(f_0\right)=0,$$ which is expected due to the fact that $f_0=e^{-\frac{|\cdot|^{p'}}{p}}$ is an extremizer   in the $L^p$-Euclidean logarithmic Sobolev inequality
		\eqref{LSI_ineq}. To continue the proof we shall recall the definitions of the Digamma and Trigamma functions related to the Gamma function 
		$\Gamma:(0,\infty)\to \mathbb R$ given by
		$$\Gamma(x)=\int_0^\infty t^{x-1}e^{-t}{\rm d}t,\ \ x>0.$$
		The Digamma and Trigamma functions are defined as 
		$$\Psi(x)=\frac{d}{dx}\log \Gamma(x)=\frac{\Gamma'(x)}{\Gamma(x)}\ \ {\rm and}\ \ \Psi'(x),\ \ x>0,$$
		respectively. 
		
		A direct calculation  shows that 
		$$\frac{d}{d\varepsilon}\delta_p^{\sf LSI}\left(f_{ \varepsilon}\right)\Big|_{\varepsilon=0}=-\frac{n}{p'p}\left(\Psi\left(\frac{n}{p'}\right)\frac{n}{p'}-\Psi\left(\frac{n}{p'}+1\right)\frac{n}{p'}+1\right)=0,$$
		where we used the recurrence relation for the Digamma function  $\Psi(x+1)=\Psi(x)+\frac{1}{x},$ $x>0.$
		In fact, it follows that $\varepsilon\mapsto \delta_p^{\sf LSI}\left(f_\varepsilon\right)$ has the following second order behavior:
		\begin{equation}\label{LSI-eps-1}
			\delta_p^{\sf LSI}\left(f_\varepsilon\right)= K(n,p)\varepsilon^2+o(\varepsilon^2), \ \ 0<\varepsilon\ll 1,
			%\frac{\pi^2-4}{16}\varepsilon^2+o(\varepsilon^2). 
		\end{equation}
		where 
		$$
		K(n,p)=\frac{2}{np'}\left(\frac{n^2}{(p')^2}+\frac{n}{p'}-p+1-\frac{n^2}{(p')^2}\left(\frac{n}{p'}-p+1\right)\Psi'(\frac{n}{p'})\right).
		$$
		Note that for every $p,n>1$; Proposition \ref{trigamma} below   with the choices  $x\coloneqq \frac{n}{p'}>0$ and $q\coloneqq p-1>0$ implies that  $$K(n,p)>0.$$

		%	An asymptotic expansion of the latter expression immediately implies \eqref{LSI-eps-1}. 
		
		Now, we are going to study the behavior of the left-hand side of \eqref{1.12}. First,  we have that 
		% \begin{equation}\label{y-epsilon}
			% 		y_\varepsilon  :=y= \frac{1}{n}{\|e^{g_\varepsilon}\|_1^{-1}} \int_{\R^n} e^{g_\varepsilon}\left|\nabla g_\varepsilon\right|^p \mathrm{d} x =\frac{(p'-\varepsilon)^p}{n}\frac{\Gamma\left(\frac{p(p'-\varepsilon-1)+n}{p'-\varepsilon}\right)}{\Gamma\left(\frac{n}{p'-\varepsilon}\right)}.
			% \end{equation}
		%	
		%	
		%	
		%	
		%	$$	y :=y_\varepsilon= 4 \|e^{g_\varepsilon}\|_2^{-2} \int_{\R } e^{2 g_\varepsilon}\left|\nabla g_\varepsilon\right|^2 \mathrm{d}x=(2-\varepsilon)^2\frac{\Gamma\left(\frac{3+2\varepsilon}{2-\varepsilon}\right)}{\Gamma\left(\frac{1}{2-\varepsilon}\right)}.$$
		\begin{align*}
			C(n, p, f_\varepsilon) 
			&= p' \left( \frac{n}{p} \right)^{p'-1} \|f_\varepsilon\|_p^{p'} \left( \int_{\mathbb{R}^n} |\nabla f_\varepsilon|^p \, \mathrm{d}x \right)^{1 - p'} \\
			&= p' \, n^{p'-1}  \frac{p}{\left(p' - \varepsilon\right)^{p'}} 
			\left[  \frac{\Gamma\left( \frac{n}{p' - \varepsilon} \right)}{\Gamma\left( \frac{p(p' - \varepsilon - 1) + n}{p'- \varepsilon} \right)} \right]^{p'-1}.
		\end{align*}    
		% $$C(n, p, 1)=\Gamma\left(\frac{n}{p^{\prime}}+1\right)\omega_n \left(p^{\prime}\right)^{\frac{n}{p^{\prime}}}.$$	
		According to the second order expansion of the deficit $\delta_p^{\sf LSI}\left(f_\varepsilon\right)$, see  \eqref{LSI-eps-1},	 the sharpness of the exponent $1/2$ in \eqref{lsi-estimate-Schwarz} follows once we prove that there exists $C=C(n,p)>0$ such that for every $\varepsilon>0$ small enough and every $x_0\in \mathbb R^n$, one has
		\begin{equation}\label{LSI-reduced}
			\int_{\R^n}\left| \left(\left(\frac{C(n,p,f_\varepsilon)}{p}\right)^{\frac{n}{p^\prime}} \Gamma\left(\frac{n}{p^\prime}+1\right) \omega_n\right)^{-1}  e^{-\frac{p}{C(n,p,f_\varepsilon)} |x|^{p^\prime}} - \frac{e^{-|x|^{p'-\varepsilon}}}{\|f_\varepsilon\|_p^p} \right| \mathrm{d} x  \geq C\varepsilon.
		\end{equation}    
		% \begin{equation}\label{LSI-reduced}
			% 		\int_{\R^n}\left| y_\varepsilon^{\frac{n}{p}} e^{-\frac{y_\varepsilon^{p'-1}}{p^\prime}   |x-x_0|^{p^\prime}} - \frac{C(n,p,1)}{\|f_\varepsilon\|_p^p} e^{  -{|x|^{p'-\varepsilon}}}\right| \mathrm{d} x 	\geq C\varepsilon.
			%			\ \ \ 
			%		\int_{\mathbb R}\left|\sqrt{{\Gamma\left(\frac{3+2\varepsilon}{2-\varepsilon}\right)}{\Gamma\left(\frac{1}{2-\varepsilon}\right)}}e^{-\frac{(2-\varepsilon)^2}{2}\frac{\Gamma\left(\frac{3+2\varepsilon}{2-\varepsilon}\right)}{\Gamma\left(\frac{1}{2-\varepsilon}\right)}|x-x_0|^2}-\sqrt{\frac{\pi}{2}}e^{-|x|^{2-\varepsilon}}\right|	\mathrm{d} x	
			%		\geq C\varepsilon.
			% \end{equation} 
		% where $y_\varepsilon$ is from \eqref{y-epsilon}. 
		Since $$\|f_\varepsilon\|_p^p \to\omega_n \Gamma\left(\frac{n}{p'}+1\right)\ \ {\rm as}\ \ \varepsilon\to 0,$$
		relation \eqref{LSI-reduced} can be rewritten into the equivalent form (with eventually another constant $C>0$) 
		\begin{equation}\label{LSI-reduced-modified}
			\int_{\R^n}\left| \left(\frac{C(n,p,f_\varepsilon)}{p}\right)^{-\frac{n}{p^\prime}}  e^{-\frac{p}{C(n,p,f_\varepsilon)} |x|^{p^\prime}} -  e^{  -{|x|^{p'-\varepsilon}}}\right| \mathrm{d} x 	\geq C\varepsilon.
			%			\ \ \ 
			%		\int_{\mathbb R}\left|\sqrt{{\Gamma\left(\frac{3+2\varepsilon}{2-\varepsilon}\right)}{\Gamma\left(\frac{1}{2-\varepsilon}\right)}}e^{-\frac{(2-\varepsilon)^2}{2}\frac{\Gamma\left(\frac{3+2\varepsilon}{2-\varepsilon}\right)}{\Gamma\left(\frac{1}{2-\varepsilon}\right)}|x-x_0|^2}-\sqrt{\frac{\pi}{2}}e^{-|x|^{2-\varepsilon}}\right|	\mathrm{d} x	
			%		\geq C\varepsilon.
		\end{equation} 
		For simplicity, for every $\varepsilon\geq 0$ and $x\in \mathbb R^n$, let 
		$$g(\varepsilon,x)=\left(\frac{C(n,p,f_\varepsilon)}{p}\right)^{-\frac{n}{p^\prime}}  e^{-\frac{p}{C(n,p,f_\varepsilon)} |x|^{p^\prime}} \quad {\rm and} \quad h(\varepsilon,x)=e^{  -{|x|^{p'-\varepsilon}}}.$$ 
		% $$g(\varepsilon,x)=y_\varepsilon^{\frac{n}{p}}\frac{\Gamma\left(\frac{n}{p'-\varepsilon}+1\right)}{\Gamma\left(\frac{n}{p^{\prime}}+1\right) \left(p^{\prime}\right)^{\frac{n}{p^{\prime}}}} e^{-\frac{y_\varepsilon^{p'-1}}{p^\prime}   |x-x_0|^{p^\prime}}\ \ {\rm and}\ \ h(\varepsilon,x)=e^{  -{|x|^{p'-\varepsilon}}}.$$ 
		Since
		$C(n, p, f_0) = p$,
		it follows that $g(0,x) - h(0,x) = 0$ for every $x\in \mathbb R^n$, while
		$$\frac{d}{d\varepsilon}g(\varepsilon,x)\Big|_{\varepsilon=0} =- 
		\frac{1}{n}\left(\frac{n^2}{(p')^2}-\left(\frac{n}{p'}\Psi(\frac{n}{p'})+\frac{n}{p'}+1\right)|x|^{p'}\right)e^{-|x|^{p'}}
		%\sqrt{\frac{\pi}{2}} \left[\frac{1}{4} -\left(\frac{3-\gamma}{2} - \log 2\right) x^2 \right]e^{-x^{2}},
		$$ and $$
		\frac{d}{d\varepsilon}h(\varepsilon,x)\Big|_{\varepsilon=0} =
		|x|^{p'}e^{-|x|^{p'}}\log |x|  .
		%\sqrt{\frac{\pi}{2}}  x^2 \log x e^{-x^{2}},
		$$
		%where $\gamma\approx 0.57721$ is the Euler constant and we used the basic properties of the digamma function $\Psi(\frac{1}{2})=\frac{\Gamma'\left(\frac{1}{2}\right)}{\Gamma\left(\frac{1}{2}\right)}=-\gamma-2\log 2$ and   $\Psi(\frac{3}{2})=\frac{\Gamma'\left(\frac{3}{2}\right)}{\Gamma\left(\frac{3}{2}\right)}=-\gamma-2\log 2 + 2.$
		% We distinguish two cases. 
		% \textit{Case 1:}  $x_0= 0$.  
		Therefore, Fatou's lemma, L'H\^ospital's rule and the above derivatives show that 
		\begin{eqnarray*}
			J&\coloneqq&\liminf_{\varepsilon\to 0}\frac{1}{\varepsilon}\int_{\mathbb R^n}\left|g(\varepsilon,x)-h(\varepsilon,x)\right|	\mathrm{d} x\\&\geq& 
			\int_{\mathbb R^n}\liminf_{\varepsilon\to 0}\frac{\left|g(\varepsilon,x)-h(\varepsilon,x)\right|}{\varepsilon}\mathrm{d} x\\&=&\frac{1}{n}\int_{\mathbb R^n}\left|\frac{n^2}{(p')^2}-\left(\frac{n}{p'}\Psi(\frac{n}{p'})+\frac{n}{p'}+1-n\log|x|\right)|x|^{p'}\right|e^{-|x|^{p'}}\mathrm{d} x.
		\end{eqnarray*}
		%	For any fixed $x > 0$, let us define the functions
		%	$$f(\varepsilon) = \sqrt{{\Gamma\left(\frac{3+2\varepsilon}{2-\varepsilon}\right)}{\Gamma\left(\frac{1}{2-\varepsilon}\right)}}e^{-\frac{(2-\varepsilon)^2}{2}\frac{\Gamma\left(\frac{3+2\varepsilon}{2-\varepsilon}\right)}{\Gamma\left(\frac{1}{2-\varepsilon}\right)}x^2} \quad \text{ and } \quad
		%	h(\varepsilon) = \sqrt{\frac{\pi}{2}}e^{-x^{2-\varepsilon}},\quad \varepsilon >0.$$	
		%	Now, by  and a direct computation we obtain that
		%	\begin{equation*}
			%		J \geq \frac{\sqrt{2\pi}}{4}\int_0^\infty\left|2x^2\gamma+4x^2\log \frac{2}{x} - 6x^2+1 \right|e^{-x^2}\mathrm{d} x.
			%	\end{equation*}
		It is clear that the latter integral is finite and strictly positive, which ends the proof of \eqref{LSI-reduced-modified}.
		\hfill $\square$
		
	\begin{remark}\rm (Limit  case in Theorem \ref{Theorem-LSI-main} when $p\searrow 1$) \label{remark-Beckner} 
			In the following we intend to take the limit $p\searrow 1$ in \eqref{LSI_ineq}. To do that, we shall assume that there exists $p_0 >1$ such that for all $1< p < p_0$ the function $f$ satisfies the conditions of Theorem \ref{Theorem-LSI-main} and thus \eqref{LSI_ineq} holds. We denote by $\mathcal{F}_{p_0}$ this class of functions. Taking the limit $p \to 1$  we obtain the $L^1$-\textit{logarithmic Sobolev inequality}:  
				\begin{equation} \label{LSI_ineq=1}
				\frac{\Ent(|f|) }{\|f\|_1} 
				\leq {n} \log \left(  \frac{1}{n\omega_n^{1/n}} \frac{\ds\int_{\R^n} |\nabla f| \, \mathrm{d} x}{\|f\|_1} \right), \ \ \forall f\in \mathcal{F}_{p_0}.
			\end{equation}
		In fact, this inequality holds in the larger space of functions with bounded variation, ${\sf BV}(\mathbb R^n)$, rather than in $\mathcal{F}_{p_0}$, see Beckner \cite{Beckner} and Ledoux \cite{Ledoux}; moreover, equality holds in \eqref{LSI_ineq=1} -- studied on ${\sf BV}(\mathbb R^n)$ -- if and only if $f=\mathbbm{1}_{B(x_0,r)}$ for some $x_0\in \mathbb R^n$ and $r>0$, where $\mathbbm{1}_S$ is the characteristic function of the set $S\subset \mathbb R^n$ and $B(x_0,r)=\{x\in \mathbb R^n:|x-x_0|<r\}.$   
			
		As we pointed out in the Introduction, it is natural to ask the stability of \eqref{LSI_ineq=1} when we take the limit $p\searrow 1$ in Theorem \ref{Theorem-LSI-main}.  For $f\in \mathcal{F}_{p_0}$ by using the notations from \eqref{C-1-C-2}, one has that
		\begin{equation}\label{limit-C1}
			\lim_{p\searrow 1}{\sf C}_1^\frac{1}{p'}=n\frac{\|f\|_1}{\ds\int_{\mathbb{R}^n} |\nabla f| \,        \mathrm{d}x}=\frac{1}{y},
		\end{equation}
		where we use  the expression $y$ from \eqref{y_min} for $p=1.$
		Therefore, by the second term of \eqref{C-1-C-2}, we obtain that 
		$$\lim_{p\searrow 1}{\sf C}_2=y^n\omega_n^{-1}.$$
		Finally, on account of  \eqref{limit-C1}, let us observe that 
		$$\lim_{p\searrow 1}\frac{|x-x_0|^{p'}}{{\sf C}_1} =\lim_{p\searrow 1}\left(\frac{|x-x_0|}{{\sf C}_1^{1/p'}}\right)^{p'}=\begin{cases}
			0 , &\text{ if }\  |x-x_0|<1/y,\\
			+\infty, &\text{ if }\ |x-x_0|>1/y,
		\end{cases}$$
		thus for a.e.\ $x\in \mathbb R^n,$
		$$\lim_{p\searrow 1}e^{-p\frac{|x-x_0|^{p'}}{{\sf C}_1}} =\begin{cases}
			1 , &\text{ if }\  |x-x_0|<1/y,\\
			0, &\text{ if }\ |x-x_0|>1/y
		\end{cases}=\mathbbm{1}_{B(x_0,y^{-1})}(x).$$
		Having the above limits, it remains to take  $p\searrow 1$ in \eqref{lsi-estimate}, in order to obtain the stability result	\begin{equation}\label{lsi-estimate-limit}
			\int_{\R^n}\left| \frac{\mathbbm{1}_{B(x_0,y^{-1})}(x)}{\omega_n y^{-n}} - \frac{f(x)}{\|f\|_1} \right| \mathrm{d} x  \leq C(n)
			%c^n n^n 
			\delta_1^{\sf LSI}(f)^\frac{1}{19}; 
		\end{equation}
		the same can be done  in \eqref{lsi-estimate-Schwarz} in the radially symmetric case by replacing $1/19$ by $1/2$.
		
		It would be natural to extend this procedure and obtain a stability result for all function in the larger class of ${\sf BV}(\mathbb R^n)$ functions. Further comments on this problem is given in Section \ref{section-6}.
		
%Theorem \ref{Theorem-LSI-main} applies under the growth condition \eqref{one-sided}. Therefore, to apply the %above limiting argument, we have to assume that there is   $p_0>1$ with the property that for every $p\in (1,p_0)$ %there exist $c_1=c_1(p)>0$ and $c_2=c_2(p)>0$ such that  
		%\begin{equation}\label{growth-f-for-many-p}
			%	f(x)\geq c_1e^{-c_2|x|^{p'}},\ x\in \mathbb R^n.
		%\end{equation}
	%Further comments on this problem is given in Section \ref{section-6}.
%	Instead of the latter growth assumption, the more natural condition on $f$ would be to ask that there exist $C>0$ and a non-zero measured set $S\subset \mathbb R^n$ such that $f(x)\geq C$ for a.e.\ $x\in S.$	However, at this moment, our limiting argument does not allow this weaker assumption to obtain \eqref{lsi-estimate-limit}.	
\end{remark}

		\section{Stability in Gaussian HC and Gaussian LSI}\label{section-5}
		
		\subsection{Stability in Gaussian hypercontractivity estimates} Let $$\mathrm{d} \mu(x)=(2\pi)^{-n/2}e^{-|x|^2/2}\mathrm{d} x$$
		be  the Gaussian measure, and denote by $\|\cdot\|_{\alpha,\mu}$ the $L^\alpha$-norm with respect to the measure $\mathrm{d} \mu.$ According to Bobkov, Gentil and Ledoux \cite[Therem 2.1]{BGL}, for every  $t>0$, $\alpha>0$ and function $g:\mathbb R^n\to \mathbb R$ with the property that
		$e^g \in L^{\alpha}(\R^n, \mathrm{d}\mu)$, one has  the \textit{Gaussian hypercontractivity estimate} 
		\begin{equation}\label{GHC}
			\|e^{{\bf Q}_t g}\|_{\alpha+t,\mu}\leq \|e^{g}\|_{\alpha,\mu}. 
		\end{equation}
%		for every  every smooth function  there
%	\textcolor{brown}{	exist $C_1\in \mathbb R$ and   $C_2 \in (0,\frac{1}{2\alpha})$  such that
%		\begin{equation}\label{g-growth}
%			g(x) \geq C_1 - C_2|x|^2 , x \in \mathbb R^n.
%		\end{equation}}
%		Note  that here  ${\bf Q}_t g$ is taken from \eqref{Q-t-definition} for $p=p'=2.$ \textcolor{red}{\textit{(We will see from the proof of Theorem \ref{theorem-G-HC-1} below  that \eqref{g-growth} guarantees the facts that both $e^{g}$ and $e^{{\bf Q}_t g}$ are integrable with respect to the norms $\|\cdot\|_{\alpha,\mu}$ and $\|\cdot\|_{\alpha+t,\mu}$, respectively.)} Cancel the paranthesis.} \textcolor{blue}{In fact I do not really see why the growth condition would be necessary for the Theorem to hold. From the proof it seems that just the condition $e^g \in L^{\alpha}(\R^n, d\mu)$ should be enough to conclude the statement. Of course this growth condition will in fact imply this weaker condition but I think that the only condition we need is this. What do you think?} \textcolor{brown}{MODIFIED!}
		Moreover, the value $y=1$ in the norm $\|\cdot\|_{\alpha+yt,\mu}$ is optimal (it cannot be replaced by a smaller
		constant), while  equality holds in \eqref{GHC} if and only if for some $x_0\in \mathbb R^n$ and $C_0\in \mathbb R$, one has that  $g(x)=\langle x,x_0\rangle +C_0$, see Balogh and Krist\'aly \cite{BK-JEMS}. 
		
		According to \eqref{GHC}, we may define the \textit{Gaussian hypercontractivity deficit} of $g$, i.e., 
		\begin{equation}  \label{G-HC-D}
			\delta^{\sf GHC}_{\alpha ,t}(g) =  \frac{
				\|e^{g}\|_{\alpha,\mu}^\alpha}{	\|e^{{\bf Q}_t g}\|^\alpha_{\alpha+t,\mu}} -1  \geq 0.
		\end{equation}
		Our first result is a counterpart of Theorem \ref{main-theorem-HC}.

		\begin{theorem}\label{theorem-G-HC-1} Let $n \geq 1$, $\alpha>0$  and   $t>0$. Then, there exists a constant $ C(n,\alpha,t) >0$ such that for any  function $g:\mathbb R^n\to \mathbb R$ with $e^g \in L^{\alpha}(\R^n, \mathrm{d}\mu)$, 
			%and $e^{{\bf Q}_tg}\in L^\beta(\mathbb R^n)$,%
			there exists $x_0\in \mathbb R^n$ such that
			\begin{equation}\label{G-stab-HC}
				\int_{\R^n}\Big|k e^{\langle x,x_0\rangle }-a^{-\frac{\alpha}{\alpha+t}} e^{\alpha g(x)} \Big| \mathrm{d}\mu (x)  \leq C(n,\alpha,t) \cdot   \delta^{\sf GHC}_{\alpha,t}(g)^\frac{1}{2} ,
			\end{equation}
			where 
			\begin{equation}\label{G-theta_and_a_definition}
				k=e^{-\frac{|x_0|^2}{2}}\ \  and\ \ a =  \int_{\R^n} e^{(\alpha+t) \mathbf{Q}_{t} g(x)} \mathrm{d} \mu(x) .
			\end{equation}
			Moreover, the exponent $\frac{1}{2}$ of $\delta^{\sf GHC}_{\alpha,t}(g)$ in \eqref{G-stab-HC} is sharp. 
		\end{theorem}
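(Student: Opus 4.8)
The plan is to follow the same two-step strategy used in the proof of Theorem \ref{main-theorem-HC}, but now transferring the Gaussian hypercontractivity estimate \eqref{GHC} into a Pr\'ekopa--Leindler configuration. First I would recall the defining inequality of the Hopf--Lax semigroup: for all $x,y\in\R^n$ and $\lambda\in(0,1)$,
\[
\mathbf{Q}_t g(x)\le g\!\left(x+\tfrac{1-\lambda}{\lambda}y\right)+\left(\tfrac{1-\lambda}{\lambda}\right)^{p'}\frac{|y|^{p'}}{p' t^{p'-1}},
\]
which, after multiplying by $\alpha$, exponentiating, and multiplying both sides by the Gaussian density, produces (with the right choice $\lambda=\tfrac{\alpha}{\alpha+t}$, at least when $\alpha\le\tfrac{\alpha+t}{2}$, the complementary case being symmetric) a pointwise inequality of the form $U(x)^\lambda V(y)^{1-\lambda}\le W(\lambda x+(1-\lambda)y)$ for suitable $U,V,W:\R^n\to\R_+$. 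Here $U$ should be $e^{(\alpha+t)\mathbf{Q}_t g}$ weighted by the Gaussian, $W$ should be $e^{\alpha g(\frac{\alpha+t}{\alpha}\,\cdot)}$ again weighted by a Gaussian, and $V$ should be the Gaussian-type kernel that closes the inequality — one has to track carefully how the quadratic exponents $-|x|^2/2$ of the measure interact with the $p'$-homogeneous term from the Hopf--Lax formula. In the Gaussian case the extremizers are $e^{\langle x,x_0\rangle+C_0}$ rather than Gaussians, so after incorporating the density the relevant "$v$" is itself Gaussian and the translated extremizer $ke^{\langle x,x_0\rangle}\,\mathrm{d}\mu$ corresponds to a shifted Gaussian density; the constant $k=e^{-|x_0|^2/2}$ is exactly the renormalization that makes $ke^{\langle x,x_0\rangle}e^{-|x|^2/2}=e^{-|x-x_0|^2/2}$.

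\medskip

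\noindent The second ingredient is to identify the Pr\'ekopa--Leindler defect. Setting $\varepsilon:=\delta^{\sf GHC}_{\alpha,t}(g)$, I would check — using the integral formula \eqref{integral-formula} together with a change of variables and the fact that equality in \eqref{GHC} reduces to $\|e^g\|_{\alpha,\mu}^\alpha=\|e^{\mathbf{Q}_tg}\|_{\alpha+t,\mu}^\alpha$ — that $\int_{\R^n}W=(1+\varepsilon)\big(\int_{\R^n}U\big)^\lambda\big(\int_{\R^n}V\big)^{1-\lambda}$, precisely the hypothesis \eqref{PL-epsilon}. Then Theorem \ref{Theorem_Figalli_etal} applies and yields, for some $x_0\in\R^n$,
\[
a\int_{\R^n}\big|a^{-\lambda}W(x)-V(x-x_0)\big|\,\mathrm{d}x\le C\sqrt{\varepsilon}\int_{\R^n}U\,\mathrm{d}x=C\sqrt{\varepsilon}\,a\int_{\R^n}V\,\mathrm{d}x,
\]
with $a=\int U/\int V$. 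Undoing the normalizations (the change of variables $x\mapsto\frac{\alpha+t}{\alpha}x$ in the $W$ term, and recognizing $V(x-x_0)\,\text{vs.}\,e^{-|x-x_0|^2/2}$ up to the Gaussian normalization) converts this into \eqref{G-stab-HC}, with $C(n,\alpha,t)$ an explicit product of $\Gamma$-factors, $\omega_n$, the $C_0(n,\lambda)$ from Theorem \ref{Theorem_Figalli_etal}, and powers of $\alpha,\alpha+t$. The value $a=\int_{\R^n}e^{(\alpha+t)\mathbf{Q}_tg}\,\mathrm{d}\mu$ in \eqref{G-theta_and_a_definition} should drop out because $\int_{\R^n}V\,\mathrm{d}x$ equals a pure dimensional constant after the Gaussian density is absorbed, so $a$ reduces to just the numerator integral over $\mathrm{d}\mu$.

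\medskip

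\noindent For the sharpness of the exponent $\tfrac12$, I would run the analogue of Step 2: fix $\alpha=1$ (or any convenient value) and test with a one-parameter family of functions that is a small perturbation of the linear extremizer, e.g.\ $g_\varepsilon(x)=\langle x,x_0\rangle+\varepsilon q(x)$ for a fixed quadratic (or otherwise suitably chosen) $q$, or equivalently a family whose Hopf--Lax evolution remains computable against the Gaussian measure. The goal is to show $\delta^{\sf GHC}_{1,t}(g_\varepsilon)=c\varepsilon^2+o(\varepsilon^2)$ with $c>0$ by an asymptotic expansion of the two Gaussian norms (again via \eqref{integral-formula} and completing the square), while simultaneously showing that for every $x_0$ the left-hand side of \eqref{G-stab-HC} is bounded below by $C\varepsilon$; the lower bound follows from a Fatou-lemma argument (differentiating the integrand in $\varepsilon$ at $\varepsilon=0$ and checking the resulting integral is strictly positive and finite) in the aligned-peak case, and from dominated convergence to a positive constant in the misaligned case $x_0\ne$ the natural center. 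The main obstacle I anticipate is the bookkeeping in the first step: unlike the Euclidean setting, the Gaussian density carries its own quadratic term $-|x|^2/2$, and after the substitution $y\leftrightarrow$ slack variable and the rescaling $x\mapsto\frac{\alpha+t}{\alpha}x$, one must verify that these quadratics combine so that $V$ is \emph{exactly} an affine-shifted Gaussian (not merely a log-concave function), which is what guarantees the extremizer on the left of \eqref{G-stab-HC} is $ke^{\langle x,x_0\rangle}$ and fixes the precise formula for $k$; getting all these constants and exponents consistent — and confirming the promised cancellation that simplifies $a$ — is the delicate part. The rest is a direct transcription of the Euclidean argument.
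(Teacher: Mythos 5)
Your high-level strategy — convert the Gaussian hypercontractivity estimate into a Pr\'ekopa--Leindler configuration, apply Theorem \ref{Theorem_Figalli_etal}, and test with a one-parameter perturbation of the extremizer for sharpness — is exactly the paper's route. However, the specific triple $(U,V,W)$ you propose imports a misstep from the Euclidean case. You suggest $W$ should be $e^{\alpha g(\frac{\alpha+t}{\alpha}\,\cdot)}$ times a Gaussian weight; but the rescaled midpoint $\frac{\alpha+t}{\alpha}(\lambda x+(1-\lambda)y)=x+\frac{t}{\alpha}y$ is no longer a convex combination, and if you then try to choose the Gaussian weight $e^{-\psi}$ in $W$ so that the Pr\'ekopa--Leindler inequality closes, you find that $\psi$ would have to carry a cross term in $|x-y|^2$ and hence cannot be a function of the single argument $\lambda x+(1-\lambda)y$. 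This is precisely the bookkeeping obstacle you anticipate, and it does not resolve by rescaling.

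The paper's choice avoids this entirely: take
\begin{equation*}
u(x)=(2\pi)^{-n/2}e^{(\alpha+t)\mathbf{Q}_tg(x)-|x|^2/2},\quad v(x)=(2\pi)^{-n/2}e^{-|x|^2/2},\quad w(x)=(2\pi)^{-n/2}e^{\alpha g(x)-|x|^2/2},
\end{equation*}
with \emph{no} rescaling in $w$, and $\lambda=\frac{\alpha}{\alpha+t}$. Then the defining inequality of $\mathbf{Q}_t$ applied directly at the convex combination gives $\mathbf{Q}_tg(x)\le g(\lambda x+(1-\lambda)y)+\frac{t}{2(\alpha+t)^2}|x-y|^2$, and the Gaussian parallelogram identity $|\lambda x+(1-\lambda)y|^2=\lambda|x|^2+(1-\lambda)|y|^2-\frac{\alpha t}{(\alpha+t)^2}|x-y|^2$ produces exactly the $|x-y|^2$ term needed to cancel; these two facts close the condition $u(x)^\lambda v(y)^{1-\lambda}\le w(\lambda x+(1-\lambda)y)$ with no further adjustments. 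Because $\int v\,\mathrm{d}x=1$, one immediately gets $a=\int u/\int v=\int e^{(\alpha+t)\mathbf{Q}_tg}\,\mathrm{d}\mu$, confirming the simplification you hoped for; and $v(x-x_0)=ke^{\langle x,x_0\rangle}(2\pi)^{-n/2}e^{-|x|^2/2}$ with $k=e^{-|x_0|^2/2}$ gives the extremizer exactly as you describe. For the sharpness step the paper takes $\alpha=t=1$ and $g_\varepsilon(x)=-\varepsilon|x|^2$ (for which $\mathbf{Q}_1g_\varepsilon$ remains a multiple of $|x|^2$), computes $\delta^{\sf GHC}_{1,1}(g_\varepsilon)=n\varepsilon^2+o(\varepsilon^2)$, and runs the Fatou/dominated-convergence argument you outline; your suggested linear-plus-quadratic perturbation is in the right spirit but the pure quadratic perturbation keeps the Hopf--Lax semigroup explicit and is the cleaner choice.
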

		
		\begin{proof} We divide the proof into two parts. 
			
			\textbf{Step 1:} \textit{proof of \eqref{G-stab-HC}}.
			We introduce  the functions  $u,v,w: \R^n \to \R_+$ given by 
			\begin{equation}\label{G-definition-u-v-w}
				u(x)=(2\pi)^{-n/2}e^{(\alpha+t) \mathbf{Q}_{t} g(x)-|x|^2/2}, \quad 
				v(x)=(2\pi)^{-n/2}e^{-|x|^2/2},  \quad
				w(x)=(2\pi)^{-n/2}e^{\alpha g\left( x\right)-|x|^2/2},
			\end{equation}
			and consider the constant $$\lambda=\frac{\alpha}{\alpha+t}\in (0,1). $$
			
			By the definition of ${\bf Q}_tg$, we have for every $x,y\in \mathbb R^n$ that
			$${\bf Q}_tg(x)\leq g(\lambda x+(1-\lambda)y)+\frac{t}{2(\alpha+t)^2}|x-y|^2.$$ This inequality and the identity $|\lambda x+(1-\lambda)y|^2=\lambda |x|^2+(1-\lambda)|y|^2-\frac{\alpha t}{(\alpha+t)^2}|x-y|^2$ imply that the functions from \eqref{G-definition-u-v-w} verify
			$$	u(x)^{\lambda}  v(y)^{1-\lambda} 
			\leq  w( \lambda x+(1-\lambda) y), $$
		and thus we can apply the Pr\'ekopa--Leindler inequality 
				\begin{equation} \label{ineq-PL} \ds\int_{\mathbb{R}^n} w\mathrm{d} x  \geq  \left(\int_{\mathbb{R}^n} u \mathrm{d} x \right)^\lambda \left(\int_{\mathbb{R}^n} v \mathrm{d} x \right)^{1-\lambda} .
				\end{equation}
%				By our assumption on the function $g$ we obtain that $\int_{\mathbb{R}^n} w\mathrm{d} x < \infty $.
%%				, which means that $e^g$ is integrable with respect to $||\cdot ||_{\alpha, \mu}$. 
%				Since clearly $0<\int_{\mathbb{R}^n}v\mathrm{d} x < \infty ,$ we conclude by \eqref{ineq-PL} that $\int_{\mathbb{R}^n} u\mathrm{d} x < \infty $, which means that $e^{\textbf{Q}_tg}$ is integrable with respect to $\|\cdot \|_{\alpha+t, \mu}$.
			
			Furthermore, if $\varepsilon=	\delta^{\sf GHC}_{\alpha ,t}(g)$, relation \eqref{G-HC-D} can be equivalently written into the form 
			$$	\ds\int_{\mathbb{R}^n} w =(1+\varepsilon) \left(\int_{\mathbb{R}^n} u \right)^\lambda \left(\int_{\mathbb{R}^n} v \right)^{1-\lambda}.$$
			
			Therefore, we may apply Theorem \ref{Theorem_Figalli_etal} to  $u,v$ and $w$ from \eqref{G-definition-u-v-w}, obtaining that there exist $x_0\in \mathbb R^n$ and  a constant $C=C(n,
			\alpha,t)>0$ such that
			\begin{align*}
				\int_{\mathbb{R}^n}|a^{-\lambda}w(x) -   v(x-x_0)| \mathrm{d} x \leq C \sqrt{\varepsilon} \int_{\mathbb{R}^n} v \mathrm{d} x,
			\end{align*} 
			where 
			$$a =  \frac{\int_{\R^n} u}{\int_{\R^n} v} =\int_{\R^n} e^{(\alpha+t) \mathbf{Q}_{t} g(x)} \mathrm{d} \mu(x).
			$$
			It is clear that the latter inequality is equivalent to \eqref{G-stab-HC}. 
			
			\textbf{Step 2:} \textit{sharpness in \eqref{G-stab-HC}}. For simplicity, we consider $\alpha=t=1$ and we use the test-function $$g_\varepsilon(x)=-\varepsilon |x|^2,\ x\in \mathbb R^n,$$
			for $\varepsilon\in [0,\frac{1}{4})$. It is clear that $e^{g_\varepsilon} \in L^1(\R^n, \mathrm{d}\mu)$ and since ${\bf Q}_1g_\varepsilon(x)=-\frac{\varepsilon}{1-2\varepsilon}|x|^2,$ $x\in \mathbb R^n,$ a direct computation shows that
			$$ \|e^{g_\varepsilon}\|_{1,\mu}=(1+2\varepsilon)^{-\frac{n}{2}}\ \ {\rm and}\ \ \|e^{{\bf Q}_1 g_\varepsilon}\|_{2,\mu}=\left(\frac{1-2\varepsilon}{1+2\varepsilon}\right)^\frac{n}{4}.$$
			Therefore, it follows that 
			$$
			\delta^{\sf GHC}_{1,1}(g_\varepsilon) =  \frac{
				\|e^{g_\varepsilon}\|_{1,\mu}}{	\|e^{{\bf Q}_1 g_\varepsilon}\|_{2,\mu}} -1=(1-4\varepsilon^2)^{-\frac{n}{4}} -1, 
			$$
			thus 
			$$
			\delta^{\sf GHC}_{1,1}(g_\varepsilon) =  n\varepsilon^2+o(\varepsilon^2), \ \ 0<\varepsilon\ll 1.
			$$
			
			Due to the latter relation, the sharpness of the exponent $1/2$ in \eqref{G-stab-HC} follows once we prove that there exists $C>0$ such that for every $x_0\in \mathbb R$ and every small $\varepsilon>0$, one has 	
			\begin{equation}\label{-sharp-G-stab-HC}
				\int_{\R^n}\Big| e^{\langle x,x_0\rangle -\frac{|x_0|^2}{2}}-\left(\frac{1+2\varepsilon}{1-2\varepsilon}\right)^\frac{n}{4} e^{ g_\varepsilon(x)} \Big| \mathrm{d}\mu (x)  \geq C   \varepsilon.
			\end{equation}
			
			\textit{Case 1}: $x_0= 0$. In this case \eqref{-sharp-G-stab-HC} reduces to 
			$$	\int_{\R^n}\Big| 1-\left(\frac{1+2\varepsilon}{1-2\varepsilon}\right)^\frac{n}{4} e^{ -\varepsilon |x|^2} \Big| \mathrm{d}\mu (x)  \geq C   \varepsilon.$$
			By Fatou's lemma we have that
			\begin{eqnarray*}
				L&:=&\liminf_{\varepsilon\to 0}\frac{1}{\varepsilon}\int_{\R^n}\Big| 1-\left(\frac{1+2\varepsilon}{1-2\varepsilon}\right)^\frac{n}{4} e^{ -\varepsilon |x|^2} \Big| \mathrm{d}\mu (x)\\&\geq& \int_{\R^n}\liminf_{\varepsilon\to 0}\frac{1}{\varepsilon}\Big| 1-\left(\frac{1+2\varepsilon}{1-2\varepsilon}\right)^\frac{n}{4} e^{ -\varepsilon |x|^2} \Big| \mathrm{d}\mu (x)\\&=&\int_{\R^n}\left| -|x|^2+n \right| \mathrm{d}\mu (x)
			\end{eqnarray*}
			which is strictly positive and finite, proving  \eqref{-sharp-G-stab-HC} for $x_0=0$.

			\textit{Case 2}: $x_0\neq 0$. When $\varepsilon\to 0$, the left hand side of \eqref{-sharp-G-stab-HC}  becomes $$L:=\int_{\R^n}\Big| e^{\langle x,x_0\rangle -\frac{|x_0|^2}{2}}-1\Big|\mathrm{d}\mu (x),$$ which is finite and non-zero, proving  \eqref{-sharp-G-stab-HC} also for $x_0\neq 0$. 
		\end{proof}
		
		Similarly to Theorem \ref{Theorem2}, we can state the following stability results: 
%		\textcolor{blue}{ I have the same comments as at the statement of the previous theorem. I do not think that the growth condition is necessary. I believe that we should just stick to the minimal condition saying that $e^g \in L^{\alpha}(\mathbb R^n, d\mu)$.}  \textcolor{brown}{MODIFIED!}
%			

			\begin{theorem} \label{G-Theorem2}
				Let $n \geq 1$. Then there exists a constant $ C(n) >0$ with the following property. Given any $t>0$, $\alpha>0$, and any concave function $g:\mathbb R^n\to \mathbb R$ with $e^g \in L^{\alpha}(\R^n, \mathrm{d}\mu)$ and 
				$
				\delta^{\sf GHC}_{\alpha,t}(g)  \ll 1,
				$  there exists $x_0 \in \mathbb{R}^n$ such that
				\begin{equation}\label{G-new-stability-Boroczky-De}
					\int_{\R^n}\Big|k e^{\langle x,x_0\rangle }-a^{-\frac{\alpha}{\alpha+t}} e^{\alpha g(x)} \Big| \mathrm{d}\mu (x)  \leq C(n) \cdot  \left( \frac{\delta^{\sf GHC}_{\alpha,t}(g)}{\tau}\right)^\frac{1}{19} ,
				\end{equation}
				where $k>0$ and
				$
				a>0
				$ 
				are from \eqref{G-theta_and_a_definition} and $\tau=\frac{\min(\alpha,t)}{\alpha+t}$.
				
				In addition to the above assumptions, if $g:\mathbb R^n\to \mathbb R$ is radially symmetric, then 
				\begin{equation}\label{G-new-stability-Figalli-Ramos-HC}
					\int_{\R^n}\Big|1-a^{-\frac{\alpha}{\alpha+t}} e^{\alpha g(x)} \Big| \mathrm{d}\mu (x)  \leq C(n) \cdot  \left( \frac{\delta^{\sf GHC}_{\alpha,t}(g)}{\tau}\right)^\frac{1}{2} ,
				\end{equation}
				and the exponent  $\frac{1}{2}$ of the latter term in \eqref{new-stability-Figalli-Ramos-HC} is sharp.
			\end{theorem}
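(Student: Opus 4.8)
The plan is to mirror the deduction of Theorem~\ref{Theorem2} from Theorem~\ref{main-theorem-HC}, now in the Gaussian framework: take the Pr\'ekopa--Leindler configuration already built in the proof of Theorem~\ref{theorem-G-HC-1} and feed it into the sharper stability estimates of B\"or\"oczky--De (Theorem~\ref{Theorem_Boroczky-De}) and Figalli--Ramos (Theorem~\ref{Theorem_Figalli_Ramos}). First I would fix the functions $u,v,w:\R^n\to\R_+$ from \eqref{G-definition-u-v-w} with $\lambda=\frac{\alpha}{\alpha+t}\in(0,1)$, and recall from the proof of Theorem~\ref{theorem-G-HC-1} that these satisfy the Pr\'ekopa--Leindler hypothesis \eqref{PL-feltetel} and, by \eqref{G-HC-D} with $\varepsilon:=\delta^{\sf GHC}_{\alpha,t}(g)$, the deficit identity \eqref{PL-epsilon}. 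The only new structural point is that, since $g$ is concave, $w(x)=(2\pi)^{-n/2}e^{\alpha g(x)-|x|^2/2}$ is log-concave, which is exactly the hypothesis under which Theorems~\ref{Theorem_Boroczky-De} and~\ref{Theorem_Figalli_Ramos} apply.

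For \eqref{G-new-stability-Boroczky-De} I would apply Theorem~\ref{Theorem_Boroczky-De} with $\tau=\min(\lambda,1-\lambda)=\frac{\min(\alpha,t)}{\alpha+t}$ (so $\tau\le\lambda\le1-\tau$), obtaining $x_0\in\R^n$ with $\int_{\R^n}|v(x-x_0)-a^{-\lambda}w(x)|\,\mathrm{d}x\le c(n)\big(\varepsilon/\tau\big)^{1/19}\int_{\R^n}v\,\mathrm{d}x$, where $a=\int_{\R^n}u/\int_{\R^n}v=\int_{\R^n}e^{(\alpha+t)\mathbf{Q}_tg}\,\mathrm{d}\mu$. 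Since $\int_{\R^n}v\,\mathrm{d}x=1$, and $v(x-x_0)=e^{-|x_0|^2/2}e^{\langle x,x_0\rangle}\cdot(2\pi)^{-n/2}e^{-|x|^2/2}$ while $a^{-\lambda}w(x)=a^{-\lambda}e^{\alpha g(x)}\cdot(2\pi)^{-n/2}e^{-|x|^2/2}$, dividing the integrand by the Gaussian density converts this estimate into \eqref{G-new-stability-Boroczky-De} with $k=e^{-|x_0|^2/2}$ and $C(n)=c(n)=c^nn^n$ by Remark~\ref{remark-constant}.

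For the radially symmetric refinement \eqref{G-new-stability-Figalli-Ramos-HC}, I would first note that $\mathbf{Q}_t$ is built from the $O(n)$-invariant kernel $|x-y|^2/(2t)$, so the argument in the proof of Theorem~\ref{Theorem2} shows that $\mathbf{Q}_tg$ is radially symmetric whenever $g$ is; hence $u,v,w$ from \eqref{G-definition-u-v-w} are all radially symmetric, and $w$ is still log-concave. Theorem~\ref{Theorem_Figalli_Ramos} then produces a radially symmetric log-concave $h$ with $\int_{\R^n}|v-a^{-\lambda}h|\le C(n)(\varepsilon/\tau)^{1/2}\int_{\R^n}v$ and $\int_{\R^n}|w-h|\le C(n)(\varepsilon/\tau)^{1/2}\int_{\R^n}w$; combining these by the triangle inequality and using $\int_{\R^n}v=1$ together with $a^{-\lambda}\int_{\R^n}w=1+\varepsilon$ (from \eqref{PL-epsilon}) yields $\int_{\R^n}|v-a^{-\lambda}w|\le(2+\varepsilon)C(n)(\varepsilon/\tau)^{1/2}\le3C(n)(\varepsilon/\tau)^{1/2}$. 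Because Theorem~\ref{Theorem_Figalli_Ramos} involves no translation, dividing the integrand by the Gaussian density gives exactly \eqref{G-new-stability-Figalli-Ramos-HC}, with the factor $ke^{\langle x,x_0\rangle}$ replaced by $1$. Sharpness of the exponent $1/2$ then follows directly from Step~2, Case~1 of the proof of Theorem~\ref{theorem-G-HC-1}: the test function $g_\varepsilon(x)=-\varepsilon|x|^2$ is concave and radially symmetric, satisfies $\delta^{\sf GHC}_{1,1}(g_\varepsilon)=n\varepsilon^2+o(\varepsilon^2)$, while the left-hand side of \eqref{G-new-stability-Figalli-Ramos-HC} (with $x_0=0$) is bounded below by a positive multiple of $\varepsilon$ via Fatou's lemma.

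The bulk of the work --- building $u,v,w$, verifying the Pr\'ekopa--Leindler hypothesis, and computing the deficit identity --- has already been done in the proof of Theorem~\ref{theorem-G-HC-1}, so no genuinely new difficulty arises. The points needing care are the elementary rescalings that pass from the $\{u,v,w\}$-estimates to the $\mathrm{d}\mu$-estimates, the bookkeeping of the value of $\tau=\frac{\min(\alpha,t)}{\alpha+t}$, and the verification that concavity of $g$ is precisely what places $w$ in the log-concave class required by Theorems~\ref{Theorem_Boroczky-De} and~\ref{Theorem_Figalli_Ramos}; I expect this last structural check to be the only real subtlety.
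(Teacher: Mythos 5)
Your proposal is correct and follows essentially the same route as the paper, which simply states that the result is a direct adaptation of the proof of Theorem~\ref{theorem-G-HC-1} using Theorems~\ref{Theorem_Boroczky-De} and~\ref{Theorem_Figalli_Ramos} (mirroring how Theorem~\ref{Theorem2} is deduced from Theorem~\ref{main-theorem-HC}); you have correctly identified that $\lambda=\alpha/(\alpha+t)$ gives $\min(\lambda,1-\lambda)=\min(\alpha,t)/(\alpha+t)$, that concavity of $g$ makes $w$ log-concave, that $\int v\,\mathrm{d}x=1$ and $a^{-\lambda}\int w=1+\varepsilon$ reduce the Figalli--Ramos bound to $3C(n)(\varepsilon/\tau)^{1/2}$, and that sharpness follows from the concave, radially symmetric test function $g_\varepsilon(x)=-\varepsilon|x|^2$ already analysed in Step~2 of the proof of Theorem~\ref{theorem-G-HC-1}.
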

			\begin{proof} It is a simple adaptation of the proof of Theorem  \ref{theorem-G-HC-1}, by using Theorems \ref{Theorem_Boroczky-De} and  \ref{Theorem_Figalli_Ramos}, respectively. For the sharpness of the exponent  $\frac{1}{2}$ we recall that the test-function  $g_\varepsilon(x)=-\varepsilon |x|^2,\ x\in \mathbb R^n,$ is both concave and radially symmetric, thus it belongs to the admissible set of functions. 
			\end{proof}
			
			\subsection{Stability in Gaussian logarithmic Sobolev inequality.}

			When $p=2$, the $L^2$-Euclidean logarithmic Sobolev inequality \eqref{LSI_ineq} is equivalent to the \textit{Gaussian logarithmic Sobolev inequality}, i.e., for every $f\in W^{1,2}(\mathbb R^n,\mathrm{d} \mu)\setminus\{0\}$  one has
			\begin{equation}\label{GLSI}
				\Ent_{\mu}(f^2)\leq 2 \int_{\mathbb R^n} |\nabla f|^2\mathrm{d} \mu,
			\end{equation}
			where 
			$\mathrm{d} \mu(x)=(2\pi)^{-n/2}e^{-|x|^2/2}\mathrm{d} x$
			is the Gaussian measure and 
			$$\Ent_{\mu}(f^2) = \int_{\R^n} f^2 \log f^2 \, \mathrm{d} \mu - \int_{\R^n} f^2 \mathrm{d} \mu \cdot\log \int_{\R^n} f^2 \mathrm{d} \mu.$$
			Moreover,  due to  Carlen \cite{Carlen}, equality holds in \eqref{GLSI} if and only if for some $c\in \mathbb R$ and $x_0\in \mathbb R^n,$ $$f(x)=c e^{\langle x, x_0\rangle},\ x\in \mathbb R^n.$$

			 To establish our result,
			% we introduce the set
			%	\begin{equation} \label{G-Def_Wp+} 
				%	W_+^{1,p}(\mathbb R^n,\mathrm{d} \mu):= \{ f\in W^{1,p}(\mathbb R^n,\mathrm{d} \mu): f>0, \exists \ C_1\in \mathbb R,C_2>0\ \text{such that}\ |\log f(x)|\leq C_1+ C_2|x|^{p'}  \text{ for  a.e.}  \ x \in \R^n  \}   
				%\end{equation}
				we introduce the \textit{Gaussian logarithmic Sobolev deficit} for $f\in W^{1,2}(\mathbb R^n,\mathrm{d} \mu)\setminus\{0\}$ as
				\begin{equation} \label{GLSI_deficit}
					\ds\delta^{\sf GLSI}(f) =\frac{\ds 2 \int_{\mathbb R^n} |\nabla f|^2\mathrm{d} \mu-\Ent_{\mu}(f^2)}{\ds\int_{\R^n} f^2 \, \mathrm{d}\mu}  \geq 0.
				\end{equation}

			The stability in the Gaussian logarithmic Sobolev inequality reads as follows:

				\begin{theorem}\label{Gaussian-Theorem-LSI-main}
					Let $n \geq 1$. 	Then there exists a constant $ C(n) >0$ with the following property. Given any  log-concave function $f\in W^{1,2}(\mathbb R^n,\mathrm{d} \mu)$ satisfying the growth 
					condition  
					\begin{equation}\label{two-sided}
						f(x)\geq C_1e^{-C_2|x|^2},\ x\in \mathbb R^n,
					\end{equation}
					for some $C_1,C_2>0$, then there exists $x_0 \in \mathbb{R}^n$ such that
					\begin{equation}\label{rG-new-stability-Boroczky-De}
						\int_{\R^n}\Big| e^{\langle x,x_0\rangle -\frac{|x_0|^2}{2}}- \frac{f^2(x)}{\|f\|^2_{2,\mu}} \Big| \mathrm{d}\mu (x)  \leq C(n) \cdot   \delta^{\sf GLSI}(f)^\frac{1}{19}.
					\end{equation}
					
					In addition, if $f$ is radially symmetric, then  
					\begin{equation}\label{Gaussian-lsi-estimate-Schwarz}
						\int_{\R^n}\Big| 1- \frac{f^2(x)}{\|f\|^2_{2,\mu}} \Big| \mathrm{d}\mu (x)  \leq C(n) \cdot   \delta^{\sf GLSI}(f)^\frac{1}{2},
					\end{equation}
					and the exponent  $\frac{1}{2}$  in \eqref{Gaussian-lsi-estimate-Schwarz} is sharp.
				\end{theorem}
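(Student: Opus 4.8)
The idea is to carry out the two implications of the proof of Theorem~\ref{Theorem-LSI-main} in the Gaussian setting, with Theorem~\ref{G-Theorem2} replacing Theorem~\ref{Theorem2}. Fix a log-concave $f\in W^{1,2}(\R^n,\mathrm d\mu)$ obeying \eqref{two-sided} and set $g:=\log f^2=2\log f$; then $f=e^{g/2}$, $g$ is concave and, being positive everywhere by \eqref{two-sided}, locally Lipschitz, $e^g\in L^1(\R^n,\mathrm d\mu)$, and \eqref{two-sided} reads $g(x)\ge c_1-c_2|x|^2$. The preliminary step is a Gaussian version of Proposition~\ref{prop_1}: for such $g$,
\[
\lim_{t\to 0^+}\frac{\delta^{\sf GHC}_{1,t}(g)}{t}=\delta^{\sf GLSI}\!\big(e^{g/2}\big),\qquad\text{so in particular}\quad\lim_{t\to 0^+}\delta^{\sf GHC}_{1,t}(g)=0.
\]
I would prove this following the proof of Proposition~\ref{prop_1} almost verbatim (with $p=p'=2$): setting $G(t):=\|e^{\mathbf Q_t g}\|_{1+t,\mu}$ one has $G(0)=\|e^g\|_{1,\mu}$ and $\delta^{\sf GHC}_{1,t}(g)=G(0)/G(t)-1$, so by L'H\^ospital's rule $\delta^{\sf GHC}_{1,t}(g)/t\to-G'(0)/G(0)$; differentiating $\log G(t)=\tfrac{1}{1+t}\log\int e^{(1+t)\mathbf Q_t g}\,\mathrm d\mu$ at $t=0$ and inserting the Hamilton--Jacobi identity $\tfrac{d}{dt}\mathbf Q_t g\big|_{t=0}=-\tfrac12|\nabla g|^2$ of Proposition~\ref{H-J-equation} (legitimate thanks to the growth bound on $g$) gives $-G'(0)/G(0)=\big(2\!\int|\nabla f|^2\,\mathrm d\mu-\Ent_\mu(f^2)\big)/\|f\|_{2,\mu}^2=\delta^{\sf GLSI}(f)$ after substituting $\nabla g=2\nabla f/f$.

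For each sufficiently small $t>0$ I would then invoke Theorem~\ref{G-Theorem2} with $\alpha:=1$ (admissible since $\delta^{\sf GHC}_{1,t}(g)\ll1$). Here $\tau=\min(1,t)/(1+t)=t/(1+t)$, so the previous step yields $\delta^{\sf GHC}_{1,t}(g)/\tau\to\delta^{\sf GLSI}(f)$, while $a(t):=\int e^{(1+t)\mathbf Q_t g}\,\mathrm d\mu\to\int e^g\,\mathrm d\mu=\|f\|_{2,\mu}^2$. Hence \eqref{G-new-stability-Boroczky-De} furnishes $x_0(t)\in\R^n$ with
\[
\int_{\R^n}\Big|e^{\langle x,x_0(t)\rangle-\frac{|x_0(t)|^2}{2}}-a(t)^{-\frac1{1+t}}e^{g(x)}\Big|\,\mathrm d\mu(x)\ \le\ C(n)\Big(\tfrac{\delta^{\sf GHC}_{1,t}(g)}{\tau}\Big)^{1/19}.
\]
Since $e^{\langle x,x_0\rangle-|x_0|^2/2}$ and $f^2/\|f\|_{2,\mu}^2$ are probability densities with respect to $\mu$, the left-hand side never exceeds $2$; consequently \eqref{rG-new-stability-Boroczky-De} holds trivially (absorbing $2$ into $C(n)$) unless $\delta^{\sf GLSI}(f)$ is below a dimensional threshold, in which case the displayed right-hand side is eventually $<1$ and therefore forces $\{x_0(t)\}$ to remain bounded as $t\to 0$ (otherwise $N(x_0(t),I)$ escapes to infinity and the $L^1(\mu)$-distance tends to a value $\ge 1$). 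Extracting $t_k\to 0$ along which $x_0(t_k)\to x_0$ and passing to the limit by dominated convergence gives \eqref{rG-new-stability-Boroczky-De}. If $f$ is radially symmetric then so are $g$ and each $\mathbf Q_t g$, and the same argument with \eqref{G-new-stability-Figalli-Ramos-HC}, which carries no translation, yields \eqref{Gaussian-lsi-estimate-Schwarz} directly in the limit, requiring no compactness.

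For the sharpness of the exponent $\tfrac12$ in \eqref{Gaussian-lsi-estimate-Schwarz} I would test with the radially symmetric, log-concave family $f_\varepsilon(x):=e^{-\varepsilon|x|^2/2}$, $0\le\varepsilon\ll 1$, which satisfies \eqref{two-sided}. Elementary Gaussian integrals (via \eqref{integral-formula}) give $\|f_\varepsilon\|_{2,\mu}^2=(1+2\varepsilon)^{-n/2}$, $\int|\nabla f_\varepsilon|^2\,\mathrm d\mu=\tfrac{n\varepsilon^2}{1+2\varepsilon}(1+2\varepsilon)^{-n/2}$ and $\Ent_\mu(f_\varepsilon^2)=(1+2\varepsilon)^{-n/2}\big(\tfrac n2\log(1+2\varepsilon)-\tfrac{n\varepsilon}{1+2\varepsilon}\big)$, whence a Taylor expansion yields $\delta^{\sf GLSI}(f_\varepsilon)=n\varepsilon^2+o(\varepsilon^2)$. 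On the other hand, by Fatou's lemma,
\[
\liminf_{\varepsilon\to 0}\ \frac1\varepsilon\int_{\R^n}\Big|1-(1+2\varepsilon)^{n/2}e^{-\varepsilon|x|^2}\Big|\,\mathrm d\mu\ \ge\ \int_{\R^n}\big||x|^2-n\big|\,\mathrm d\mu\ \in(0,\infty),
\]
so the left-hand side of \eqref{Gaussian-lsi-estimate-Schwarz} at $f=f_\varepsilon$ is $\gtrsim\varepsilon$ while $\delta^{\sf GLSI}(f_\varepsilon)^{1/q}\sim n^{1/q}\varepsilon^{2/q}=o(\varepsilon)$ for every $q<2$; hence $\tfrac12$ cannot be improved. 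The steps I expect to be the real obstacle are (i) making the Gaussian analogue of Proposition~\ref{prop_1} rigorous, i.e.\ justifying the differentiations at $t=0$ under only the one-sided growth bound on $g$ --- this is precisely where hypothesis \eqref{two-sided} enters, just as \eqref{one-sided} does in Theorem~\ref{Theorem-LSI-main} --- and (ii) the passage to the limit $t\to 0$ of the vectors $x_0(t)$ in the non-radial case, which is handled by the escape-of-mass argument sketched above; the radial estimate and its sharpness are then comparatively routine.
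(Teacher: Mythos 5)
Your proposal is correct and follows essentially the same route as the paper: reduce to a Gaussian analogue of Proposition~\ref{prop_1} via the Hamilton--Jacobi derivative from Proposition~\ref{H-J-equation}, feed it into Theorem~\ref{G-Theorem2} with $\alpha=1$ and $\beta=1+t$, pass to the limit $t\to 0^+$ using $\tau=t/(1+t)$, and prove sharpness with the family $f_\varepsilon(x)=e^{-\varepsilon|x|^2/2}$ together with Fatou's lemma. Your explicit computations ($\|f_\varepsilon\|_{2,\mu}^2$, $\Ent_\mu(f_\varepsilon^2)$, $\int|\nabla f_\varepsilon|^2\,\mathrm d\mu$, and the expansion $\delta^{\sf GLSI}(f_\varepsilon)=n\varepsilon^2+o(\varepsilon^2)$) all check out and match the paper's. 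The one place where you go beyond the paper is that the paper simply ``lets $t\to 0$'' in the inequality produced by Theorem~\ref{G-Theorem2} without remarking that the translation vector $x_0=x_0(t)$ a priori depends on $t$; you correctly identify this and supply a tightness argument (boundedness of $x_0(t)$ when $\delta^{\sf GLSI}(f)$ is small, via escape of mass of the shifted Gaussian density in $L^1(\mu)$, otherwise absorb the trivial bound $2$ into $C(n)$), then extract a convergent subsequence and pass to the limit by Fatou/dominated convergence. This plugs a small gap in the paper's write-up; the rest of your argument, including the reliance on the growth bound \eqref{two-sided} to justify the differentiation of $t\mapsto\|e^{\mathbf Q_t g}\|_{1+t,\mu}$ at $t=0$, coincides with the paper's.
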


		\begin{proof}
			We divide the proof into two steps. 
			
			\textbf{Step 1:} \textit{proof of} \eqref{rG-new-stability-Boroczky-De} \& \eqref{Gaussian-lsi-estimate-Schwarz}. The proofs are based on Theorem \ref{G-Theorem2}, letting $t\to 0$ in \eqref{G-new-stability-Boroczky-De} and \eqref{G-new-stability-Figalli-Ramos-HC}, respectively. To do this, let $f\in W^{1,2}(\mathbb R^n,\mathrm{d} \mu)$ be a log-concave function  that verifies the  growth assumption \eqref{two-sided}. Let    $g: \mathbb{R}^n \to \mathbb{R}$ be a  concave function such that $f \coloneqq e^{ g/2}$, i.e., $g = 2 \log f$; in particular, since $f\in W^{1,2}(\mathbb R^n,\mathrm{d} \mu)$, we also have that $e^g\in L^1(\mathbb R^n,\mathrm{d}\mu).$ 
			
			First, a similar result as in \eqref{limit-HC-LSI} is needed, connecting the Gaussian hypercontractivity deficit to the Gaussian logarithmic Sobolev deficit. In fact,  we have:  
			\begin{equation} \label{G-limit-HC-LSI}
				\lim_{t \rightarrow 0^+} \frac{\delta^{\sf GHC}_{1,t}(g)}{t} =    \delta^{\sf GLSI}\left(e^{ g/2}\right).
			\end{equation}
			To prove this, let us introduce the  function $$F(t) = \left\|e^{\mathbf{Q}_{t} g}\right\|_{1 +t,\mu}, \ t\geq 0.$$ 
			Note that $F(0)=\lim_{t\to 0^+}F(t)= \left\|e^g\right\|_{1,\mu}<+\infty$ and $F$ is well-defined for every $t>0$. Moreover, 
			  by the definition of the Gaussian hypercontractivity estimate \eqref{G-HC-D}, one has for every $t>0$ that
			\begin{equation}\label{deficit-hyp-lsi}
				\delta^{\sf GHC}_{1,t}(g) = \frac{
					F(0) }{F(t) } -1 .
			\end{equation}
			It is clear that 
			\begin{equation}\label{G-def-to=0}
				\lim_{t\to 0^+}\delta^{\sf GHC}_{1,t}(g)=0,
			\end{equation}
			and 
			$$\lim_{t \rightarrow 0^+} \frac{\delta^{\sf GHC}_{1,t}(g)}{t} =-  \frac{F'(0)}{F(0)}.$$
			Thanks to the  growth assumption \eqref{two-sided}, and $g$ being locally Lipschitz on $\mathbb R^n$, one can apply 
			Proposition \ref{H-J-equation}, obtaining that 
			for a.e.\ $x\in \mathbb R^n$, 
			$$
			\frac{d}{dt}{\bf Q}_tg(x)\Big|_{t=0}= -\frac{1}{2}|\nabla g(x)|^2.
			$$
			Thus, the latter relation and the chain rule  give that 
			$$
			\frac{F^{\prime}(0)}{F(0)} =\frac{\rm d}{{\rm d}t}\Big|_{t=0}\log F(t)= \frac{1}{ \left\|e^{g}\right\|_{1}}\left(\Ent_\mu\left(\mathrm{e}^{ g}\right) -2 \int_{\R^n} \left|\nabla (e^{\frac{ g}{2}})\right|^2  \mathrm{~d} \mu\right) .
			$$ 
			Therefore,  we have that
			$$
			\lim_{t \rightarrow 0^+} \frac{\delta^{\sf GHC}_{1,t}(g)}{t} =-  \frac{F'(0)}{F(0)}=\frac{1}{  \left\|e^{g}\right\|_{1} }\left(2 \int_{\R^n} \left|\nabla (e^{\frac{  g}{2}})\right|^2  \mathrm{~d} \mu(x)-\Ent_\mu\left(\mathrm{e}^{  g}\right) \right)=   \delta^{\sf GLSI}\left(e^{  g/2}\right),
			$$
			which is precisely relation \eqref{G-limit-HC-LSI}. 
			
			Since $\tau=\frac{\min(1,t)}{1+t}$, relations  \eqref{rG-new-stability-Boroczky-De} and \eqref{Gaussian-lsi-estimate-Schwarz} follow  by \eqref{G-new-stability-Boroczky-De}, \eqref{G-new-stability-Figalli-Ramos-HC} and \eqref{G-limit-HC-LSI}, as $$\lim_{t\to 0}\frac{\delta^{\sf GHC}_{1,t}(g)}{\tau}=\lim_{t\to 0}\frac{\delta^{\sf GHC}_{1,t}(g)}{t}\cdot \frac{t}{\frac{\min(1,t)}{1+t}}=\delta^{\sf GLSI}\left(e^{  g/2}\right)=\delta^{\sf GLSI}\left(f\right).$$

			\textbf{Step 2:} \textit{sharpness in}  \eqref{Gaussian-lsi-estimate-Schwarz}. The proof is similar as in Theorem \ref{theorem-G-HC-1}; for completeness, we provide the details.    For  $\varepsilon\in (0,\frac{1}{4})$, we consider the function $$f_\varepsilon(x)=e^{-\frac{\varepsilon}{2}|x|^2},\ x\in \mathbb R^n.$$
			It is clear that $f_\varepsilon$ is log-concave, radially symmetric and verifies the  growth assumption \eqref{two-sided}; moreover, similar calculations as in the proof of Theorem \ref{theorem-G-HC-1} provide
			$$\|f_\varepsilon\|_{2,\mu}=(1+2\varepsilon)^{-\frac{n}{4}},\ \  \int_{\mathbb R^n} f_\varepsilon^2 \log f_\varepsilon^2 \mathrm{~d} \mu = -n\varepsilon (1+2\varepsilon)^{-\frac{n}{2}-1} $$
			and 
			$$ \int_{\mathbb R^n} |\nabla f_\varepsilon|^2\mathrm{~d} \mu = n\varepsilon^2(1+2\varepsilon)^{-\frac{n}{2}-1}. $$
			Therefore, 
			$$\ds\delta^{\sf GLSI}(f_\varepsilon) =\frac{\ds 2 \int_{\mathbb R^n} |\nabla f_\varepsilon|^2\mathrm{d} \mu-\Ent_{\mu}(f_\varepsilon^2)}{\ds\int_{\R^n} f_\varepsilon^2 \, \mathrm{d}\mu} =n\varepsilon-\frac{n}{2}\log(1+2\varepsilon),$$
			thus 
			$$
			\delta^{\sf GLSI}(f_\varepsilon)  =  n\varepsilon^2+o(\varepsilon^2), \ \ 0<\varepsilon\ll 1.
			$$
			
			According to this estimate, the optimality of the exponent $1/2$ in \eqref{Gaussian-lsi-estimate-Schwarz} follows once we prove that there exists $C>0$ such that for every small $\varepsilon>0$, one has 	
			\begin{equation}\label{-sharp-G-stab-LSI}
				\int_{\R^n}\Big| 1-(1+2\varepsilon)^{\frac{n}{2}}e^{-{\varepsilon}|x|^2}\Big| \mathrm{d}\mu (x)  \geq C   \varepsilon.
			\end{equation}
			By Fatou's lemma we have that
			\begin{eqnarray*}
				L&:=&\liminf_{\varepsilon\to 0}\frac{1}{\varepsilon}\int_{\R^n}\Big| 1-(1+2\varepsilon)^{\frac{n}{2}}e^{-{\varepsilon}|x|^2}\Big| \mathrm{d}\mu (x)\\&\geq& \int_{\R^n}\liminf_{\varepsilon\to 0}\frac{1}{\varepsilon}\Big| 1-(1+2\varepsilon)^{\frac{n}{2}}e^{-{\varepsilon}|x|^2}\Big| \mathrm{d}\mu (x)\\&=&\int_{\R^n}\left| -|x|^2+n \right| \mathrm{d}\mu (x)
			\end{eqnarray*}
			which is strictly positive and finite, proving  \eqref{-sharp-G-stab-LSI}. 
				\end{proof}

			\begin{remark}\rm 
					We notice that while the existing stability results  in the literature for the Gaussian logarithmic Sobolev inequality    refer to $L^2$-type estimates (see the aforementioned works  \cite{BDS},  \cite{Dolbeault-et-al}, \cite{Dolbeault-et-al_short},  \cite{FathiIndreiLedoux},  \cite{FeoIndreiPosteraroRoberto}), Theorem \ref{Gaussian-Theorem-LSI-main} is slightly weaker, being established in $L^1$-norms (see the left hand sides of \eqref{rG-new-stability-Boroczky-De} and \eqref{Gaussian-lsi-estimate-Schwarz}).  
%					In addition, the two-sided growth 
%				condition \eqref{two-sided} is also needed in our argument; the left hand side control is needed (similar to \eqref{one-sided} in Theorem \ref{Theorem-LSI-main}) to connect the Gaussian hypercontractivity deficit with the  Gaussian logarithmic Sobolev deficit, while the right hand side guarantees the integrability of the terms that appear in the hypercontractivity estimates,  being used in our limiting procedure.  
			\end{remark}

	\section{Final comments}\label{section-6}
	
	\subsection{Optimal Pr\'ekopa--Leindler stability} Based on Theorems \ref{main-theorem-HC}  and \ref{Theorem2}, we believe that a general sharp hypercontractivity estimate can be established: given $n \geq 1$ and  $p>1$, then there exists a constant $ C(n,p) >0$ with the following property that for any $t>0$, $0<\alpha<\beta$, and any function $g:\mathbb R^n\to \mathbb R$ such that $e^g\in L^\alpha(\mathbb R^n)$, and
	$
	\delta^{\sf HC}_{p ,t,\alpha,\beta}(g)  \ll 1,
	$  there exists $x_0 \in \mathbb{R}^n$ such that
	$$	\int_{\R^n}\left|e^{-\theta \cdot \frac{|x|^{p^\prime}}{p^\prime}}-a^{-\frac{\alpha}{\beta}} e^{\alpha  g\left( x\right)}\right| \mathrm{d} x  \leq C(n,p) \theta^{-\frac{n}{p'}} \left({\frac{\delta^{\sf HC}_{p ,t,\alpha,\beta}(g)}{\tau}}\right)^\frac{1}{2},$$
	where  $\theta>0$ and
	$
	a>0
	$ 
	are from \eqref{theta_and_a_definition}, and $\tau=\min \left(\frac{\alpha}{\beta}, 1-\frac{\alpha}{\beta}\right)$. This result follows one we have a finer Pr\'ekopa--Leindler stability of the form: 
	
	\vspace{0.2cm}
\noindent 	{\bf Conjecture.} (Optimal Pr\'ekopa--Leindler stability) \textit{	For every $n \in \mathbb{N}$, there exists a dimensional  constant $ C(n) >0$ with the following property.
	Let $0<\lambda<1$, and $u, v, w: \mathbb{R}^n \rightarrow \mathbb{R}_{+}$ be functions such that  \eqref{PL-feltetel} and \eqref{PL-epsilon} hold for some $\varepsilon\in (0,1].$
	%		\begin{itemize}
		%			\item  $w(\lambda x + (1-\lambda)y) \geq u(x)^{\lambda} v(y)^{1-\lambda}$ ~ for all $x,y \in \R^n,$ and 
		%			\item  $ \ds \int_{\R^n} w \leq(1+\varepsilon)\left(\int_{\R^n} u\right)^{\lambda} \left(\int_{\R^n} v\right)^{1-\lambda} $ for some $\varepsilon>0$.
		%		\end{itemize}  
	Then there exist a  $($log-concave$)$ function $h: \mathbb{R}^n \rightarrow \mathbb{R}_{+}$ such that
	$$
	\begin{aligned}
		\int_{\R^n}| u(x) - a^{1-\lambda}  h\left(x\right)| \mathrm{d} x & \leq C(n)\left(\frac{\varepsilon}{\tau}\right)^{1 / 2} \int_{\mathbb{R}} u(x) \mathrm{d} x, \\
		\int_{\R^n}|v(x)- a^{-\lambda}h\left(x\right)| \mathrm{d} x & \leq C(n)\left(\frac{\varepsilon}{\tau}\right)^{1 / 2} \int_{\mathbb{R}} v(x)  \mathrm{d} x, \\
		\int_{\mathbb{R}}|w(x)-h(x)|  \mathrm{d} x & \leq C(n)\left(\frac{\varepsilon}{\tau}\right)^{1 / 2} \int_{\R^n} w(x)  \mathrm{d} x,
	\end{aligned}
	$$
	where $a = \int_{\R^n} u / \int_{\R^n} v$ and $\tau= \min(\lambda, 1-\lambda)$.} 
\vspace{0.3cm}

\noindent 	 The Conjecture would be a stronger version of the main result of  Figalli, van Hintum  and Tiba \cite{FigallivanHintumTiba_25} and its validity is indicated in Remark 1.8 of their paper. Based on our method, this would imply a sharp version for the stability of the $L^p$-logarithmic Sobolev inequality, $p>1$.
	
	\subsection{Stability in $L^1$-logarithmic Sobolev inequality}
	In Remark \ref{remark-Beckner} we established a stability result for the $L^1$-logarithmic Sobolev inequality \eqref{LSI_ineq=1} under the restricted condition that $f\in \mathcal{F}_{p_0}$ for some $p_0>1$. Instead of the condition $f\in \mathcal{F}_{p_0}$ it would be  more natural to require that $f\in {\sf BV}(\mathbb R^n)$  and  the existence of $C>0$ and a non-zero measured set $S\subset \mathbb R^n$ such that $f(x)\geq C$ for a.e.\ $x\in S.$ 	However,  our limiting argument does not allow this general assumption to derive the stability \eqref{lsi-estimate-limit} in the $L^1$-logarithmic Sobolev inequality. Therefore, it would be interesting to find a direct way  -- without using the limit  procedure $(p\to 1)$ --  to prove \eqref{lsi-estimate-limit}, by adapting in a suitable manner  Proposition \ref{H-J-equation} (formally, for $p= 1$). This problem requires a careful analysis, of the Hopf--Lax semigroup given by \eqref{Q-t-definition} in the degenerate case $p=1$.

			\appendix
			\section{}
			
			\subsection{Hamilton--Jacobi equation at the origin} 
			In this subsection we focus on the validity of the Hamilton--Jacobi equation at the origin:
			\begin{equation}\label{Bobkov-Ledoux-limit-0-1}
				\frac{d}{dt}{\bf Q}_tg(x)\Big|_{t=0}= -\frac{1}{p}|\nabla g(x)|^p,
			\end{equation}
			under mild assumptions on the function $g:\mathbb R^n\to \mathbb R$. Note that \eqref{Bobkov-Ledoux-limit-0-1} is know to be valid when $g$ is bounded, see Bobkov and Ledoux \cite[Lemma A]{BL}; however, this class of functions is not enough for our purposes, see the proof of  Theorems  \ref{Theorem-LSI-main} and  \ref{Gaussian-Theorem-LSI-main}.
			
			\begin{proposition}\label{H-J-equation}
				Let $g:\mathbb R^n\to \mathbb R$ be a locally Lipschitz function such that for some $C_1\in \mathbb R$ and  $C_2>0$, one has
				\begin{equation}\label{growth-g}
					g(x)\geq C_1-C_2|x|^{p'},\ \ x\in \mathbb R^n.
				\end{equation} 
				Then  \eqref{Bobkov-Ledoux-limit-0-1} holds for a.e.\ $x\in \mathbb R^n.$ 
			\end{proposition}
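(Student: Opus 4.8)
The plan is to establish, at every point $x\in\R^n$ where $g$ is differentiable, the two one-sided bounds
\[
\limsup_{t\to 0^+}\frac{\mathbf{Q}_tg(x)-g(x)}{t}\le -\tfrac1p|\nabla g(x)|^p
\quad\text{and}\quad
\liminf_{t\to 0^+}\frac{\mathbf{Q}_tg(x)-g(x)}{t}\ge -\tfrac1p|\nabla g(x)|^p .
\]
Since $g$ is locally Lipschitz, Rademacher's theorem guarantees this holds for a.e.\ $x$, and the two bounds together give \eqref{Bobkov-Ledoux-limit-0-1}. Throughout I would use the Legendre duality identity $\inf_{w\in\R^n}\bigl(\tfrac{|w|^{p'}}{p'}-\langle a,w\rangle\bigr)=-\tfrac{|a|^p}{p}$, valid for the conjugate exponents $p,p'$ by Young's inequality, with equality when $w$ is parallel to $a$ and $|w|=|a|^{p-1}$.

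For the upper bound I would simply insert the competitor $y=x-tw$, for arbitrary $w\in\R^n$, into the infimum defining $\mathbf{Q}_tg(x)$ in \eqref{Q-t-definition}. Since $|x-y|^{p'}=t^{p'}|w|^{p'}$, this gives $\mathbf{Q}_tg(x)\le g(x-tw)+\tfrac{t|w|^{p'}}{p'}$, hence $\tfrac{\mathbf{Q}_tg(x)-g(x)}{t}\le\tfrac{g(x-tw)-g(x)}{t}+\tfrac{|w|^{p'}}{p'}$. Letting $t\to0^+$ and using differentiability of $g$ at $x$, the first term converges to $-\langle\nabla g(x),w\rangle$; taking the infimum over $w$ and applying the duality identity yields the claimed upper bound. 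This half uses neither the growth condition \eqref{growth-g} nor local Lipschitzness beyond differentiability at $x$.

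The growth condition enters only in the lower bound, where its role is to confine the minimizers of the Hopf--Lax functional to a neighbourhood of $x$ as $t\to0^+$. First, since $g$ is continuous and, by \eqref{growth-g}, $y\mapsto g(y)+\tfrac{|x-y|^{p'}}{p't^{p'-1}}$ is coercive, its infimum is attained at some $y_t$; I would show there are $t_0=t_0(x)>0$ and $R=R(x)>0$ with $|y_t|\le R$ for all $0<t\le t_0$, by comparing the value at $y_t$ (which is $\le g(x)$, the value at $y=x$) with the lower bound $C_1+C_2|y|^{p'}$, valid once $|y|\ge 2|x|$ and $\tfrac1{p't^{p'-1}}$ exceeds a fixed multiple of $C_2$ (here one uses $|x-y|\ge|y|/2$, hence $|x-y|^{p'}\ge 2^{-p'}|y|^{p'}$, for $|y|\ge 2|x|$; note $\tfrac1{p't^{p'-1}}\to\infty$ since $p'>1$). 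Next, on $\overline{B}(0,R+1)$ the function $g$ is Lipschitz with some constant $L$, so from $\tfrac{|x-y_t|^{p'}}{p't^{p'-1}}\le g(x)-g(y_t)\le L|x-y_t|$ one gets $|x-y_t|\le(Lp')^{p-1}\,t$; writing $y_t=x-tw_t$, the vectors $w_t$ are then bounded uniformly in $t\le t_0$. Substituting back, $\mathbf{Q}_tg(x)=g(x-tw_t)+\tfrac{t|w_t|^{p'}}{p'}$, so
\[
\frac{\mathbf{Q}_tg(x)-g(x)}{t}=\frac{g(x-tw_t)-g(x)}{t}+\frac{|w_t|^{p'}}{p'}
=-\langle\nabla g(x),w_t\rangle+\frac{|w_t|^{p'}}{p'}+o(1)\ge-\frac1p|\nabla g(x)|^p+o(1),
\]
where the expansion of $g(x-tw_t)$ is legitimate because $tw_t\to0$ with $|w_t|$ bounded, and the final inequality is the duality identity once more; passing to the liminf gives the lower bound.

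The main obstacle is precisely this localization step: one must extract from the mere polynomial lower bound \eqref{growth-g} a uniform-in-$t$ control $|y_t|\le R(x)$ on the minimizers, and then bootstrap through the local Lipschitz constant to obtain the quantitative rate $|x-y_t|=O(t)$, which is what makes the Taylor expansion at $x$ usable. Everything else is routine; in particular, the degenerate case $\nabla g(x)=0$ is covered by the same formulas, both bounds then reducing to $0$.
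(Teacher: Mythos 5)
Your proposal is correct and follows the same global strategy as the paper: establish the one-sided bound by competitor substitution, then use the growth condition \eqref{growth-g} to confine the Hopf--Lax minimizers and close the lower bound via Young's inequality (equivalently, Legendre duality). There are two mild differences of detail worth pointing out. For the upper bound, you give a short self-contained argument by plugging $y=x-tw$ into the infimum and infimizing over $w$, whereas the paper simply cites the Bobkov--Ledoux lemma; both are fine, and yours is arguably more transparent. For the lower bound, after confining the minimizers $y_t$ to a compact set, you go one step further and derive the quantitative rate $|x-y_t|\le (Lp')^{p-1}t$ from the local Lipschitz constant; this makes the scaled variable $w_t=(x-y_t)/t$ uniformly bounded, which is what legitimises the Taylor expansion $g(x-tw_t)=g(x)-t\langle\nabla g(x),w_t\rangle+o(t)$. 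The paper avoids this extra bootstrap by writing the difference quotient as
\[
\frac{\mathbf{Q}_{t}g(x)-g(x)}{t}=\frac{g(y_t)-g(x)}{|y_t-x|}\cdot\frac{|y_t-x|}{t}+\frac{1}{p'}\Bigl(\frac{|y_t-x|}{t}\Bigr)^{p'}
\]
and then using that $y_t\to x$ (no rate needed) to bound $\frac{g(y_t)-g(x)}{|y_t-x|}\ge-|\nabla g(x)|-\varepsilon$, after which Young's inequality yields a lower bound uniform in the ratio $|y_t-x|/t$. So your route is a harmless strengthening --- it even gives sharper localization of the minimizers --- while the paper's is slightly more economical; both are valid.
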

			\begin{proof} By Bobkov and Ledoux \cite[Lemma A, p.\ 378]{BL}, we have for a.e.\ $x\in \mathbb R^n$ that
				$$	\frac{d}{dt}{\bf Q}_tg(x)\Big|_{t=0}\leq -\frac{1}{p}|\nabla g(x)|^p;$$ 
				note that this inequality does not require any restriction on $g$ (unless the a.e.\ differentiability, which follows by the locally Lipschitz character of $g$).

				For the opposite inequality, we  follow  the idea from  Balogh,   Krist\'aly and Tripaldi \cite[Proposition 4.1/(iv)]{BKT}. 
				Let $t_0:=(2p'C_2)^\frac{1}{1-p'}$, where $C_2>0$ is from \eqref{growth-g}. Then, for every compact set $K\subset \mathbb R^n$, one can prove that $(x,t)\mapsto {\bf Q}_tg(x)$ is 
				well-defined, uniformly bounded from below, and
				Lipschitz continuous on the set $[0,t_0]\times K$. Indeed, for $t=0$, the function $ {\bf Q}_ 0 g=g$ is Lipschitz continuous on $K$. Now, if $(t,x)\in (0,t_0]\times K$, one has that 
				\begin{eqnarray*}
					{\bf Q}_tg(x)&=&\inf _{y \in \R^n}\left\{g(y)+\frac{|x-y|^{p^{\prime}}}{p^{\prime} t^{p^{\prime}-1}}\right\}\geq C_1+\inf _{y \in \R^n}\left\{-C_2|y|^{p'}+\frac{|x-y|^{p^{\prime}}}{p^{\prime} t^{p^{\prime}-1}}\right\}\\&\geq & C_1+C_2\inf _{y \in \R^n}\left\{-|y|^{p'}+2|x-y|^{p^{\prime}}\right\}>-\infty,
				\end{eqnarray*}
				uniformly on $K$; here, we used the coercivity property  $\lim_{|y|\to \infty}(-|y|^{p'}+2|x-y|^{p^{\prime}})=+\infty$. In addition, it turns out that there exists $R_K>0$ such that for every  $(t,x)\in (0,t_0]\times K$, one has
				\begin{equation}\label{Q-t-bounded}
					{\bf Q}_tg(x)=\min _{|y| \leq R_K}\left\{g(y)+\frac{|x-y|^{p^{\prime}}}{p^{\prime} t^{p^{\prime}-1}}\right\}.
				\end{equation}

				Let us fix $x\in \mathbb R^n$ where $g$ is differentiable. By \eqref{Q-t-bounded}, there exists $R_x>0$ such that if $\{t_n\}_{n\mathbb N}\subset (0,t_0/2)$ is a sequence converging to $0,$ then one can find $y_n\in \mathbb R^n$ with  $|y_n|\leq R_x$ and 
				$${\bf Q}_{t_n}g(x)=g(y_n)+\frac{|x-y_n|^{p^{\prime}}}{p^{\prime} t_n^{p^{\prime}-1}}.$$
				It is clear that $\{y_n\}$ converges to $x.$ Indeed, if we assume  for a  subsequence of $\{y_n\}$, which is still denoted by $\{y_n\}$, that $|y_n-x|\geq l>0$ for every $n\in \mathbb N$, then the latter relation together with the fact that $\max_{|y|\leq R_x} g(y)<+\infty$, $t_n\to 0^+$ and  ${\bf Q}_{t_n}g(x)\leq g(x)$, provides a contradiction. 
				
				Since  $x\in \mathbb R^n$ is a differentiable point of $g$, for every $\varepsilon>0$, there exists $n_\varepsilon\in \mathbb N$ such that for every $n\geq n_\varepsilon$ one has
				$$\frac{g(y_n)-g(x)}{|y_n-x|}\geq -|\nabla g(x)|-\varepsilon.$$ 
				Therefore, for every $n\geq n_\varepsilon$, by  Young's inequality one has
				\begin{eqnarray*}
					\frac{{\bf Q}_{t_n}g(x)-g(x)}{t_n}&=&\frac{g(y_n)-g(x)}{t_n}+\frac{|x-y_n|^{p^{\prime}}}{p^{\prime} t_n^{p^{\prime}}}=\frac{g(y_n)-g(x)}{|y_n-x|}\cdot \frac{|y_n-x|}{t_n}+\frac{1}{p'}\left(\frac{|y_n-x|}{ t_n}\right)^{p'}\\&\geq & -(|\nabla g(x)|+\varepsilon)\cdot \frac{|y_n-x|}{t_n}+\frac{1}{p'}\left(\frac{|y_n-x|}{ t_n}\right)^{p'}\\&\geq &-\frac{1}{p}(|\nabla g(x)|+\varepsilon)^p,
				\end{eqnarray*}
				which implies that 
				$$	\frac{d}{dt}{\bf Q}_tg(x)\Big|_{t=0}\geq -\frac{1}{p}|\nabla g(x)|^p,$$
				concluding the proof of \eqref{Bobkov-Ledoux-limit-0-1}. 
			\end{proof}

			\subsection{A property of the Trigamma function }\label{subsection-2-2} We shall prove a property of the Trigamma function used in the proof of the sharpness of Theorem \ref{Theorem-LSI-main}.
			Recall the definition of the Gamma function $\Gamma:(0,\infty)\to \mathbb R$ by
			$$\Gamma(x)=\int_0^\infty t^{x-1}e^{-t}{\rm d}t,\ \ x>0, $$
			and the Digamma and Trigamma functions are defined as 
			$$\Psi(x)=\frac{d}{dx}\log \Gamma(x)=\frac{\Gamma'(x)}{\Gamma(x)}\ \ {\rm and}\ \ \Psi'(x),\ \ x>0,$$
			respectively.  
			
			\begin{proposition}\label{trigamma} For every $x,q>0$, one has that 
				$$
				x^2+x-q-x^2\left(x-q\right)\Psi'(x)>0. 
				$$
			\end{proposition}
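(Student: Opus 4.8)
The plan is to fix $x>0$ and treat the left-hand side as a function of $q$, namely $\varphi_x(q) = x^2 + x - q - x^2(x-q)\Psi'(x) = x^2 + x - x^3\Psi'(x) + q\bigl(x^2\Psi'(x) - 1\bigr)$. This is affine in $q$, so on the half-line $q>0$ it is monotone and its infimum over $q\ge 0$ is attained either at $q=0$ or in the limit $q\to\infty$, depending on the sign of the slope $x^2\Psi'(x)-1$. The key classical fact I would invoke is the sharp two-sided bound for the Trigamma function
\[
\frac{1}{x} + \frac{1}{2x^2} < \Psi'(x) < \frac{1}{x} + \frac{1}{x^2}, \qquad x>0,
\]
(equivalently $\Psi'(x) = \sum_{k\ge 0}(x+k)^{-2}$ together with integral-comparison estimates), from which $x^2\Psi'(x) > x + \tfrac12 > 1$ for all $x>0$; hence the slope in $q$ is \emph{strictly positive}. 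Therefore $\varphi_x$ is strictly increasing in $q$, and it suffices to check the endpoint value $\varphi_x(0) = x^2 + x - x^3\Psi'(x) \ge 0$, and in fact that $\varphi_x(0)>0$ would already finish the case $q>0$ since then $\varphi_x(q)>\varphi_x(0)\ge 0$; but one should be slightly careful, as $\varphi_x(0)$ might only be nonnegative, so the strict inequality for $q>0$ still comes from strict monotonicity.

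The remaining point is the bound $x^3\Psi'(x) < x^2 + x$, i.e. $\Psi'(x) < \tfrac1x + \tfrac1{x^2}$, which is exactly the upper estimate quoted above. I would prove this upper bound directly from the series representation: $\Psi'(x) = \sum_{k=0}^\infty \frac{1}{(x+k)^2} < \frac{1}{x^2} + \int_0^\infty \frac{dt}{(x+t)^2} = \frac{1}{x^2} + \frac{1}{x}$, where the comparison $\sum_{k\ge 1}(x+k)^{-2} < \int_0^\infty (x+t)^{-2}\,dt$ uses that $t\mapsto (x+t)^{-2}$ is decreasing. This gives $\varphi_x(0) = x^2+x-x^3\Psi'(x) > 0$, and combined with the strict monotonicity in $q$ from the previous paragraph, we conclude $\varphi_x(q) > \varphi_x(0) > 0$ for every $q>0$, which is the claim.

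The main (and only) obstacle is purely organizational: making sure the two Trigamma estimates used — the lower bound $\Psi'(x) > \tfrac1x$ (to get a positive slope in $q$; in fact $x^2\Psi'(x)>x>1$ already suffices when $x>1$, but for all $x>0$ one needs $x^2\Psi'(x)>1$, which follows from $\Psi'(x)>x^{-2}$, the trivial first term of the series) and the upper bound $\Psi'(x)<\tfrac1x+\tfrac1{x^2}$ — are stated with the right strictness and cover the full range $x>0$. Both follow cleanly from the monotone series $\Psi'(x)=\sum_{k\ge0}(x+k)^{-2}$ and an integral comparison, so no delicate estimate is actually required; one could alternatively cite a standard reference (e.g. Abramowitz--Stegun, or Alzer's inequalities for the Trigamma function) for these bounds. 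An even shorter route, if one prefers to avoid the series, is to note that $h(x) := x^2+x-x^3\Psi'(x)$ satisfies $h(x)\to 0$ as $x\to\infty$ (using $\Psi'(x)=\tfrac1x+\tfrac1{2x^2}+O(x^{-3})$) and $\lim_{x\to0^+}h(x)=+\infty$ (since $\Psi'(x)\sim x^{-2}$ so $x^3\Psi'(x)\sim x\to0$ while $x^2+x\to0$ too — so this endpoint needs the more precise $\Psi'(x)=x^{-2}+\Psi'(x+1)$, giving $h(x)=x^2+x-x-x^3\Psi'(x+1)\to0$); this makes the endpoint analysis slightly fussier, so I would favor the direct series-plus-integral argument above.
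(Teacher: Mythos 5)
Your proposal is correct and follows essentially the same strategy as the paper: fix $x$, observe that the expression is affine in $q$ with slope $x^2\Psi'(x)-1$, show this slope is strictly positive, and reduce the claim to the endpoint bound $x^2+x-x^3\Psi'(x)\ge 0$, i.e.\ $\Psi'(x)\le \tfrac1x+\tfrac1{x^2}$. For the slope the paper writes $x^2\Psi'(x)-1=\sum_{k\ge 1}\frac{x^2}{(x+k)^2}>0$ via the series, which is exactly your observation that $\Psi'(x)>x^{-2}$; note that your earlier attempt via $x^2\Psi'(x)>x+\tfrac12>1$ actually fails for $x\le\tfrac12$, but you catch and repair this yourself, so no harm done. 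The genuine (if small) difference is in the endpoint estimate: the paper cites the Ismail--Muldoon completely monotonic characterization to get $\Psi'(x)\le\tfrac1x+\tfrac1{x^2}$, whereas you prove the strict inequality directly from the series by the integral comparison $\sum_{k\ge1}(x+k)^{-2}<\int_0^\infty(x+t)^{-2}\,\mathrm{d}t=\tfrac1x$. Your route is more elementary and self-contained, and it even sharpens the endpoint value to strict positivity, whereas the paper relies on the $q$-monotonicity to convert $\ge 0$ at the endpoint into $>0$ for $q>0$; both are fine.
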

			\begin{proof}
				For $x,q>0$, we define the function $h(x,q) \coloneqq x^2+x-q-x^2\left(x-q\right)\Psi'(x)$. Hence,
				$$\frac{\partial}{\partial q}h(x,q)=-1+x^2\Psi(1,x).$$
				By the series representation of the Trigamma function $$\Psi'(x)=\sum_{k=0}^\infty \frac{1}{(k+x)^2}$$ and the latter relation, for every $x>0$, it follows that
				$$\frac{\partial}{\partial q}h(x,q)=\sum_{k=1}^\infty \frac{x^2}{(k+x)^2}>0,$$
				i.e., the function $q\mapsto h(x,q)$ is increasing on $(0,\infty)$. In particular, for every $x,q>0$, we have that 
				\begin{equation}\label{h-estime}
					h(x,q)>\lim_{q\to 0^+}h(x,q)=x^2+x-x^3\Psi'(x).
				\end{equation}
				On the other hand, by Ismail and Muldoon \cite[Theorem 2.1]{Ismail-Muldoon}, we know that the function 
				$$j(x):= \Psi(x)-\log x+\frac{\alpha}{x},\ \ x>0,$$ is completely monotonic, i.e., $(-1)^k\frac{d^k}{dx^k}j(x)\geq 0$  for every $x\in (0,\infty)$ and $k \in \mathbb N$,  if and only if $\alpha\geq 1$.  In particular, for $k=1$ and $\alpha=1$, it follows that 
				$\Psi'(x)=\frac{d}{dx}\Psi(x)\leq \frac{1}{x}+\frac{1}{x^2}$ for every $x>0.$ Therefore, by \eqref{h-estime}, one has that $h(x,q)>0$ for every $x,q>0$, which ends the proof. 
			\end{proof}

		\end{document}